\newtheorem{theorem}{Theorem}
\newtheorem{lemma}{Lemma}
\newtheorem{corollary}{Corollary}
\newtheorem{definition}{Definition}
\newtheorem{proposition}{Proposition}
\theoremstyle{definition}
\newtheorem{example}{Example}
\theoremstyle{remark}
\newtheorem{remark}{Remark}
\newcommand{\R}{\mathbb R}
\newcommand{\bx}{\bar{x}}
\newcommand{\by}{\bar{y}}
\newcommand{\tx}{\tilde{x}}
\newcommand{\ty}{\tilde{y}}
\newcommand{\KX}{K_{\mathbb X}}
\newcommand{\KY}{K_{\mathbb Y}}
\newcommand{\eX}{e_{\mathbb X}}
\newcommand{\eY}{e_{\mathbb Y}}
\newcommand{\wx}{\widetilde{x}}
\newcommand{\ipopt}{\texttt{IPOPT}}
\def \n1{\texttt{np1}}
\def \np2{\texttt{np2}}
\newcommand{\inner}[2]{\langle#1,#2\rangle}
\newcommand{\nn}{\nonumber}
\newcommand{\ind}{i}
\newcommand{\norm}[1]{\textstyle{\Vert} #1 \textstyle{\Vert}}
\newcommand{\pclarke}{\partial^{\tt C}}
\newcommand{\DX}{\Delta_{\mathbb X}}
\begin{document}

\title{Computing  critical angles between two convex cones }

\author{Welington de Oliveira$^\ast$ \and Valentina Sessa\footnote{ Mines Paris, Universit\'e PSL, Centre de Math\'ematiques Appliqu\'ees (CMA),  Sophia Antipolis, France.\\ The research of Valentina Sessa and Welington  de Oliveira benefited from the support of the 
Gaspard-Monge Program for Optimization and Operations Research (PGMO) project
“SOLEM - Scalable Optimization for Learning and Energy Management”.} \and David Sossa\footnote{Universidad de O'Higgins, Instituto de Ciencias de la Ingenier\'ia, Av.\,Libertador Bernardo O'Higgins 611, Rancagua, Chile (e-mail: david.sossa@uoh.cl). Partially supported by  FONDECYT (Chile)  through grant 11220268.}    } 

\date{\today}

\maketitle

\bigskip
\bigskip

\begin{quote}
 {\small \textbf{Abstract}. 

 This paper addresses the numerical computation of critical angles between two convex cones in finite-dimensional Euclidean spaces. We present a novel approach to computing these critical angles by reducing the problem to finding stationary points of a fractional programming problem. To efficiently compute these stationary points, we introduce a partial linearization-like algorithm that offers significant computational advantages. Solving a sequence of strictly convex subproblems with straightforward solutions in several settings  gives the proposed algorithm high computational efficiency while delivering reliable results: our theoretical analysis demonstrates that the proposed algorithm asymptotically computes critical angles. Numerical experiments validate the efficiency of our approach, even when dealing with problems of relatively large dimensions: only a few seconds are necessary to compute critical angles between different types of cones in spaces of dimension 1000. 
\bigskip
\bigskip
\\

{\it Mathematics Subject Classification}:  52A40; 90C26; 90C32; 90C46; 17C99
\\

{\it Keywords}: Critical angle; Convex cone; Fractional Programming; Optimality conditions; Euclidean Jordan algebra. 
}\end{quote}
\bigskip
\bigskip

\section{Introduction}\label{se:intro}
In this work, we are concerned with the problem of computing critical (principal) angles  between two closed convex cones $P\neq \emptyset$ and $Q\neq \emptyset$ in Euclidean spaces. This class of problems, investigated from the theoretical point of view in \cite{SeSo1} and \cite{SeSo2},
finds applications in image classifications and other domains \cite{Sogi}.
In the more straightforward case where $P$ and $Q$ are linear subspaces, the concept of critical angles 
has applications in statistics and numerical linear algebra. 
Such a concept has been abundantly studied in the literature, both from a theoretical and computational
point of view \cite{Miao}. However, to the best of our knowledge, no numerical procedure exists to compute critical angles when $P$ and $Q$ are not linear subspaces but convex cones  in medium or high dimensional spaces. This work contributes to fill the gap by proposing an implementable algorithm for computing critical angles between two convex cones. Furthermore, our framework is general enough to cover several classes of cones, such as polyhedral, ellipsoidal,  Loewnerian cones, and others. Indeed, our methodology applies to the vast family of LISC cones, which are   closed convex cones that can be expressed as linear images of the symmetric cones of certain Euclidean Jordan algebras, plus some mild assumptions (see Definition\,\ref{def:LISC} below).

By definition, the maximal angle between $P$ and $Q$, in a Euclidean space $\mathbb V$ equipped with an inner product $\langle\cdot,\cdot\rangle$,  is the maximum value of $\arccos\,\langle u,v\rangle$ on $(P\cap S_{\mathbb V})\times (Q\cap S_{\mathbb V})$, where $S_{\mathbb V}$ denotes the unit sphere of $\mathbb V$. Critical angles between $P$ and $Q$ are angles formed by vectors $u\in P$ and $v\in Q$ satisfying some complementarity conditions (see Definition\,\ref{def:critical}), which are necessary conditions for $u$ and $v$ to achieve the maximal angle between $P$ and $Q$.

The study of critical angles for the case $P=Q$ was first carried out by Iusem and Seeger \cite{IS1,IS2,IS3,IS4}. Later, Seeger and Sossa expanded this study to two different cones \cite{SeSo1,SeSo2}. See also the recent contributions of Orlitzky\,\cite{Or}  and Bauschke et al.\,\cite{Bau}.
Concerning the computation of critical angles, some methods were given in \cite{SeSo2} for specially structured cones such as revolution cones, ellipsoidal, and cones of matrices. These methods are either applicable to specific instances of cones or limited to problems on spaces of small dimensions. In \cite{SeSo1}, an exhaustive method was developed for computing all critical angles between polyhedral cones. Basically, one must select subsets of generators of the cones, and solve a generalized eigenvalue problem for each selection. This has a combinatorial nature and its numerical implementation is only possible for polyhedral cones with few generators.

 In this work, we show that computing critical angles between two convex cones reduces to the problem of finding stationary points of a fractional programming (FP) problem, that is, the optimization of the ratio of two continuous functions over a nonempty, closed convex set~\cite{FrSc} (see \eqref{maxfrac} for the precise definition).
When specialized to the family of LISC cones,  the feasible set of our FP is the Cartesian product of spectraplexes (sets analogous to the standard simplex) of the involved Euclidean Jordan algebras. Thus, from the numerical point of view, our FP is easier to handle than the original maximal angle problem formulation, whose feasible set has spherical conditions. 
Notably, most of the theoretical and algorithmic works in fractional programming are applied to the convex case, i.e., convex numerator and concave denominator. Dinkelbach's method \cite{D67,S76} is the classical approach used to solve convex fractional problems by tackling a parametric reformulation of the original FP problem.
Several iterative schemes based on Dinkelbach's idea are proposed for special cases of convex FP \cite{Ben,BoCs,DkPo,Scha,StMi}. In the recent paper~\cite{BDLi}, an extrapolated proximal subgradient algorithm is applied to nonconvex and nonsmooth FPs. A disadvantage of that algorithm is that a nonconvex subproblem must be globally solved at every iteration, restricting thus the approach to a few particular FP cases. In theory, that algorithm could be applied to our FP formulation, but the structure of our problem is not (computationally) favorable to that approach. 

This study addresses the challenges posed by our FP, a nonconvex FP problem that does not conform to the requirements of existing practical methodologies. Inspired by the recent paper~\cite{FJOS}, we introduce a Sequential Regularized Partial Linearization algorithm to compute a stationary point of our FP formulation, thus a critical angle. Our algorithm,  that features the main contribution of this work, solves a sequence of two independent convex subproblems, which are projections onto well-structured sets. The essential advantage of our approach lies in the fact that solving these subproblems requires minimal computational efforts as they have straightforward solutions in several settings of practical interest. 
To demonstrate the effectiveness of our approach, we provide numerical results that focus on computing critical angles between LISC cones. These angles are commonly examined in the relevant literature, but a thorough numerical treatment of these problems has not been previously investigated.

This work is organized as follows. Section~\ref{sec:def} presents the main definitions and preliminary results. The concept of LISC cones and relevant properties are presented in Section~\ref{sec:LISC}. Section~\ref{sec:criang} clarifies the connection between critical angles and stationary points of the FP formulation. The proposed algorithm is presented in Section~\ref{sec:alg}
along with a converge analysis. Numerical experiments are reported in Section~\ref{sec:comp}. Finally, some conclusions are given in Section~\ref{sec:conc}.

\section{Main definitions and prerequisites}\label{sec:def}
Let $\mathbb V$, $\mathbb X$ and $\mathbb Y$ be Euclidean spaces. We denote by $\mathcal C(\mathbb V)$ the set of all nontrivial closed convex cones in $\mathbb V$, that is,
\begin{equation}\label{eq:CX}
\mathcal C(\mathbb V)=\{K \mbox{ closed convex cone in }\mathbb V:\,K\neq \{0\},\,K\neq \mathbb V\}.
\end{equation}
The dual cone of $K\in\mathcal C(\mathbb V)$ is denoted by $K^\ast$:
\[
K^\ast:=\{\zeta \in \mathbb V :\, \inner{\zeta}{u}\geq 0,\; \forall\; u \in K\}.
\]
Throughout this work, $\langle\cdot,\cdot\rangle$ refers to the inner product of each Euclidean space that we are considering. The corresponding inner product to its space will be clear from the context. In particular, whenever we consider the Euclidean space $\mathbb R^n$, we assume that its inner product is $\langle x,y\rangle=x^\top y$, for all $x,y\in\mathbb R^n$. The norm $\Vert\cdot\Vert$ refers to the Euclidean norm induced by $\langle\cdot,\cdot\rangle$.

Given two cones $P,Q\in\mathcal C(\mathbb V)$, by definition, the maximal angle between $P$ and $Q$ is given by
\begin{equation}\label{prob:max}
    \Theta(P,Q):=
\left\{
\begin{array}{ll}
\max&\displaystyle \arccos\,\langle u,v\rangle\\
\mbox{s.t.}&u\in P,\,\Vert u\Vert=1\\
&v\in Q,\,\Vert v\Vert=1.
\end{array}\right.
\end{equation}
 As in \emph{data analysis}, the cosinus similarity of two vectors $u,v$ is defined as the cosinus of the angle $\theta$ between them, i.e., $\cos(\theta)=\frac{\inner{u}{v}}{\norm{u}\norm{v}}$. Recall that two proportional vectors have a cosinus similarity of 1, two orthogonal vectors have a similarity of 0, and two opposite vectors have a similarity of -1. Hence,  the cosinus of the maximal angle between $P$ and $Q$ can be computed by solving the following (nonconvex) optimization problem:
\begin{equation}\label{maximal}
\left\{
\begin{array}{ll}
\min&\displaystyle\langle u,v\rangle\\
\mbox{s.t.}&u\in P,\,\Vert u\Vert=1,\\
&v\in Q,\,\Vert v\Vert=1.
\end{array}\right.
\end{equation}
The following definition is crucial for the remainder of this work.
\begin{definition}\label{def:critical}
\textup{(Maximal and critical angles between two cones).}
\begin{enumerate}
\item A pair $(u,v)\in \mathbb{V}\times  \mathbb{V}$ solving problem (\ref{maximal}) is called an \emph{antipodal pair} of $(P,Q)$.  The angle between an antipodal pair is called the maximal angle between the cones $P$ and $Q$.
\item A pair $(u,v)$ is called a \emph{critical pair} of $(P,Q)$ if it satisfies the following conditions:
\begin{equation} \label{crit}
\left\{\begin{array}{lll}
u\in P,\,\Vert u\Vert =1,\\ [1,0mm]
v\in Q,\,\Vert v\Vert=1,\\ [1,0mm]
v-\langle u,v\rangle u\in P^\ast,\\ [1,0mm]
u-\langle u,v\rangle v\in Q^\ast,
\end{array}
\right.
\end{equation}
where  $P^\ast$ and  $Q^\ast$ are the dual cones of $P$ and $Q$, respectively.  
The angle between a critical pair is called a critical angle between the cones $P$ and $Q$. 
\end{enumerate}
\end{definition}
As we will see in Section~\ref{sec:LISC}, in many important cases the cones $P$ 
and $Q$ are images of linear mappings applied to more structured/simpler cones $\KX$ and $\KY$, i.e., $P=G(\KX)$ and $Q=H(\KY)$, with $G$ and $H$ linear applications of appropriated dimensions.
In these cases, it is computationally advantageous to work with the more structured cones $\KX$ and $\KY$. As we will see in Section\,\ref{sec:criang},  problem \eqref{prob:max} can be reformulated as 
\begin{equation}\label{maxfrac}
\cos[\Theta(P,Q)]=
\left\{
\begin{array}{ll}
\min&\displaystyle \Phi(x,y):=\frac{\langle Gx,Hy\rangle}{\left\|Gx\right\| \left\|Hy\right\|}\\
\mbox{s.t.}&x\in K_{\mathbb X},\;\langle e_\mathbb X,x\rangle=1\\
&y\in K_{\mathbb Y},\;\langle e_\mathbb Y,y\rangle=1.
\end{array}\right.
\end{equation}  
In this case, $\KX$ and $\KY$ are the symmetric cones of certain Euclidean Jordan algebras $\mathbb X$ and $\mathbb Y$, and $\eX$ and $\eY$ denote the unit elements of $\mathbb X$ and $\mathbb Y$, respectively.

Observe that the optimization problems  \eqref{maximal} and \eqref{maxfrac} 
share a common structure. Indeed, they fit into the following more general class of problems:
\begin{equation}\label{model}
\left\{
\begin{array}{ll}
\min&\displaystyle\Psi(x,y)\\
\mbox{s.t.}&x\in \KX,\,\phi_1(x)=0\\
&y\in \KY,\,\phi_2(y)=0,
\end{array}
\right.
\end{equation}
where $\Psi:\mathbb{X}\times \mathbb{Y} \to \R$, $\phi_1:\mathbb{X} \to \R$, and $\phi_2:\mathbb{X} \to \R$ are differentiable functions over the feasible set of~\eqref{model}.

\begin{definition}\label{def:stationary}
We say that $(\bar x,\bar y)$ is a \emph{stationary point} of \eqref{model} if there exist $\gamma_1,\gamma_2\in\mathbb R$ such that
\begin{equation}\label{crit2}
\left\{
\begin{array}{l}
\KX\ni \bar x\;\perp\;\left(\nabla_x\Psi(\bar x,\bar y)+\gamma_1\nabla\phi_1(\bar x)\right)\in \KX^\ast,\\
\KY\ni \bar y\;\perp\;\left(\nabla_y\Psi(\bar x,\bar y)+\gamma_2\nabla\phi_2(\bar y)\right)\in \KY^\ast,\\
\phi_1(\bar x)=0,\;\phi_2(\bar y)=0,
\end{array}
\right.
\end{equation}
 where the notation $\perp$ means orthogonality.
\end{definition}

We now show that our concept of stationarity coincides with that of classic stationarity for the nonlinear optimization problem~\eqref{model}.

\begin{theorem}\label{theo:solstat}
Let $\KX\in \mathcal{C}(\mathbb{X})$, $\KY\in \mathcal{C}(\mathbb{Y})$ and 
suppose that $\Psi,\phi_1$ and $\phi_2$  are differentiable over the feasible set of problem~\eqref{model}. Then 
the definition of stationarity given in~\eqref{crit2} coincides with that of stationarity for the nonlinear programming problem~\eqref{model}.
\end{theorem}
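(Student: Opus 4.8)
The plan is to show that the complementarity system \eqref{crit2} is exactly the Karush-Kuhn-Tucker (KKT) system for the nonlinear program \eqref{model}, once one accounts for the conic constraints $x\in\KX$ and $y\in\KY$ via their dual cones. The key conceptual point is that the classical stationarity (KKT) conditions for a problem of the form $\min\,\Psi(x,y)$ subject to $x\in\KX$, $y\in\KY$, $\phi_1(x)=0$, $\phi_2(y)=0$ combine a Lagrange multiplier $\gamma_1$ (resp. $\gamma_2$) for each equality constraint with a normal-cone condition encoding the conic membership, and that for a closed convex cone the normal cone at a feasible point admits the well-known description in terms of the dual cone.

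First I would write the Lagrangian of \eqref{model} treating only the equality constraints explicitly,
\[
L(x,y,\gamma_1,\gamma_2)=\Psi(x,y)+\gamma_1\phi_1(x)+\gamma_2\phi_2(y),
\]
and recall that the standard first-order optimality condition for minimizing $L$ over the convex set $\KX\times\KY$ is the variational inequality $-\nabla_{(x,y)}L(\bx,\by,\gamma_1,\gamma_2)\in N_{\KX\times\KY}(\bx,\by)$, where $N$ denotes the (convex-analysis) normal cone. Since the feasible cone factorizes as a product, the normal cone splits as $N_{\KX}(\bx)\times N_{\KY}(\by)$, so the condition decouples into one inclusion in $\mathbb X$ and one in $\mathbb Y$. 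I would then carry out the argument for the $x$-block; the $y$-block is identical by symmetry.

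The heart of the proof is the standard polarity fact that for $K\in\mathcal C(\mathbb X)$ and $\bx\in K$,
\[
-w\in N_K(\bx)\quad\Longleftrightarrow\quad \bx\in K,\;\; w\in K^\ast,\;\;\langle \bx,w\rangle=0,
\]
that is, $K\ni\bx\perp w\in K^\ast$. Applying this with $w=\nabla_x\Psi(\bx,\by)+\gamma_1\nabla\phi_1(\bx)$ converts the normal-cone inclusion into precisely the first line of \eqref{crit2}; the analogous substitution in $\mathbb Y$ gives the second line, and the equality constraints $\phi_1(\bx)=0$, $\phi_2(\by)=0$ are feasibility of the original program, reproducing the third line. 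Conversely, reading these equivalences from right to left shows that any point satisfying \eqref{crit2} satisfies the KKT/variational-inequality conditions, establishing the claimed coincidence in both directions.

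The main obstacle, and the only subtlety worth flagging, is the justification of the complementarity characterization of the normal cone: the equivalence $-w\in N_K(\bx)\Leftrightarrow(\bx\perp w,\,w\in K^\ast)$ relies on $K$ being a closed convex cone, which is exactly the hypothesis $\KX\in\mathcal C(\mathbb X)$, $\KY\in\mathcal C(\mathbb Y)$. I would verify this directly from the definition of the normal cone of a cone, $N_K(\bx)=\{w:\langle w,u-\bx\rangle\le 0\ \forall u\in K\}$, using the positive homogeneity of $K$ (testing with $u=0$ and $u=2\bx$ forces $\langle w,\bx\rangle=0$, and testing with arbitrary $u\in K$ then gives $-w\in K^\ast$). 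Beyond this, the proof is essentially bookkeeping: splitting the product normal cone and matching terms line by line.
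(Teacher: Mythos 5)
Your proof is correct, and it reaches the same intermediate system as the paper --- the normal-cone (KKT) form of stationarity, followed by conversion to the complementarity form \eqref{crit2} --- but by a noticeably more direct technical path. The paper keeps only the equality constraints explicit and absorbs the conic constraints into two indicator functions $\ind_{C_1}+\ind_{C_2}$ with $C_1=\KX\times\mathbb{Y}$ and $C_2=\mathbb{X}\times\KY$; it then works with the Clarke subdifferential of the resulting Lagrangian, invokes the convex subdifferential sum rule (\cite[Thm 23.8]{Rock}), which forces it to verify the relative-interior qualification ${\tt ri\,}\KX\times{\tt ri\,}\KY\neq\emptyset$ via the hypothesis $\KX,\KY\neq\{0\}$, and finally cites \cite[Prop.~1.1.3]{FP} to pass from the normal-cone inclusion to \eqref{crit2}. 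You instead exploit the product structure of the constraint set directly: the identity $N_{\KX\times\KY}(\bx,\by)=N_{\KX}(\bx)\times N_{\KY}(\by)$ holds with no qualification whatsoever, so the sum rule and its relative-interior condition disappear, and you prove the key polarity fact (for a convex cone $K$ and $\bx\in K$, $-w\in N_K(\bx)$ iff $w\in K^\ast$ and $\langle\bx,w\rangle=0$) from first principles by testing $u=0$, $u=2\bx$, and general $u\in K$ --- this is exactly the content of the result the paper imports from Facchinei--Pang. What the paper's route buys is an explicit anchoring in the nonsmooth-analysis formalism (Clarke subdifferentials) and an explicit statement of when multipliers exist (the LICQ-type caveat $(\nabla\phi_1(\bx),\nabla\phi_2(\by))\neq(0,0)$); what yours buys is self-containedness and economy, since both external citations and the relative-interior check become unnecessary. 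One small notational caution: in your verification of the polarity fact you use $w$ both as the generic element of $N_K(\bx)$ in the definition and as the vector being negated in the equivalence, so the conclusion ``$-w\in K^\ast$'' refers to the former; the argument is right, but when writing it up you should separate the two roles to avoid an apparent sign error.
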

\begin{proof}
We can rewrite problem~\eqref{model} with the help of the indicator function of the convex sets $C_1 := \KX \times \mathbb Y$ and $C_2:=\mathbb X \times \KY$ as follows:
\[
\left\{
\begin{array}{ll}
\min&\displaystyle \Psi(x,y) + \ind_{C_1}(x,y) + \ind_{C_2}(x,y)\\
\mbox{s.t.}&\phi_1(x)=0,\; \phi_2(y)=0.
\end{array}\right.
\]
If the above problem has  Lagrange multipliers $\gamma_1,\gamma_2 \in\R$ (e.g., if $(\nabla \phi_1(\bar x),\nabla \phi_2(\bar y))\neq (0,0)$ -- satisfying thus the LICQ constraint qualification), then all of its (local) solutions must satisfy the KKT system (see for instance \cite[\S 10]{RW})
\[
\left\{
\begin{array}{lll}
0 \in \pclarke_{(x,y)} L(\bar x,\bar y,\gamma)\\
\bar x \in \KX,\, \phi_1(\bar x)=0\\
\bar y \in \KY,\, \phi_2(\bar y)=0,
\end{array}
\right.
\]
where the Lagrangian function is given by \[
L(x,y,\gamma):= \Psi(x,y)  + \ind_{C_1}(x,y) + \ind_{C_2}(x,y)+\gamma_1\phi(x)+ \gamma_2\phi_2(y),
\]
and $\pclarke f $ denotes the  Clarke subdifferential of a locally Lipschitz  function $f$. Observe that
\[
 \pclarke_{(x,y)} L(x,y,\gamma)=\begin{pmatrix}
 \nabla_x \Psi(x,y)\\
 \nabla_y \Psi(x,y)
 \end{pmatrix}
 +\pclarke [ \ind_{C_1}(x,y) + \ind_{C_2}(x,y)] + \begin{pmatrix}
 \gamma_1 \nabla \phi_1(x)\\
  \gamma_2 \nabla \phi_2(y)\\
 \end{pmatrix},
\]
and being $\ind_{C_1}(x,y) + \ind_{C_2}(x,y)$ a convex function, its Clarke subdifferential coincides with that of the Convex Analysis. Thus, $\pclarke [ \ind_{C_1}(x,y) + \ind_{C_2}(x,y)] = \partial [ \ind_{C_1}(x,y) + \ind_{C_2}(x,y)]= \partial \ind_{C_2}(x,y) + \partial  \ind_{C_2}(x,y)$ provided
${\tt ri\, dom}( \ind_{C_1})\cap {\tt ri\, dom}( \ind_{C_2}) \neq \emptyset$ (see \cite[Thm 23.8]{Rock}).
Furthermore, ${\tt ri\, dom}( i_{C_1})={\tt ri }\KX \times \mathbb{Y}$ and
${\tt ri\, dom}( i_{C_2})= \mathbb{X} \times {\tt ri }\KY$. As $\KX\neq \{0\}$ and $\KY\neq \{0\}$ (c.f. \eqref{eq:CX}) then  
${\tt ri }\KX\neq \emptyset$ and ${\tt ri }\KY\neq \emptyset$. Recall that $C_1$ is a convex set, and thus
$ \partial \ind_{C_1}(x,y) = N_{C_1}(x,y)=N_{\KX}(x)\times 0$. Analogously,  $ \partial \ind_{C_2}(x,y) = 0 \times N_{\KY}(y)$.
Hence, the above KKT system becomes
\begin{equation}\label{aux0}
\left\{
\begin{array}{lll}
0 \in \begin{pmatrix}
 \nabla_x \Psi(\bar x,\bar y)\\
 \nabla_y \Psi(\bar x,\bar y)
 \end{pmatrix}
 +
 \begin{pmatrix}
  N_{\KX}(\bar x)\\
  N_{\KY}(\bar y)
 \end{pmatrix}
 +
  \begin{pmatrix}
 \gamma_1 \nabla \phi_1(\bar x)\\
  \gamma_2 \nabla \phi_2(\bar y)\\
 \end{pmatrix}
 \\
\bar x \in \KX,\, \phi_1(\bar x)=0\\
\bar y \in \KY,\, \phi_2(\bar y)=0.
\end{array}
\right.
\end{equation}
Then, by using the well-known results in~\cite[Prop.~1.1.3]{FP}, we can see that the above system is nothing but the one given in~\eqref{crit2}.

\end{proof}
It is not surprising that~\eqref{crit} is a particular case of~\eqref{crit2}, as the following corollary shows.
\begin{corollary}
Under the setting of Theorem~\ref{theo:solstat},
    suppose that $\KX=P$, $\KY=Q$, $\Psi(x,y)=\inner{x}{y}$, $\phi_1(x)=\norm{x}-1$, and $\phi_1(y)=\norm{y}-1$, i.e., problem~\eqref{model} coincides with~\eqref{maximal}. Furthermore, suppose that $(\bx,\by)$ is a stationary point of problem~\eqref{model}. Then, the Lagrange multipliers $\gamma_1,\gamma_2 \in \R$ in~\eqref{crit2} are given by
    $
    \gamma_1=\gamma_2=-\inner{\bar x}{\bar y}.
    $
\end{corollary}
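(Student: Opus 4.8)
The plan is to substitute the specific data of the corollary into the stationarity system \eqref{crit2} and then read off the multipliers directly from the orthogonality conditions. First I would compute the relevant gradients. Since $\Psi(x,y)=\inner{x}{y}$, we have $\nabla_x\Psi(\bx,\by)=\by$ and $\nabla_y\Psi(\bx,\by)=\bx$; and since $\phi_1(x)=\norm{x}-1$ (and likewise $\phi_2$), its gradient is $\nabla\phi_1(x)=x/\norm{x}$ away from the origin. The feasibility part of \eqref{crit2} forces $\norm{\bx}=\norm{\by}=1$, so these constraint gradients simplify to $\nabla\phi_1(\bx)=\bx$ and $\nabla\phi_2(\by)=\by$. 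This is the only place where a little care is needed: the norm is differentiable only off the origin, but the normalization constraints guarantee $\bx\neq 0$ and $\by\neq 0$, so the gradients are well defined.

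With these substitutions, the two inclusions in \eqref{crit2} become $P\ni\bx\perp(\by+\gamma_1\bx)\in P^\ast$ and $Q\ni\by\perp(\bx+\gamma_2\by)\in Q^\ast$. Next I would extract the multipliers purely from the orthogonality relations, without using the membership in the dual cones. The first relation gives $\inner{\bx}{\by+\gamma_1\bx}=0$, that is, $\inner{\bx}{\by}+\gamma_1\norm{\bx}^2=0$; using $\norm{\bx}=1$ yields $\gamma_1=-\inner{\bx}{\by}$. By the symmetric computation, the second orthogonality relation gives $\inner{\by}{\bx}+\gamma_2\norm{\by}^2=0$, hence $\gamma_2=-\inner{\bx}{\by}$, which is precisely the claimed common value.

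There is essentially no obstacle here; the statement is a direct computation once the gradients have been substituted. The only point worth flagging is that Theorem~\ref{theo:solstat} already guarantees that such multipliers exist — the LICQ-type condition $(\nabla\phi_1(\bx),\nabla\phi_2(\by))\neq(0,0)$ holds automatically, since $\nabla\phi_1(\bx)=\bx$ has unit norm — so the corollary need not re-establish existence; it merely pins down the value of the multipliers furnished by \eqref{crit2}. Finally, I would observe that after substituting $\gamma_1=\gamma_2=-\inner{\bx}{\by}$ the two inclusions read $\by-\inner{\bx}{\by}\bx\in P^\ast$ and $\bx-\inner{\bx}{\by}\by\in Q^\ast$, thereby recovering the last two lines of the critical-pair system \eqref{crit}; this makes explicit the claim, stated in the remark preceding the corollary, that \eqref{crit} is the specialization of \eqref{crit2} to problem \eqref{maximal}.
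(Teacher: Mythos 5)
Your proposal is correct and follows essentially the same route as the paper: substitute the gradients of $\Psi$, $\phi_1$, $\phi_2$ into the stationarity system, take the inner product of the first relation with $\bx$ (resp.\ the second with $\by$), and use $\Vert\bx\Vert=\Vert\by\Vert=1$ to solve for the multipliers. The only cosmetic difference is that the paper extracts a normal-cone element $\zeta\in N_P(\bx)$ from system~\eqref{aux0} and invokes $\inner{\bx}{\zeta}=0$, whereas you read the orthogonality directly off the complementarity form~\eqref{crit2}; these are equivalent by the same result of Facchinei--Pang already cited in Theorem~\ref{theo:solstat}.
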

\begin{proof}
In this setting, the Lagrange multipliers always exist due to the fact that
 $(\nabla \phi_1(\bar x),\nabla \phi_2(\bar y))=({\bx}/{\norm{\bx}},{\by}/{\norm{\by}})\neq (0,0)$, i.e., the problem satisfies the LICQ constraint qualification.
 Furthermore,  system~\eqref{aux0} becomes
\[
\left\{
\begin{array}{lll}
0 \in \begin{pmatrix}
 \bar y\\
 \bar x
 \end{pmatrix}
 +
 \begin{pmatrix}
 N_P(\bar x)\\
 N_Q(\bar y)
 \end{pmatrix}
 +
  \begin{pmatrix}
 \gamma_1 \bar x/\norm{\bx}\\
 \gamma_2 \bar y/\norm{\by}\\
 \end{pmatrix}
 \\
\bar x \in P,\, \Vert \bar x\Vert=1\\
\bar y \in Q,\, \Vert \bar y\Vert=1.
\end{array}
\right.
\]
Let $\zeta \in N_P(\bar x)$ such that $0=\bar y + \zeta +  \gamma_1 \bar x/\norm{\bx}$. By multiplying this equality by $\bar x$ and recalling that $\Vert \bar x\Vert = 1$, we get $0=\langle \bar x, \bar y\rangle+\langle \bar x, \zeta \rangle+  \gamma_1$. As $\langle \bar x, \zeta \rangle =0$, we have that
 $\gamma_1 =  -\langle \bar x, \bar y\rangle$. Analogously,  $ \gamma_2 =  -\langle \bar x, \bar y\rangle$. 
\end{proof}

It is straightforward to see that, in the setting of the above corollary, the conditions in~\eqref{crit2} boils down to~\eqref{crit}.

\section{LISC cones}\label{sec:LISC}

This section presents the concept of LISC cones which are sets obtained as \emph{Linear Image of Symmetric Cones}. Recall that a \emph{symmetric cone} in a Euclidean space is a cone that is self-dual and homogeneous, see \cite[Section\,I.1]{FK}. The symmetric cones possess a reach structure and they are central objects in many optimization problems such as linear programming, second-order cone programming, and semidefinite programming \cite{VaBo, AlGo,RS}. For those programs, the theory and the numerical resolution are well-developed and nowadays there are many solvers available to deal with them. It turns out that many convex cones arise as the linear image of symmetric ones. As we shall see in the next sections, the computation of critical angles for these cones can be performed by solving a sequence of (convex) symmetric cone programs.

\subsection{Main definitions and some examples}
It is known that every symmetric cone is obtained as the cone of square elements of a certain Euclidean Jordan Algebra (EJA), see \cite[Theorem\,III.3.1]{FK}.   Thus, to present our results we shall use the machinery of EJAs.  We refer the reader to the book of Faraut and Kor\'anyi \cite{FK} for an exposition of the definitions and main results of this algebra.

Let $\mathbb X$ be an EJA of rank $r$ with unit element $e$, and equipped with an inner product $\langle\cdot,\cdot\rangle$ and a Jordan product $\circ$.
 We need to recall some ingredients from EJA concerning the spectral decomposition \cite[Theorem\,III.1.2]{FK}.  An element $c\in\mathbb X$ is called \emph{idempotent} if $c\circ c=c$. An idempotent element $c$ is called \emph{primitive} if it is non-zero and cannot be written as the sum of two non-zero idempotents. A collection of primitive idempotent elements $\{c_1,\ldots,c_r\}$ is called \emph{Jordan frame} if $c_i\circ c_j=0$ for all $i\neq j$ and $\sum_{i=1}^r c_i=e$. Any element $x$ in $\mathbb X$  admits a spectral decomposition. That is, there exist scalars $\lambda_1,\ldots,\lambda_r$ (called the eigenvalues of $x$) and a Jordan frame $\{c_1,\ldots,c_r\}$ such that $x=\sum_{i=1}^r\lambda_i c_i$. The trace of $x=\sum_{i=1}^r\lambda_i c_i$ is defined as ${\rm tr}(x)=\lambda_1+\cdots+\lambda_r$.
 
 \begin{definition}
     [Symmetric cone in $\mathbb X$]
     We denote by $K_\mathbb X$ the symmetric cone in $\mathbb X$, which corresponds to the cone of square elements of $\mathbb X$, that is,  $K_{\mathbb X}=\{x\circ x:x\in\mathbb X\}$.
 \end{definition} 
 Observe that $x=\sum_{i=1}^r\lambda_i c_i\in K_{\mathbb X}$ if and only if $\lambda_i\geq 0$ for all $i=1,\ldots,r$.
 Below, we provide some examples of EJAs and corresponding symmetric cones.
\begin{example}[EJAs and corresponding symmetric cones] \label{ex:EJA}\mbox{ }
\begin{enumerate}[label=(\alph*)]
    \item The space $\mathbb R^n$ is an EJA of rank $n$ with unit element ${\bf 1}_n=(1,\ldots,1)^\top$,  $\langle x,y\rangle=x^\top y$, and $x\circ y=(x_1y_1,\ldots,x_ny_n)^\top$. Its associated symmetric cone is the nonnegative orthant $\mathbb R^n_+$. 
    \item Alternatively, $\mathbb R^n=\mathbb R^{n-1}\times\mathbb R$ is an EJA (Jordan spin algebra) of rank $2$ with unit element given by $e=(0,\ldots,0,1)^\top$. For $x=(\xi,t),\, y=(\eta,s)\in\mathbb R^{n-1}\times\mathbb R$, the inner product is $\langle x,y\rangle=x^\top y$ and  $x\circ y=(s\xi+t\eta,x^\top y)$. Its corresponding symmetric cone is the \emph{Lorentz cone}, which is given by $\mathcal L^n_+:=\{(\xi,t)\in\mathbb R^n:\Vert\xi\Vert\leq t\}$.
    \item The space $\mathcal S^n$ of the symmetric matrices of order $n$ is an EJA of rank $n$ with unit element $I$ (the identity matrix of order $n$), equipped with the Frobenius (or trace) inner product $\langle X,Y\rangle={\rm Tr}(XY)$ and the Jordan product $X\circ Y=(XY+YX)/2$. Its associated symmetric cone is the \emph{positive semidefinite (SDP) cone} which is defined as follows, $\mathcal P_n:=\{X\in\mathcal S^n\,:\,u^\top Xu\geq 0,\forall u\in\mathbb R^n\}.$
\end{enumerate}
\end{example}

A cone $P$ in a Euclidean space $\mathbb V$ is a LISC cone if it can be written as the linear image of some symmetric cone satisfying some mild assumptions. More precisely:
\begin{definition}\label{def:LISC}
Let $P$ be a closed convex cone in a Euclidean space $\mathbb V$. We say that $P$ is a \emph{LISC cone} if there exist a Euclidean Jordan algebra $\mathbb X$ and a linear map $G:\mathbb X\to\mathbb V$ such that $G(K_{\mathbb X})=P$,  satisfying the following assumptions:
\begin{align}
    &\mbox{$G(c)\neq 0$ for every primitive idempotent element $c\in\mathbb X$.}\tag*{(A1)}\label{assump1}\\
    &\mbox{$P$ is pointed (i.e. $P\cap(-P)=\{0\}$).}\tag*{(A2)}\label{assump2}
\end{align}

\end{definition}

Observe that in the above definition, we are assuming that $P=G(\KX)$ is closed in advance. This is because in general, the linear image of $\KX$ may fail to be closed (cf. \cite{Pataki}).

There are vast classes of convex cones that are LISC cones.  We now list some of them.
\begin{example}
[LISC cones]\mbox{ }
\begin{enumerate}[label=(\alph*)]
\item \emph{Polyhedral cones} \cite{Greer}. A cone $P$ in $\mathbb R^n$ is polyhedral if there exist vectors $g_1,\ldots,g_p\in\mathbb R^n$ (generators of $P$) such that 
$$P={\rm cone}\{g_1,\ldots,g_p\}:=\{\alpha_1 g_1+\cdots+\alpha_p g_p:\alpha_i\geq 0,\forall i=1,\ldots,p\}.$$
Then, by taking $G:=[g_1\,\cdots\,g_p]$ as the $n\times p$ matrix formed by the generators of $P$, we have that $P=G(\mathbb R^p_+)$. It is usual to assume that the generators of $P$ are positive linear independent (i.e., if $\sum_{i=1}^p\alpha_i g_i=0$ and $\alpha_i\geq 0$ for all $i$, then $\alpha_i=0$ for all $i$). This property implies \ref{assump1} and \ref{assump2}. Indeed, the primitive idempotent elements of $\mathbb R^p$ are $e_1,\ldots,e_p$ which are the elements of the canonical basis of $\mathbb R^p$. Observe that $G(e_i)=g_i\neq 0$ for every $i$. Then, \ref{assump1} holds. On the other hand, suppose that \ref{assump2} is not satisfied. That is, suppose that $P$ is not pointed. Then, there exists $u\neq 0$ such that $u=\sum_{i=1}^p\alpha_i g_i$ and   $-u=\sum_{i=1}^p\beta_i g_i$ for some $\alpha_i,\beta_i\geq 0$ for all $i$. Then, $\sum_{i=1}^p(\alpha_i+\beta_i) g_i=0$. Since we assume that the generators of $P$ are positive linear independents, we conclude that $\alpha_i=\beta_i=0$ for all $i$. It means $u=0$ which is a contradiction. Hence, $P$ is a LISC cone with the positive orthant $\mathbb R^p_+$ as its associated symmetric cone. 
\item \emph{Ellipsoidal cones} \cite{IS2,SeTo,SeSo2}. A cone $P$ in $\R^n$ is ellipsoidal if there exists a symmetric positive definite matrix $A$ of order $n-1$ such that
\begin{equation*}
    P= \{(\xi,t) \in \R^{n-1}\times\R: \sqrt{\langle \xi, A \xi} \rangle \leq t \}.
\end{equation*}
By taking the linear map $G:\mathbb R^n\to\mathbb R^n$ given by $G(\xi,t)=(A^{-1/2}\xi,t)$, we have that $P=G(\mathcal L^n_+)$. Since $G$ is invertible and $\mathcal L^n_+$ is pointed, we have that \ref{assump1} and \ref{assump2} are satisfied. Thus, $P$ is a LISC cone with the Lorentz cone $\mathcal L^n_+$ as its associated symmetric cone.

\item \emph{Loewnerian cones} \cite{SeSo0}. By definition, a cone $P$ in $\mathcal S^n$ is Loewnerian if there exists an invertible linear map $G:\mathcal S^n\to\mathcal S^n$ such that $P=G(\mathcal P_n)$. Assumptions \ref{assump1} and \ref{assump2} hold because $G$ is invertible and $\mathcal P_n$ pointed. Then, $P$ is a LISC cone with the SDP cone $\mathcal P_n$ as its associated symmetric cone.
\end{enumerate}
\end{example}
 As we will shortly see, our technique to find critical angles between a pair of LISC cones consists of reformulating problem \eqref{prob:max} as a minimization problem over a convex set. More precisely, for a pair of LISC cones $P=G(\KX)$ and $Q=H(\KY)$, the (bilinear) minimization problem \eqref{maximal} with nonconvex feasible set 
 $$S_{P,Q}:=\{(u,v)\in P\times Q:\Vert u\Vert=1,\,\Vert v\Vert=1\}$$
 will be reformulated as a (fractional) minimization problem \eqref{maxfrac} with convex set
 $$\Delta_{\mathbb X,\mathbb Y}:=\{(x,y)\in\KX\times\KY:\langle \eX,x\rangle=1,\,\langle \eY,y\rangle=1\},$$
 where $\eX$ and $\eY$ are the unit elements of the EJAs $\mathbb X$ and $\mathbb Y$, respectively.
Observe that $S_{P,Q}=S_P\times S_Q$, where 
$$S_P:=\{u\in P:\Vert u\Vert=1\}\quad\mbox{and}\quad S_Q:=\{v\in Q:\Vert v\Vert=1\}.$$
Analogously, $\Delta_{\mathbb X,\mathbb Y}=\Delta_{\mathbb X}\times \Delta_{\mathbb Y}$, where 
$$\Delta_{\mathbb X}:=\{x\in\KX:\langle \eX,x\rangle=1\}\quad \mbox{and}\quad \Delta_{\mathbb Y}:=\{y\in\KY:\langle \eY,y\rangle=1\}.$$
Here, $\Delta_{\mathbb X}$ and $\Delta_{\mathbb Y}$ are called the \emph{spectraplexes} of $\mathbb X$ and $\mathbb Y$, respectively. 
A key fact to formulate the maximal angle problem as the minimization problem \eqref{maximal} is that each nonzero $u\in P$ can be written as a positive multiple of an element of $S_P$; indeed, $u=\Vert u\Vert\frac{u}{\Vert u\Vert}$ and $\frac{u}{\Vert u\Vert}\in S_P$ (an analogous property 
 is for a nonzero $v\in Q$). In the same way, we must ensure that each nonzero $x\in \KX$ can be written as a positive multiple of an element of $\Delta_\mathbb X$ (an analogous property for a nonzero $y\in\KY$). This is proved in the following lemma.

\begin{lemma}\label{positive}
Let $\mathbb X$ be an EJA of rank $r$ with unit element $\eX$. Let $\KX$ be the symmetric cone of $\mathbb X$.   Then, the following statements are satisfied:
\begin{enumerate}
    \item $\langle e_{\mathbb X}, x\rangle > 0$ for all $x\in K_\mathbb X\setminus\{0\}$.
    \item $\{\alpha x:x\in \Delta_\mathbb X,\,\alpha>0\}=K_\mathbb X\setminus\{0\}$.
\end{enumerate}
\end{lemma}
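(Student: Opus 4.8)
The key tool is the spectral decomposition recalled just before the lemma: every $x \in \KX$ admits a decomposition $x = \sum_{i=1}^r \lambda_i c_i$ with $\lambda_i \geq 0$ and $\{c_1,\ldots,c_r\}$ a Jordan frame. The plan is to express $\langle \eX, x\rangle$ in terms of the eigenvalues $\lambda_i$ and then exploit nonnegativity.

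For statement~1, I would first establish that $\langle \eX, x\rangle = \mathrm{tr}(x)$. Since $\eX = \sum_{i=1}^r c_i$ and the $c_i$ form a Jordan frame, the natural route is to use the standard identity that the trace form $\langle\cdot,\cdot\rangle$ pairs the Jordan frame orthonormally (more precisely, $\langle c_i, c_j\rangle = 0$ for $i \neq j$ and $\langle c_i, c_i\rangle = \mathrm{tr}(c_i) = 1$ for each primitive idempotent $c_i$ in this trace inner product). Granting this, $\langle \eX, x\rangle = \langle \sum_i c_i, \sum_j \lambda_j c_j\rangle = \sum_i \lambda_i = \mathrm{tr}(x)$. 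Then for $x \in \KX \setminus \{0\}$ all eigenvalues are nonnegative and at least one is strictly positive (otherwise $x = 0$), so $\langle \eX, x\rangle = \sum_i \lambda_i > 0$, which gives~1.

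For statement~2, the two inclusions are routine once~1 is in hand. For $\subseteq$: if $\alpha > 0$ and $x \in \DX$, then $\alpha x \in \KX$ (a cone is closed under positive scaling) and $\langle \eX, \alpha x\rangle = \alpha > 0$, so $\alpha x \neq 0$. For $\supseteq$: given $x \in \KX \setminus \{0\}$, set $\alpha := \langle \eX, x\rangle$, which is strictly positive by statement~1; then $x/\alpha$ lies in $\KX$ and satisfies $\langle \eX, x/\alpha\rangle = 1$, so $x/\alpha \in \DX$ and $x = \alpha\,(x/\alpha)$ exhibits $x$ in the desired form. This mirrors exactly the normalization $u = \Vert u\Vert \cdot (u/\Vert u\Vert)$ used for $S_P$.

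The only genuine obstacle is the identity $\langle \eX, x\rangle = \mathrm{tr}(x)$, i.e.\ justifying that the given inner product agrees with the trace on the relevant elements. In a general EJA the inner product need not a priori be the trace form, so I would either invoke that the $c_i$ are orthogonal idempotents of unit trace under the associative/trace inner product being used (the canonical choice in \cite{FK}), or, more robustly, argue directly that $\langle \eX, c_i\rangle \geq 0$ with strict positivity for each primitive idempotent: since $c_i \in \KX$ and $\eX$ lies in the interior of the self-dual cone $\KX$, we have $\langle \eX, c_i\rangle > 0$ for every nonzero $c_i \in \KX$, and hence $\langle \eX, x\rangle = \sum_i \lambda_i \langle \eX, c_i\rangle > 0$ whenever some $\lambda_i > 0$. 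This self-duality argument avoids committing to a specific normalization and is the cleanest way to close the proof.
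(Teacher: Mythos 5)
Your proposal is correct, and its second half (statement~2) is exactly the paper's argument: positive scaling for one inclusion, and the normalization $x=\langle \eX,x\rangle\cdot\bigl(x/\langle \eX,x\rangle\bigr)$ for the other. Where you differ is in the key positivity step of statement~1. The paper stays inside the spectral machinery: it writes $\langle \eX,x\rangle=\bigl\langle \sum_i c_i,\sum_j \lambda_j c_j\bigr\rangle=\sum_i\lambda_i\Vert c_i\Vert^2$ (implicitly using that Jordan frames are orthogonal under the associative inner product, since $\langle c_i,c_j\rangle=\langle c_i\circ c_i,c_j\rangle=\langle c_i,c_i\circ c_j\rangle=0$ for $i\neq j$), and then concludes positivity simply because primitive idempotents are nonzero, so $\Vert c_i\Vert^2>0$. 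Note that the paper deliberately does \emph{not} normalize $\Vert c_i\Vert^2$ to $1$; this is wise, because your first suggested route ($\langle c_i,c_i\rangle=\mathrm{tr}(c_i)=1$) actually fails in the paper's setting --- e.g.\ in the Jordan spin algebra of Example~\ref{ex:EJA}(b) with the inner product $x^\top y$ one has $\langle c,c\rangle=1/2$ for primitive idempotents, since the scaling factor is $\kappa=1/2$. You correctly flag this and fall back on a self-duality argument: $\eX$ lies in the interior of the self-dual cone $\KX$, hence $\langle \eX,c\rangle>0$ for every nonzero $c\in\KX$, so $\langle \eX,x\rangle=\sum_i\lambda_i\langle \eX,c_i\rangle>0$. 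This is valid (indeed, it makes the spectral decomposition dispensable: interiority plus self-duality gives $\langle \eX,x\rangle>0$ for any nonzero $x\in\KX$ directly), but it invokes two standard facts the paper never needs to cite --- that $\eX\in\mathrm{int}\,\KX$ and that interior points of the dual cone pair strictly positively with nonzero cone elements --- whereas the paper's computation needs only frame orthogonality and $c_i\neq 0$. So: your argument is a correct, slightly heavier-machinery alternative; the paper's is the more elementary and self-contained one.
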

\begin{proof}
Let $x=\sum_{i=1}^r\lambda_i c_i$ be a nonzero element in $ K_\mathbb X$. Then, $\lambda_i\geq 0$ for all $i=1,\ldots,r$, and not all of them are zero. Thus,
$$\langle e_{\mathbb X}, x\rangle=\left\langle \sum_{i=1}^r c_i,\sum_{i=1}^r\lambda_i c_i\right\rangle=\sum_{i=1}^n\lambda_i\Vert c_i\Vert^2\geq 0.$$
Then, if $\langle e_{\mathbb X},x\rangle=0$ we have that $\lambda_i\Vert c_i\Vert^2=0$ for every $i=1,\ldots,r$. Since any primitive element is nonzero, we deduce that $\lambda_i=0$ for all $i=1,\ldots,r$ which contradicts the fact that $x$ is nonzero. We conclude $\langle e_{\mathbb X},x\rangle>0$.

Let us prove the second part. Since $\Delta_\mathbb X$ is contained in $K_\mathbb X\setminus\{0\}$ and because of the cone structure of $K_\mathbb X\setminus\{0\}$, we have that $\{\alpha x:x\in \Delta_\mathbb X,\,\alpha>0\}\subseteq K_\mathbb X\setminus\{0\}$. Let $u\in K_\mathbb X\setminus\{0\}$. From the previous part, we have that $\langle e_{\mathbb X}, u\rangle > 0$. Then, $u=\alpha x$ with $\alpha:= \langle e_{\mathbb X}, u\rangle>0$ and $x:=u/\langle e_{\mathbb X}, u\rangle\in \Delta_\mathbb X$. The proof is complete.
\end{proof}

 Now, to pass from  problem~\eqref{prob:max} (which is equivalent to~\eqref{maximal})  to the fractional one \eqref{maxfrac}, we must ensure that $Gx\neq 0$ for every $x\in K_\mathbb X\setminus\{0\}$ because we shall need to normalize $Gx$ (an analogous observation is for $Hy$). The next lemma shows that assumptions \ref{assump1} and \ref{assump2} help us to achieve that property.
But before, observe that a general LISC cone can also be described by a sort of conic combination of a  (possibly uncountable) set of vectors, similar to the polyhedral case. Indeed, let $\mathcal O_\mathbb X$ denote the set of (ordered) Jordan frames of $\mathbb X$. That is, $(c_1,\ldots,c_r)\in\mathcal O_\mathbb X$ if and only if $\{c_1,\ldots,c_r\}$ is a Jordan frame of $\mathbb X$. From the spectral decomposition property, it is not difficult to see that $K_\mathbb X$ can be written as 
$$K_\mathbb X=\left\{\sum_{i=1}^r\alpha_ic_i:\alpha_1,\ldots,\alpha_r\geq 0, (c_1,\ldots,c_r)\in\mathcal O_\mathbb X \right\}=\bigcup_{(c_1,\ldots,c_r)\in\mathcal O_\mathbb X}{\rm cone}\{c_1,\ldots,c_r\}.$$
Then, the LISC cone $P=G(\KX)$ can be written as
\begin{equation}\label{polgen}
P=\bigcup_{(c_1,\ldots,c_r)\in\mathcal O_\mathbb X}{\rm cone}\{G(c_1),\ldots,G(c_r)\}.
\end{equation}
Observe that in the polyhedral case, the representation \eqref{polgen} coincides with the conic combination of its generators since the only Jordan frame of $\mathbb R^r$ is its canonical basis. 
 \begin{lemma}\label{lem:lisc}
 Let $P=G(\KX)$ be a LISC cone in $\mathbb V$ with $\mathbb X$ being an EJA of rank $r$. Then, $Gx\neq 0$ for every $x\in K_\mathbb X\setminus\{0\}$.
\end{lemma}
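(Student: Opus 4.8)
The plan is to combine the spectral decomposition of an arbitrary element of $\KX$ with the two defining assumptions \ref{assump1} and \ref{assump2} of a LISC cone. First I would fix a nonzero $x\in\KX$ and write its spectral decomposition $x=\sum_{i=1}^r\lambda_ic_i$, where $\{c_1,\ldots,c_r\}$ is a Jordan frame and the eigenvalues satisfy $\lambda_i\geq 0$ (because $x\in\KX$) with not all of them equal to zero (because $x\neq 0$). Applying the linear map $G$ yields $Gx=\sum_{i=1}^r\lambda_i G(c_i)$. Since each primitive idempotent $c_i$ belongs to $\KX$, every $G(c_i)$ lies in $P$, and by \ref{assump1} each $G(c_i)$ is nonzero; this is precisely the representation \eqref{polgen} restricted to a single Jordan frame.

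The crux is then a short argument by contradiction exploiting pointedness. Suppose $Gx=0$. Letting $I=\{i:\lambda_i>0\}$ (a nonempty set) and isolating any single index $j\in I$, I would rewrite the relation as
\[
\lambda_j G(c_j)=-\sum_{i\in I,\,i\neq j}\lambda_i G(c_i).
\]
The left-hand side lies in $P$, being a positive multiple of $G(c_j)\in P$, whereas the right-hand side is the negative of a conic combination of elements of $P$ and hence lies in $-P$. By assumption \ref{assump2} we have $P\cap(-P)=\{0\}$, so $\lambda_j G(c_j)=0$. Since $\lambda_j>0$, this forces $G(c_j)=0$, contradicting \ref{assump1}. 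Therefore $Gx\neq 0$.

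I expect the only delicate point — and it is more a matter of bookkeeping than a genuine obstacle — to be the decision to separate out a single primitive idempotent rather than to manipulate the whole sum at once: pointedness is a statement about membership in $P\cap(-P)$, so the relation $Gx=0$ must be split into a part manifestly in $P$ and a part manifestly in $-P$. Isolating one term with a strictly positive coefficient achieves exactly this split and, together with \ref{assump1}, produces the contradiction. Notably, no use of compactness, of the closedness of $P$, or of any structural fact about the Jordan algebra beyond the spectral theorem is required.
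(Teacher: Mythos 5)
Your proof is correct and follows essentially the same route as the paper: a contradiction argument based on the spectral decomposition, isolating one term with a strictly positive eigenvalue, and playing assumptions \ref{assump1} and \ref{assump2} against each other. The only cosmetic difference is that the paper divides through by the positive eigenvalue to exhibit $-G(c_1)\in P$ directly, whereas you keep the multiple $\lambda_j G(c_j)$ and conclude it lies in $P\cap(-P)=\{0\}$; the two bookkeeping choices are interchangeable.
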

\begin{proof}
   Reasoning by contradiction, suppose that there is $x=\sum_{i=1}^r\lambda_i c_i$ in $ K_\mathbb X\setminus\{0\}$ such that $G(x)=0$. Then, $\sum_{i=1}^r \lambda_iG(c_i)=0$. Since $x$ is nonzero, we may assume that $\lambda_1\neq 0$. Thus, 
   $$-G(c_1)=\sum_{i=2}^r\left(\frac{\lambda_i}{\lambda_1}\right)G(c_i).$$
   Because of $x\in\KX$, we have that $\lambda_i\geq 0$ for every $i$. Then, from representation \eqref{polgen} we deduce that $-G(c_1)\in P$. On the other hand, it is clear, also for the representation \eqref{polgen}, that $G(c_1)\in P$. Because of the assumption \ref{assump1}, we have that $G(c_1)\neq0$. We have proved that $P\cap(-P)$ has a nonzero element which contradicts assumption \ref{assump2}.
\end{proof}

As a last ingredient of EJA, we now explain how to compute the orthogonal projection of $b\in\mathbb X$ onto the  spectraplex $\DX=\{x\in\KX:\langle \eX,x\rangle=1\}$. This result will be used in our algorithmic approach as we shall see in the next sections. 

\subsection{Projection onto spectraplexes}\label{sec:proj}
An EJA is said to be \emph{scalarizable} if there exists $\kappa>0$ such that $\langle x,y\rangle=\kappa{\rm tr}(x\circ y)$ for every $x,y\in\mathbb X$. The number $\kappa$ is called the scaling factor of the EJA. The EJAs in Example\,\ref{ex:EJA} are scalarizable: the scaling factors of $\mathbb R^n$, $\mathbb R^{n-1}\times\mathbb R$ and $\mathcal S^n$ are $1$, $1/2$ and $1$, respectively. From now on, we assume that all the EJAs considered in this work are scalarizable. For $x\in\mathbb X$, $\lambda(x)$ denotes the vector in $\mathbb R^r$ whose components are the eigenvalues of $x$ arranged in nonincreasing order, i.e., $\lambda_1(x)\geq\lambda_2(x)\geq\cdots\geq\lambda_r(x)$ are the eigenvalues of $x$. A set $\Omega\subseteq\mathbb X$ is called \emph{spectral set} if there exists a permutation invariant set $\mathcal Q\subseteq\mathbb R^r$ such that $\Omega=\lambda^{-1}(\mathcal Q)$. Observe that $\DX$ is a spectral set whose associated permutation invariant set is 
\[\Delta_r^\kappa := \left\{\lambda\in\R^r: \lambda~\geq~0, \:\langle{\bf 1}_r, \lambda\rangle~=~\frac{1}{\kappa}\right\}.\] Indeed, $x\in\DX$ if and only if $\lambda_i(x)\geq 0$ for all $i$, and 
$$1=\langle\eX,x\rangle=\kappa {\rm tr}(\eX\circ x)=\kappa{\rm tr}(x)=\kappa(\lambda_1(x)+\cdots+\lambda_r(x)).$$
It means $\lambda(x)\geq 0$ and $\langle {\bf 1}_r,\lambda(x)\rangle =\frac{1}{\kappa}$. Thus, $\DX=\lambda^{-1}(\Delta_r^\kappa)$. Observe that when $\kappa=1$, which is the case for the algebras $\mathbb R^n$ and $\mathcal S^n$, $\Delta_r^\kappa$ coincides with the standard simplex 
$$\Delta_r := \{x\in\R^r: x~\geq~0, \:\langle{\bf 1}_r, x\rangle~=~1\}.$$

Incidentally, we observe that $\DX$ is compact. Indeed, it is clear that $\Delta_r^\kappa$ is compact in $\mathbb R^r$. Then, the compactness of $\Delta_r^\kappa$ is transferred to its associated spectral set $\DX$ \cite[Theorem\,27]{Baes}. This result will be used later, so we state it as a proposition.
\begin{proposition}\label{compact}
    The spectraplex $\DX$ is compact in $\mathbb X$.
\end{proposition}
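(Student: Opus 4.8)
The plan is to prove that the spectraplex $\DX$ is compact by transferring compactness from the associated permutation-invariant set $\Delta_r^\kappa$ through the spectral map $\lambda$. Since we work in finite dimensions, it suffices to show that $\DX$ is closed and bounded. The excerpt has already established the key structural fact that $\DX=\lambda^{-1}(\Delta_r^\kappa)$, where $\Delta_r^\kappa=\{\lambda\in\R^r:\lambda\geq 0,\ \langle{\bf 1}_r,\lambda\rangle=1/\kappa\}$ is a scaled simplex, manifestly compact in $\R^r$. The invoked reference \cite[Theorem\,27]{Baes} is precisely the statement that a spectral set $\lambda^{-1}(\mathcal Q)$ is compact whenever its underlying permutation-invariant set $\mathcal Q$ is compact, so the cleanest route is to cite this result directly.

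First I would record that $\Delta_r^\kappa$ is compact: it is closed (being the intersection of the closed nonnegative orthant with the closed affine hyperplane $\langle{\bf 1}_r,\lambda\rangle=1/\kappa$) and bounded (each coordinate satisfies $0\leq\lambda_i\leq 1/\kappa$ on this set). Next I would recall that $\Delta_r^\kappa$ is permutation invariant, which is immediate since both defining conditions $\lambda\geq 0$ and $\langle{\bf 1}_r,\lambda\rangle=1/\kappa$ are symmetric under reordering of coordinates. These two facts place us exactly in the hypotheses of \cite[Theorem\,27]{Baes}.

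If one preferred a self-contained argument rather than a citation, the approach would be to establish the two properties separately. For boundedness, I would note that the map $x\mapsto\lambda(x)$ is norm-preserving in the appropriate sense: under scalarizability, $\Vert x\Vert^2=\langle x,x\rangle=\kappa\,{\rm tr}(x\circ x)=\kappa\sum_i\lambda_i(x)^2=\kappa\Vert\lambda(x)\Vert^2$, so any bound on $\lambda(x)$ yields a bound on $\Vert x\Vert$; since $\lambda(x)\in\Delta_r^\kappa$ forces $\Vert\lambda(x)\Vert\leq 1/\kappa$, the set $\DX$ is bounded. For closedness, I would use that $\DX=\KX\cap\{x:\langle\eX,x\rangle=1\}$ is the intersection of the closed cone $\KX$ with a closed affine hyperplane, hence closed.

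The main obstacle, and the reason the citation is the preferred path, lies in the closedness argument via the eigenvalue map: continuity of $x\mapsto\lambda(x)$ (with eigenvalues sorted in nonincreasing order) is a nontrivial fact about Euclidean Jordan algebras that would itself require justification, so invoking \cite[Theorem\,27]{Baes}, which packages exactly this transfer of compactness from $\mathcal Q$ to $\lambda^{-1}(\mathcal Q)$, is both rigorous and economical. I therefore expect the proof to be a short paragraph: verify that $\Delta_r^\kappa$ is compact and permutation invariant, then apply the cited theorem to conclude that $\DX=\lambda^{-1}(\Delta_r^\kappa)$ is compact.
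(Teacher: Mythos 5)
Your proposal is correct and follows essentially the same route as the paper: observe that $\Delta_r^\kappa$ is compact in $\mathbb R^r$ and then invoke \cite[Theorem\,27]{Baes} to transfer compactness to the spectral set $\DX=\lambda^{-1}(\Delta_r^\kappa)$. The extra remarks you add (permutation invariance of $\Delta_r^\kappa$ and the sketch of a self-contained closed-and-bounded argument) are sound but not needed beyond what the paper itself does.
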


The next proposition shows that the computation of the orthogonal projection onto $\DX$ is reduced to the computation of the orthogonal projection onto $\Delta_r^\kappa$ which is easier to handle.
\begin{proposition}\label{prop:proj}
Let $\mathbb X$ be a scalarizable EJA with rank $r$ and scaling factor $\kappa$. Let $b\in\mathbb X$ with spectral decomposition $b=\sum_{i=1}^r\lambda_i(b)c_i$. Then, $\bx=\sum_{i=1}^r\bar\xi_i c_i$ is the solution of
\begin{equation}\label{projection}
\begin{array}{ll}
\min&\displaystyle \frac{1}{2}\Vert x-b\Vert^2\\
\mbox{s.t.}&x\in \DX,
\end{array}
\end{equation} 
where $\bar\xi:=(\bar\xi_1,\ldots,\bar\xi_r)^\top$ is the solution of
\begin{equation}\label{projection2}
\begin{array}{ll}
\min&\displaystyle \frac{1}{2}\Vert \xi-\lambda(b)\Vert^2\\
\mbox{s.t.}&\xi\in \Delta_r^\kappa.
\end{array}
\end{equation}
\end{proposition}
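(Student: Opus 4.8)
The plan is to exploit the fact that $\DX$ is a spectral set, $\DX = \lambda^{-1}(\Delta_r^\kappa)$, and to reduce the projection onto $\DX$ to the projection of the eigenvalue vector $\lambda(b)$ onto the permutation-invariant set $\Delta_r^\kappa$. The key structural input is scalarizability: since $\langle x, y\rangle = \kappa\,{\rm tr}(x \circ y)$, the EJA inner product is compatible with the spectral decomposition, and in particular the trace inner product behaves well under common Jordan frames. The strategy is to first expand the squared objective $\frac12\|x - b\|^2$ and argue that an optimal $\bx$ can be taken to share a Jordan frame with $b$; once that is established, the problem decouples completely into the coordinate problem \eqref{projection2}.

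First I would use a von~Neumann--type (or Fan) trace inequality for Euclidean Jordan algebras, namely that for any $x, b \in \mathbb X$ one has $\langle x, b\rangle = \kappa\,{\rm tr}(x\circ b) \leq \kappa\,\langle \lambda(x), \lambda(b)\rangle$, with equality when $x$ and $b$ are simultaneously decomposed in a common Jordan frame (this is the EJA analogue of the matrix inequality ${\rm Tr}(XB) \leq \langle \lambda(X), \lambda(B)\rangle$; it holds in any EJA and I would cite it from the spectral theory, e.g.\ the references on spectral sets). Expanding,
\[
\tfrac12\|x - b\|^2 = \tfrac12\|x\|^2 - \langle x, b\rangle + \tfrac12\|b\|^2
= \tfrac{\kappa}{2}\|\lambda(x)\|^2 - \langle x, b\rangle + \tfrac{\kappa}{2}\|\lambda(b)\|^2,
\]
where I used that $\|x\|^2 = \kappa\,{\rm tr}(x\circ x) = \kappa\|\lambda(x)\|^2$ (and likewise for $b$), since eigenvalues are orthogonal-invariant in the trace metric. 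The first and third terms depend on $x$ only through $\lambda(x)$, while the middle term is bounded below by $-\kappa\langle\lambda(x),\lambda(b)\rangle$. Because membership $x\in\DX$ is equivalent to $\lambda(x)\in\Delta_r^\kappa$ (established in the excerpt), any feasible $x$ satisfies
\[
\tfrac12\|x-b\|^2 \;\geq\; \tfrac{\kappa}{2}\|\lambda(x) - \lambda(b)\|^2 \;\geq\; \tfrac{\kappa}{2}\|\bar\xi - \lambda(b)\|^2,
\]
the last step by optimality of $\bar\xi$ in \eqref{projection2}. This gives a lower bound on the objective in \eqref{projection}.

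Next I would verify that the candidate $\bx = \sum_{i=1}^r \bar\xi_i c_i$, built using the Jordan frame $\{c_1,\ldots,c_r\}$ of $b$, attains this lower bound. For this choice $x$ and $b$ share a frame, so the trace inequality is tight, $\langle \bx, b\rangle = \kappa\langle\bar\xi,\lambda(b)\rangle$, and a direct computation gives $\frac12\|\bx - b\|^2 = \frac{\kappa}{2}\|\bar\xi - \lambda(b)\|^2$. Here I must be mildly careful: $\bar\xi$ solves \eqref{projection2} over the \emph{permutation-invariant} set $\Delta_r^\kappa$, and since $\lambda(b)$ has nonincreasing entries, the minimizer $\bar\xi$ inherits the same ordering (the projection of a sorted vector onto a permutation-invariant convex set is sorted the same way), so that $\lambda(\bx) = \bar\xi$ and $\bx$ is genuinely feasible, $\bx\in\DX$. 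Combining the lower bound with attainment shows $\bx$ is optimal; uniqueness follows from strict convexity of the projection objective. \textbf{The main obstacle} I anticipate is the rigorous justification of the trace inequality and its equality case in the EJA setting---that the spectral lower bound is achieved precisely (and only) by frames compatible with $b$---together with the monotone-ordering argument guaranteeing $\lambda(\bx)=\bar\xi$; the remaining algebra is routine once scalarizability is invoked to convert norms and inner products into their eigenvalue counterparts.
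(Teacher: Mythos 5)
Your proof is correct, but it follows a genuinely different route from the paper's. The paper disposes of this proposition in one line by invoking the \emph{commutation principle} for variational problems on Euclidean Jordan algebras (citing the analogous Proposition 10 of \cite{RSS}): any minimizer $\bx$ of \eqref{projection} must operator-commute with $\nabla\left(\tfrac12\Vert\cdot-b\Vert^2\right)(\bx)=\bx-b$, hence with $b$, so $\bx$ and $b$ share a Jordan frame and the problem collapses to \eqref{projection2}. You instead prove a global lower bound via a Fan--Theobald--von Neumann trace inequality for EJAs, $\langle x,b\rangle\leq\kappa\langle\lambda(x),\lambda(b)\rangle$, obtaining $\tfrac12\Vert x-b\Vert^2\geq\tfrac{\kappa}{2}\Vert\lambda(x)-\lambda(b)\Vert^2\geq\tfrac{\kappa}{2}\Vert\bar\xi-\lambda(b)\Vert^2$ for every feasible $x$, and then verify by direct computation in the frame of $b$ that the candidate $\bx=\sum_i\bar\xi_i c_i$ attains this bound. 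Both are sound; the trace inequality you need is indeed a known EJA result, so your citation plan is legitimate. What each buys: the commutation principle argument is shorter and extends verbatim to arbitrary spectral sets and non-quadratic smooth objectives, whereas your argument is more self-contained (no variational principle needed), certifies \emph{global} optimality directly, and gives uniqueness for free from strict convexity. Two small remarks on your write-up: the ordering argument you flag as an obstacle is actually superfluous---feasibility of $\bx$ only requires $\lambda(\bx)\in\Delta_r^\kappa$, which holds because $\Delta_r^\kappa$ is permutation invariant and $\lambda(\bx)$ is a rearrangement of $\bar\xi$; and you never need the equality \emph{case} of the trace inequality either, since the direct computation $\tfrac12\Vert\bx-b\Vert^2=\tfrac{\kappa}{2}\Vert\bar\xi-\lambda(b)\Vert^2$ (using $c_i\circ c_j=0$ for $i\neq j$ and ${\rm tr}(c_i)=1$) already establishes attainment.
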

\begin{proof}
The proof is analogous to the proof of \cite[Proposition\,10]{RSS} where the main tool  is the commutation principle in EJA developed in \cite{RSS}.   
\end{proof}
\begin{example}[Projections]\label{ex:pyeccion}\mbox{ }
    \begin{enumerate}[label=(\alph*)]
        \item \label{exa} For the EJA $\mathbb R^n$,  problems \eqref{projection} and \eqref{projection2} are the same. Then, the projection of $b\in\mathbb R^n$ onto the spectraplex of $\mathbb R^n$ is nothing but the projection of $b$ onto the standard simplex $\Delta_n$. That is, the solution to the problem:
        \begin{equation}\label{ex1}
\begin{array}{ll}
\min&\displaystyle \frac{1}{2}\Vert x-b\Vert^2\\
\mbox{s.t.}&x\geq 0,\;\langle{\bf 1}_n,x\rangle=1.
\end{array}
\end{equation} 
The solution $\bar x$ of \eqref{ex1} can be characterized as  
$\bar x_i=\max\{0,b_i+\mu\}\quad\mbox{for all $i=1,\ldots,n$},$
where $\mu\in\mathbb R$ is a solution of $\sum_{i=1}^n\max\{0,b_i+\mu\}=1.$ 
As shown in \cite{Cond}, the point $\bar x$ can be computed efficiently and exactly by specialized algorithms.
\item \label{exb} For the Jordan spin algebra $\mathbb R^{n-1}\times \mathbb R$, \eqref{projection2} is formulated in $\mathbb R^2$ (because the rank of the algebra is $2$) and the solution can be calculated as follows: for $b=(\tilde b,b_0)\in \mathbb R^{n-1}\times \mathbb R$, the solution of \eqref{projection2} is
\begin{equation}\label{solspin}
    (\bar{\xi},\bar{t})=\left(\frac{\tilde b}{\max\{1,\Vert \tilde  b\Vert\}},1\right).
\end{equation}
As a remark for this case, if we work directly \eqref{projection}, we can deduce that $(\bar{\xi},\bar{t})$ is the solution of \eqref{projection} if and only if $\bar{t}=1$ and $\bar{\xi}$ is the orthogonal projection of $\tilde b$ onto the unit ball of $\mathbb R^{n-1}$. That is, $\bar\xi $ is the solution of 
\begin{equation}\nonumber
\begin{array}{ll}
\min &  \frac{1}{2} \| \xi - \tilde b \|^2  \\
s.t.& \|\xi \|^2 \leq 1.
\end{array}
\end{equation}
Thus, $(\bar{\xi},\bar{t})$ coincides with \eqref{solspin}.
 \item \label{exc} For the EJA $\mathcal S^n$, we have that for $B\in\mathcal S^n$,  problem \eqref{projection} becomes
 \begin{equation}\label{projsym}
\begin{array}{ll}
\min&\displaystyle \frac{1}{2}\Vert X-B\Vert^2\\
\mbox{s.t.}&X\in\mathcal P_n,\\
&{\rm Tr}(X)=1.
\end{array}
\end{equation} 
From Proposition\,\ref{prop:proj}, if $B$ has the spectral decomposition $B=\sum_{i=1}^n\lambda_i(B)u_iu_i^T$, where $\{u_1,\ldots,u_n\}$ is an orthonormal basis of $\mathbb R^n$ of eigenvectors of $B$, then $\bar X=\sum_{i=1}^n\bar \xi_iu_iu_i^T$ is the solution of \eqref{projsym}, where $\bar\xi$ is the solution of \eqref{ex1} when $b=\lambda(B)$.
    \end{enumerate}
\end{example}

\section{Critical angles between LISC cones}\label{sec:criang}

Let $P,Q\in\mathcal C(\mathbb V)$ be LISC cones represented by $P=G(K_\mathbb X)$ and $Q=H(K_{\mathbb Y})$, respectively. With these representations, the cosinus of the maximal angle between $P$ and $Q$, formulated in \eqref{maximal}, becomes
\begin{equation}\label{maxpol}
\left\{\begin{array}{ll}
\min&\displaystyle \langle Gx,Hy\rangle\\
\mbox{s.t.}&x\in K_{\mathbb X},\;\Vert Gx\Vert=1,\\
&y\in K_{\mathbb Y},\;\Vert Hy\Vert=1.
\end{array}\right.
\end{equation}
\begin{lemma}
 The pair $(\bx,\by)$ is a stationary point of \eqref{maxpol}  if and only if
\begin{equation}\label{stat1}
\left\{
    \renewcommand{\arraystretch}{1.5}
 \begin{array}{l}
\KX\ni \bar x\;\perp\; \left(G^\top H\by -\langle G\bx,H\by\rangle G^\top G\bx\right)\in\KX,\\
\KY\ni \bar y\;\perp\;\left(H^\top  G\bx-\langle G\bx,H\by\rangle H^\top H\by\right)\in\KY,\\
\Vert G\bx\Vert=1,\;\Vert H\by\Vert=1.
\end{array}
\right.
\end{equation}
\end{lemma}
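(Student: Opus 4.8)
The plan is to recognize problem~\eqref{maxpol} as a concrete instance of the general model~\eqref{model} and then apply the stationarity characterization of Definition~\ref{def:stationary} (equivalently, Theorem~\ref{theo:solstat}). Concretely, I would set $\Psi(x,y):=\langle Gx,Hy\rangle$, $\phi_1(x):=\Vert Gx\Vert-1$ and $\phi_2(y):=\Vert Hy\Vert-1$, all differentiable on the feasible set (indeed $\phi_1,\phi_2$ are differentiable there precisely because $\Vert Gx\Vert=\Vert Hy\Vert=1$ keeps the norms away from zero). The first routine step is to compute the gradients: writing $\langle Gx,Hy\rangle=\langle x,G^\top Hy\rangle=\langle y,H^\top Gx\rangle$ gives $\nabla_x\Psi(x,y)=G^\top Hy$ and $\nabla_y\Psi(x,y)=H^\top Gx$, while the chain rule yields $\nabla\phi_1(x)=G^\top Gx/\Vert Gx\Vert$ and $\nabla\phi_2(y)=H^\top Hy/\Vert Hy\Vert$. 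Evaluating at a feasible $(\bx,\by)$, these simplify to $\nabla\phi_1(\bx)=G^\top G\bx$ and $\nabla\phi_2(\by)=H^\top H\by$.

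Next I would check the constraint qualification guaranteeing existence of the multipliers $\gamma_1,\gamma_2$ in~\eqref{crit2}, so that the notion of stationarity is unambiguous. Since $\langle\nabla\phi_1(\bx),\bx\rangle=\langle G\bx,G\bx\rangle=\Vert G\bx\Vert^2=1\neq 0$, we have $\nabla\phi_1(\bx)\neq 0$, and likewise $\nabla\phi_2(\by)\neq 0$; hence $(\nabla\phi_1(\bx),\nabla\phi_2(\by))\neq(0,0)$ and LICQ holds as in Theorem~\ref{theo:solstat}. Substituting the gradients into~\eqref{crit2} turns the stationarity system into
\[
\KX\ni\bx\perp\bigl(G^\top H\by+\gamma_1 G^\top G\bx\bigr)\in\KX^\ast,\quad
\KY\ni\by\perp\bigl(H^\top G\bx+\gamma_2 H^\top H\by\bigr)\in\KY^\ast,
\]
together with $\Vert G\bx\Vert=\Vert H\by\Vert=1$.

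It then remains to identify the multipliers, mimicking the computation in the Corollary. Taking the inner product of the first membership with $\bx$ and using the orthogonality $\bx\perp(\cdots)$ gives $0=\langle\bx,G^\top H\by\rangle+\gamma_1\langle\bx,G^\top G\bx\rangle=\langle G\bx,H\by\rangle+\gamma_1\Vert G\bx\Vert^2$, whence $\gamma_1=-\langle G\bx,H\by\rangle$; the analogous argument yields $\gamma_2=-\langle G\bx,H\by\rangle$. Plugging these values back in recovers exactly the vectors appearing in~\eqref{stat1}.

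The final, conceptually key, step is to replace the dual cones by the primal ones. Because $\KX$ and $\KY$ are symmetric cones they are self-dual, i.e. $\KX^\ast=\KX$ and $\KY^\ast=\KY$, and this is precisely what allows the memberships $(\cdots)\in\KX^\ast$, $(\cdots)\in\KY^\ast$ to be rewritten as $(\cdots)\in\KX$, $(\cdots)\in\KY$, producing~\eqref{stat1} verbatim. I do not expect a genuine obstacle here: the only points requiring care are the differentiation of $\phi_1,\phi_2$ (clean thanks to the normalization) and the invocation of self-duality, so the argument is essentially a specialization of Theorem~\ref{theo:solstat} together with the Corollary's multiplier computation. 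Both implications of the stated equivalence are obtained at once, since every manipulation above is reversible: for the converse one simply sets $\gamma_1=\gamma_2=-\langle G\bx,H\by\rangle$ and reads~\eqref{stat1} as an instance of~\eqref{crit2}.
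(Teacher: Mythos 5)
Your proposal is correct and follows essentially the same route as the paper: compute the gradients of $\Psi(x,y)=\langle Gx,Hy\rangle$, $\phi_1(x)=\Vert Gx\Vert-1$, $\phi_2(y)=\Vert Hy\Vert-1$, use the orthogonality conditions in~\eqref{crit2} to pin down $\gamma_1=\gamma_2=-\langle G\bx,H\by\rangle$, and invoke self-duality of $\KX$ and $\KY$ to pass from $\KX^\ast,\KY^\ast$ to $\KX,\KY$. The only cosmetic difference is your explicit LICQ check, which is harmless but not needed here, since Definition~\ref{def:stationary} already postulates the existence of the multipliers rather than deriving it from a constraint qualification.
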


\begin{proof}
Observe that for all $(x,y)$ feasible to problem~\eqref{maxpol}, the gradients of  $\Psi(x,y)=\langle Gx,Hy\rangle$, $\phi_1(x):=\Vert Gx\Vert-1$ and $\phi_2(y):=\Vert Hy\Vert-1$  are all well defined and  given by
    $$\nabla\Psi(x,y)=\left(G^\top Hy,H^\top G x\right),\;\nabla \phi_1(x)=\frac{G^\top G x}{\Vert G x\Vert}\;\mbox{and}\;\nabla \phi_2(y)=\frac{H^\top H y}{\Vert H y\Vert}.$$
It follows from~\eqref{crit2} that 
\begin{align*}
    0&=\inner{\bx}{\nabla_x \Psi(\bx,\by)}+\gamma_1\inner{\bx }{\nabla \phi_2(\bx)}=
\inner{G\bx}{H\by}+\gamma_1\norm{G\bx}=\inner{G\bx}{H\by}+\gamma_1\\
0&=\inner{\by}{\nabla_y \Psi(\bx,\by)}+\gamma_2\inner{\by }{\nabla \phi_2(\by)}=
\inner{G\bx}{H\by}+\gamma_2\norm{H\by}=\inner{G\bx}{H\by}+\gamma_2\\
\end{align*}
i.e., $\gamma_1=\gamma_2=-\inner{G\bx}{H\by}$.
The result then follows from~\eqref{crit2} by recalling that
$\KX$ and $\KY$ are self-dual, then $\KX^\ast=\KX$ and $\KY^\ast=\KY$.

\end{proof}

\begin{proposition}\label{prop:link(x,y)(u,v)}
If $(\bx,\by)$ is a stationary point (resp., solution) of \eqref{maxpol}, then $(G\bx,H\by)$ is a critical (resp., antipodal) pair of $(P,Q)$. Conversely, if $(u,v)$ is a critical (resp., antipodal) pair of $(P,Q)$, then there exists a stationary point (resp., solution) $(\bx,\by)$ of \eqref{maxpol} such that $u=G\bx$ and $v=H\by$. 
\end{proposition}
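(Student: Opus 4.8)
The plan is to reduce the whole statement to a single duality identity relating the dual of a LISC cone to the self-dual symmetric cone that generates it, after which both directions of the main equivalence become literal translations between \eqref{crit} and \eqref{stat1}. First I would record the key fact: since $P=G(\KX)$ with $\KX$ self-dual, a vector $\zeta$ lies in $P^\ast$ if and only if $G^\top\zeta\in\KX$. This is immediate from the definition of the dual cone, because $\zeta\in P^\ast$ means $\inner{\zeta}{Gx}=\inner{G^\top\zeta}{x}\geq 0$ for all $x\in\KX$, i.e.\ $G^\top\zeta\in\KX^\ast=\KX$; the analogous identity holds for $Q=H(\KY)$ with $H^\top$.

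With this in hand, the implication ``stationary $\Rightarrow$ critical'' is pure substitution. Given $(\bx,\by)$ satisfying \eqref{stat1}, I set $u:=G\bx$ and $v:=H\by$. From $\bx\in\KX$ and $\by\in\KY$ we get $u\in P$, $v\in Q$, and the last line of \eqref{stat1} gives $\norm{u}=\norm{v}=1$. Rewriting the first membership of \eqref{stat1} as $G^\top(v-\inner{u}{v}u)\in\KX$ and applying the key fact yields $v-\inner{u}{v}u\in P^\ast$; symmetrically the second line gives $u-\inner{u}{v}v\in Q^\ast$. These are exactly the four conditions of \eqref{crit}, so $(G\bx,H\by)$ is a critical pair (the orthogonality relations of \eqref{stat1} are not even needed here and in fact hold automatically once $\norm{G\bx}=\norm{H\by}=1$).

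For the converse I would run the same computation backwards. Given a critical pair $(u,v)$, I use $u\in P=G(\KX)$ and $v\in Q=H(\KY)$ to choose preimages $\bx\in\KX$, $\by\in\KY$ with $G\bx=u$ and $H\by=v$; these are nonzero by Lemma~\ref{lem:lisc} since $\norm{u}=\norm{v}=1$, and the normalizations $\norm{G\bx}=\norm{H\by}=1$ are inherited from \eqref{crit}. The ``only if'' direction of the key fact then turns $v-\inner{u}{v}u\in P^\ast$ and $u-\inner{u}{v}v\in Q^\ast$ into the two membership relations of \eqref{stat1}, while the orthogonality relations follow for free from $\inner{G\bx}{v-\inner{u}{v}u}=\inner{u}{v}(1-\norm{u}^2)=0$ and its counterpart. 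Hence $(\bx,\by)$ is stationary.

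The parenthetical ``solution $\leftrightarrow$ antipodal'' claim I would treat separately by a value-preserving correspondence between feasible sets rather than by optimality conditions. The map $(x,y)\mapsto(Gx,Hy)$ sends the feasible set of \eqref{maxpol} onto $S_P\times S_Q$ (surjectivity being exactly that every $u\in S_P$ has a preimage $x\in\KX$ with $\norm{Gx}=1$) and preserves the objective, since $\inner{Gx}{Hy}$ equals the objective of \eqref{maximal} at the image; therefore the two optimal values coincide and a feasible point minimizes \eqref{maxpol} precisely when its image minimizes \eqref{maximal}, i.e.\ is an antipodal pair. The only genuine content is the duality identity together with the self-duality $\KX^\ast=\KX$; the one place to be careful is the converse lift, where assumptions \ref{assump1}--\ref{assump2}, used through Lemma~\ref{lem:lisc}, are essential to guarantee nonzero preimages and keep the normalization meaningful.
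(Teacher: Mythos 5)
Your proof is correct and takes essentially the same route as the paper's: your ``key fact'' $\zeta\in P^\ast \Leftrightarrow G^\top\zeta\in\KX$ is precisely the equivalence the paper establishes in its item (ii), the substitution arguments in both directions match, and your value-preserving correspondence between the two feasible sets is exactly the paper's identity \eqref{feaseq} used for the solution/antipodal part. The only difference is presentational: you spell out that the orthogonality relations hold automatically once the unit-norm conditions are in force, a point the paper leaves implicit (and your citation of Lemma~\ref{lem:lisc} for nonzero preimages is unnecessary, since $\bx=0$ would force $G\bx=0\neq u$ by linearity alone, but this is harmless).
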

\begin{proof}

Let $(\bx,\by)$ be a stationary point of \eqref{maxpol}, and set
$u=G\bx$ and $v=H\by$. It follows from~\eqref{stat1}, and the fact that  $\KX^\ast=\KX$ and $\KY^\ast=\KY$, that
\begin{equation*}
\left\{
    \renewcommand{\arraystretch}{1.5}
 \begin{array}{l}
\KX\ni \bar x\;\perp\; G^\top \left(v -\langle u,v\rangle  u\right)\in\KX^\ast,\\
\KY\ni \bar y\;\perp\;H^\top  \left(u-\langle u,v\rangle  v\right)\in\KY^\ast,\\
\Vert u\Vert=1,\;\Vert v\Vert=1.
\end{array}
\right.
\end{equation*}
By definition of $P$ and $Q$, the conditions $\bx\in \KX$ and $\by\in \KY$ imply that $u=G\bx\in P$ and $v=H\by\in Q$, respectively. These facts and the last line of the above system mean that the first two lines of  the definition \eqref{crit} of critical angles of $(P,Q)$ are satisfied. Moreover, note that (i) $\bar x\;\perp\; G^\top \left(v -\langle u,v\rangle  u\right)$ is equivalent to
$G\bar x\;\perp\; v -\langle u,v\rangle  u$,   and (ii)
\begin{align*}
 G^\top \left(v -\langle u,v\rangle  u\right)\in\KX^\ast &\Leftrightarrow 
 \inner{G^\top \left(v -\langle u,v\rangle  u\right)}{x}\geq 0\; \forall x \in \KX,\\
 &\Leftrightarrow 
 \inner{ v -\langle u,v\rangle  u}{Gx}\geq 0\; \forall x \in \KX,\\
  &\Leftrightarrow 
 \inner{ v -\langle u,v\rangle  u}{p}\geq 0\; \forall p \in P,\\
 &\Leftrightarrow  v -\langle u,v\rangle  u \in P^\ast.
\end{align*}
Therefore, items (i) and (ii) assert that the first line in the above system is equivalent to $P\ni u \;\perp\; v -\langle u,v\rangle  u \in P^\ast$.
Analogously, the second line in the system reads as $Q\ni v \;\perp\; u -\langle u,v\rangle  v \in Q^\ast$.
Hence, $(u,v)$ satisfies \eqref{crit} which means that $(u,v)$ is a critical pair of $(P,Q)$. 

Conversely, let $(u,v)$ be a critical pair of $(P,Q)$. Then, $u\in P=G(\KX)$ and $v\in Q=H(\KX)$ which mean that there exist $\bx\in\KX$ and $\by\in\KX$ such that $u=G\bx$ and $v=H\by$. By replacing $(u,v)=(G\bx,H\by)$ in \eqref{crit} and by using items (i) and (ii), described above, we deduce that $(\bx,\by)$ satisfies \eqref{stat1}. Thus, $(\bx,\by)$ is a stationary point of \eqref{maxpol}.

Now, let $C_1\times C_2$ be the feasible set of \eqref{maxpol} where $C_1:=\{x:x\in\KX,\,\Vert Gx\Vert=1\}$ and $C_2:=\{y:y\in\KY,\,\Vert Hy\Vert=1\}$. It is straightforward to see that the feasible sets of \eqref{maximal} and \eqref{maxpol}  are related by
\begin{equation}\label{feaseq}
    \{(Gx,Hy):(x,y)\in C_1\times C_2\}=\{(u,v):u\in P,\,\Vert u\Vert=1,v\in Q,\,\Vert v\Vert=1\}.
\end{equation}
Hence, if $(\bx,\by)$ is a solution of \eqref{maxpol} then $\langle G\bx,H\by\rangle\leq \langle G x,H y\rangle$ for all $(x,y)\in C_1\times C_2$. From \eqref{feaseq} we deduce that $(\bar u,\bar v):=(G\bx,H\by)$ is a solution of \eqref{maximal}. Then, $(\bar u,\bar v)$ is an antipodal pair of $(P,Q)$. Conversely, if $(\bar u,\bar v)$ is an antipodal pair of $(P,Q)$ then there exist $(\bx,\by)\in C_1\times C_2$ such that $(G\bx,H \by)=(\bar u,\bar v)$ satisfying $\langle G\bx,H\by\rangle\leq \langle u, v\rangle$ for all  feasible point $(u,v)$ of \eqref{maximal}. From \eqref{feaseq} we conclude that $(\bx,\by)$ is a solution of \eqref{maxpol}.

\end{proof}

The numerical resolution of problem \eqref{maxpol} is not an easy task since it has as feasible set 
$$\{x\in K_\mathbb X:\Vert Gx\Vert=1\}\times  \{y\in K_\mathbb Y:\Vert Hy\Vert=1\},$$
  which is not convex due to its spherical conditions. Thus, we need to reformulate \eqref{maxpol} in such a way that its feasible set can be more conveniently manipulable.  Our reformulation will be a minimization problem with feasible set $\Delta_{\mathbb X,\mathbb Y}=\DX\times \Delta_{\mathbb Y}$. Recall that  $\DX$ and $\Delta_{\mathbb Y}$ are the spectraplexes of $\mathbb X$ and $\mathbb Y$ which are given respectively by
 \begin{equation*}
 \Delta_{\mathbb X}=\{x\in K_\mathbb X:\langle e_\mathbb X,x\rangle=1\}\quad\mbox{and}\quad \Delta_{\mathbb Y}= \{y\in K_\mathbb Y:\langle e_\mathbb Y,y\rangle=1\}.
 \end{equation*}
  The set $\Delta_{\mathbb X,\mathbb Y}$ turns out to be convex since we have hyperplanes instead of spheres. As $Gx\neq 0$ and $Hy\neq 0$ for all $(x,y)\in \Delta_{\mathbb X,\mathbb Y}$, due to Lemma~\ref{lem:lisc},   Proposition~\ref{prop:link(x,y)(u,v)} asserts that problem \eqref{maxpol} can be reformulated as~\eqref{maxfrac}, 
fitting thus into the structure of~\eqref{model}.
 We have that $\Phi$ (in~\eqref{maxfrac}) is differentiable in $\Delta_{\mathbb X,\mathbb Y}$. Its gradient is given $\nabla\Phi(x,y)=(\nabla_x\Phi(x,y),\nabla_y\Phi(x,y))$, where
 \begin{subequations}\label{gradphi}
 \begin{eqnarray}
\nabla_x\Phi(x,y)&=&\frac{G^\top H y}{\Vert Gx\Vert \Vert Hy\Vert}-\frac{\langle Gx,Hy\rangle}{\left\|Gx\right\| \left\|Hy\right\|}\frac{G^\top G x}{\Vert Gx\Vert^2},\\
\nabla_y\Phi(x,y)&=&\frac{H^\top G x}{\Vert Gx\Vert \Vert Hy\Vert}-\frac{\langle Gx,Hy\rangle}{\left\|Gx\right\| \left\|Hy\right\|}\frac{H^\top H y}{\Vert Hy\Vert^2}.
\end{eqnarray}
\end{subequations}

\begin{lemma}
 The pair $(\bx,\by)$ is a stationary point of \eqref{maxfrac}  if and only if
\begin{equation}\label{stat2}
\left\{
    \renewcommand{\arraystretch}{2.3}
 \begin{array}{l}
\displaystyle\KX\ni \bx\;\perp\; \left(\frac{G^\top H\by}{ \Vert H\by\Vert} -\frac{\langle G\bx,H\by\rangle}{\Vert G\bx\Vert \Vert H\by\Vert} \frac{G^\top G\bx}{\Vert G\bx\Vert}\right)\in\KX,\\
\displaystyle\KY\ni \bar y\;\perp\;\left(\frac{H^\top G\bx}{ \Vert G\bx\Vert} -\frac{\langle G\bx,H\by\rangle}{\Vert G\bx\Vert \Vert H\by\Vert} \frac{H^\top H\by}{\Vert H\by\Vert}\right)\in\KY,\\
\langle e_\mathbb X,\bx\rangle=1,\;\langle e_\mathbb Y,\by\rangle=1.
\end{array}
\right.
\end{equation}
\end{lemma}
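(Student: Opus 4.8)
The plan is to specialize the stationarity definition~\eqref{crit2} to problem~\eqref{maxfrac}, read as the instance of~\eqref{model} with $\Psi=\Phi$, $\phi_1(x)=\inner{\eX}{x}-1$ and $\phi_2(y)=\inner{\eY}{y}-1$. These two constraint functions have constant gradients $\nabla\phi_1(x)=\eX$ and $\nabla\phi_2(y)=\eY$, whereas $\nabla_x\Phi$ and $\nabla_y\Phi$ are supplied explicitly by~\eqref{gradphi}. Since $\KX$ and $\KY$ are symmetric cones, they are self-dual, so $\KX^\ast=\KX$ and $\KY^\ast=\KY$; thus~\eqref{crit2} takes the form $\KX\ni\bx\perp(\nabla_x\Phi(\bx,\by)+\gamma_1\eX)\in\KX$, together with its $y$-counterpart and the feasibility equalities $\inner{\eX}{\bx}=\inner{\eY}{\by}=1$.

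The decisive step is to pin down the multipliers, and here the fractional formulation behaves differently from its counterpart for~\eqref{maxpol}. Following the same device used there, I would take the inner product of the first inclusion with $\bx$: from $\bx\perp(\nabla_x\Phi(\bx,\by)+\gamma_1\eX)$ and $\inner{\eX}{\bx}=1$ one gets $\gamma_1=-\inner{\bx}{\nabla_x\Phi(\bx,\by)}$. A short expansion of~\eqref{gradphi} shows that this inner product vanishes, since $\inner{\bx}{G^\top G\bx}=\norm{G\bx}^2$ forces the two terms to cancel exactly; equivalently, this is Euler's identity for the map $x\mapsto\Phi(x,\by)$, which is positively homogeneous of degree zero. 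Hence $\gamma_1=0$, and by the symmetric argument $\gamma_2=0$, so both multiplier terms disappear and the stationarity system reduces to $\KX\ni\bx\perp\nabla_x\Phi(\bx,\by)\in\KX$ and its $y$-analogue.

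It then remains to match this reduced system with~\eqref{stat2}. I would observe that the bracketed vector in the first line of~\eqref{stat2} is exactly $\norm{G\bx}\,\nabla_x\Phi(\bx,\by)$, obtained by multiplying the expression in~\eqref{gradphi} by the scalar $\norm{G\bx}$, and likewise the $y$-bracket equals $\norm{H\by}\,\nabla_y\Phi(\bx,\by)$. By Lemma~\ref{lem:lisc} we have $G\bx\neq 0$ and $H\by\neq 0$, so these scalars are strictly positive; scaling a vector by a positive number leaves both its membership in $\KX$ and its orthogonality to $\bx$ intact. Consequently the complementarity inclusions are equivalent to those in~\eqref{stat2}, completing the proof. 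I do not anticipate a genuine obstacle: the one point demanding care is verifying $\inner{\bx}{\nabla_x\Phi(\bx,\by)}=0$ rather than transplanting the nonzero multiplier from the~\eqref{maxpol} lemma—this vanishing is precisely what lets the fractional model shed its multipliers, and it must be confirmed honestly, either by direct computation or via the degree-zero homogeneity.
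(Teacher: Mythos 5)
Your proposal is correct and follows essentially the same route as the paper's proof: specialize the stationarity system~\eqref{crit2} with constant constraint gradients $\eX,\eY$, use self-duality of $\KX,\KY$, kill the multipliers via the orthogonality relation combined with $\langle\bx,\nabla_x\Phi(\bx,\by)\rangle=0$, and then rescale by the positive factors $\Vert G\bx\Vert$ and $\Vert H\by\Vert$ to recover~\eqref{stat2}. Your explicit verification of the Euler-identity cancellation is in fact a detail the paper leaves implicit (it asserts $\gamma_1\langle\eX,\bx\rangle=0$ directly from orthogonality), so your write-up is, if anything, slightly more complete.
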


\begin{proof}
    It follows from Definition\,\ref{def:stationary} and the equations in~\eqref{gradphi} that $(\bx,\by)$ is a stationary point of \eqref{maxfrac}  if, and only if, there exist $\gamma_1,\gamma_2\in\mathbb R$ such that
    \begin{align}
        &\displaystyle\KX\ni \bx\;\perp\; \left(\frac{G^\top H\by}{ \Vert G\bx\Vert \Vert H\by\Vert} -\frac{\langle G\bx,H\by\rangle}{\Vert G\bx\Vert \Vert H\by\Vert} \frac{G^\top G\bx}{\Vert G\bx\Vert^2}+\gamma_1 e_{\mathbb X}\right)\in\KX^\ast,\label{rel1}\\
&\displaystyle\KY\ni \bar y\;\perp\;\left(\frac{H^\top G\bx}{ \Vert G\bx\Vert \Vert H\by\Vert} -\frac{\langle G\bx,H\by\rangle}{\Vert G\bx\Vert \Vert H\by\Vert} \frac{H^\top H\by}{\Vert H\by\Vert^2}+\gamma_2 e_{\mathbb Y}\right)\in\KY^\ast,\label{rel2}\\
&\langle e_\mathbb X,\bx\rangle=1,\;\langle e_\mathbb Y,\by\rangle=1.\label{rel3}
    \end{align}
    Next we show that $\gamma_1=\gamma_2=0$. From the orthogonality conditions \eqref{rel1} and \eqref{rel2} we have that $\gamma_1\langle e_\mathbb X,\bx\rangle=0$ and $\gamma_2\langle e_\mathbb Y,\by\rangle=0$, respectively. Thus, because of \eqref{rel3} we deduce $\gamma_1=\gamma_2=0$. Finally, the system~\eqref{rel1}-\eqref{rel3} becomes~\eqref{stat2} by recalling that $\KX$ and $\KY$ are self-dual, and by removing w.l.o.g. $1/\norm{G\bx}$ from equation~\eqref{rel1} and $1/\norm{H\by}$ from equation~\eqref{rel2}.
\end{proof}

\begin{theorem}\label{propfrac}
If  $(\bx,\by)$ is a stationary point (resp., solution) of the problem \eqref{maxpol}, then  
$(\tx,\ty):=\left(\frac{\bx}{\langle e_\mathbb X,\bx\rangle},\frac{\by}{\langle e_\mathbb Y,\by\rangle}\right)$
is a stationary point (resp., solution) of \eqref{maxfrac}. Conversely, if $(\tx,\ty)$ is a stationary point (resp., solution) of \eqref{maxfrac}, then $(\bx,\by):=\left(\frac{\tx}{\Vert G\tx\Vert},\frac{\ty}{\Vert H\ty\Vert}\right)$
is a stationary point (resp., solution) of \eqref{maxpol}.
\end{theorem}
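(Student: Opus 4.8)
The plan is to prove Theorem~\ref{propfrac} by exploiting the positive-homogeneity structure that connects the two problems. The key observation is that the map $(x,y)\mapsto(Gx,Hy)$ behaves well under scaling, and both the objective $\Phi$ and the stationarity conditions \eqref{stat1}, \eqref{stat2} are invariant under positive rescaling of $x$ and $y$ separately. Concretely, for any $\alpha,\beta>0$, one has $\Phi(\alpha x,\beta y)=\Phi(x,y)$ because both numerator and denominator scale by $\alpha\beta$; this is the heart of why stationary points and solutions transfer.

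First I would verify the \emph{feasibility} of each transformation. Going from \eqref{maxpol} to \eqref{maxfrac}: if $(\bx,\by)$ is feasible for \eqref{maxpol}, then $\bx\in\KX\setminus\{0\}$ (since $\|G\bx\|=1$ forces $G\bx\neq0$, hence $\bx\neq0$), so by Lemma~\ref{positive} we have $\langle\eX,\bx\rangle>0$ and $\tx:=\bx/\langle\eX,\bx\rangle\in\DX$; similarly $\ty\in\DY$. Conversely, going from \eqref{maxfrac} to \eqref{maxpol}: if $(\tx,\ty)\in\DX\times\DY$ then $\tx,\ty\neq0$, so Lemma~\ref{lem:lisc} guarantees $G\tx\neq0$ and $H\ty\neq0$, making the normalization $\bx:=\tx/\|G\tx\|$ well-defined with $\|G\bx\|=1$. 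This shows both maps land in the correct feasible sets.

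Next I would handle the two regimes. For the \emph{solution} (global optimum) case, the argument is immediate from scale-invariance of $\Phi$ together with the feasible-set correspondence \eqref{feaseq}: since $\Phi(x,y)=\langle Gx,Hy\rangle$ whenever $\|Gx\|=\|Hy\|=1$, and since every feasible point of one problem rescales to a feasible point of the other attaining the same objective value, the optimal values coincide and optima map to optima. For the \emph{stationary point} case, I would directly compare the two characterizations \eqref{stat1} and \eqref{stat2}. The inclusions in \eqref{stat2} differ from those in \eqref{stat1} only by positive scalar factors ($1/\|H\by\|$ on the first term, etc.) and by the rescaling of $\bx,\by$ themselves. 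Since the cones $\KX,\KY$ are invariant under multiplication by positive scalars and the orthogonality relations $\bx\perp(\cdots)$ are likewise scale-insensitive, substituting $\tx=\bx/\langle\eX,\bx\rangle$ into \eqref{stat1} and clearing positive constants should yield exactly \eqref{stat2}, and vice versa.

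\textbf{The main obstacle} will be bookkeeping the positive scalar factors carefully when passing between \eqref{stat1} and \eqref{stat2}: one must track how the norms $\|G\bx\|$, $\|H\by\|$ and the inner product $\langle G\bx,H\by\rangle$ transform under the rescaling, and confirm that the bracketed vectors in the two systems are positive multiples of one another (so that membership in $\KX,\KY$ and the orthogonality with $\bx,\by$ are preserved). The cleanest route is probably to not argue through \eqref{stat1}/\eqref{stat2} at all, but rather to invoke Proposition~\ref{prop:link(x,y)(u,v)} as a bridge: both \eqref{maxpol} and \eqref{maxfrac} have their stationary points characterized via the \emph{same} critical-pair conditions \eqref{crit} on $(u,v)=(G\bx,H\by)=(G\tx/\|G\tx\|,\,\ldots)$, so once I check that the two transformations produce the \emph{same} normalized pair $(u,v)$ up to the unit-norm rescaling, the equivalence follows formally. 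I would present the scale-invariance computation explicitly and then let the critical-pair correspondence do the structural work.
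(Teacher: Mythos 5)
Your feasibility checks (via Lemmas~\ref{positive} and~\ref{lem:lisc}), your treatment of the solution case through scale invariance of $\Phi$ and the rescaling correspondence between the two feasible sets, and your first route for stationarity --- substituting the rescaled points into \eqref{stat1} and clearing positive factors to obtain \eqref{stat2} --- are all correct, and together they reproduce the paper's own proof. In fact the bookkeeping you flag as the main obstacle is lighter than you anticipate: since $\Vert G\bx\Vert=1$, one gets $\Vert G\tx\Vert=\langle \eX,\bx\rangle^{-1}$, hence $G\tx/\Vert G\tx\Vert=G\bx$ and likewise $H\ty/\Vert H\ty\Vert=H\by$; under the substitution the bracketed vectors in \eqref{stat2} are therefore \emph{literally equal} to those in \eqref{stat1}, not merely positive multiples of them, and the orthogonality and cone-membership statements transfer verbatim.

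However, the route you declare ``cleanest'' --- bypassing \eqref{stat1}/\eqref{stat2} and letting Proposition~\ref{prop:link(x,y)(u,v)} do the structural work --- is circular and must be dropped. Proposition~\ref{prop:link(x,y)(u,v)} characterizes the stationary points of \eqref{maxpol} only; nowhere prior to Theorem~\ref{propfrac} is it established that stationary points of \eqref{maxfrac} are characterized by the critical-pair conditions \eqref{crit}. That statement is precisely Corollary~\ref{coro-frac}, which the paper obtains by \emph{combining} Theorem~\ref{propfrac} (the result you are proving) with Proposition~\ref{prop:link(x,y)(u,v)}. So asserting that ``both \eqref{maxpol} and \eqref{maxfrac} have their stationary points characterized via the same critical-pair conditions'' presupposes the theorem. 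To make that bridge legitimate you would need to prove the correspondence between stationary points of \eqref{maxfrac} and critical pairs directly from \eqref{stat2}, which is essentially the same computation you were trying to avoid. Stick with your first route; it is complete and is the paper's argument.
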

\begin{proof}
    Let $(\bx,\by)$ be a stationary point of \eqref{maxpol}. Then \eqref{stat1} holds. From Lemma~\ref{positive} we have that $\langle \eX,\bx\rangle>0$ and $\langle \eY,\by\rangle>0$. Thus, the pair $(\tx,\ty):=\left(\frac{\bx}{\langle e_\mathbb X,\bx\rangle},\frac{\by}{\langle e_\mathbb Y,\by\rangle}\right)$ is well-defined. Observe that because of the unit norm conditions in \eqref{stat1} we get $\Vert G\tx\Vert=\langle \eX,\bx\rangle^{-1}$ and  $\Vert H\ty\Vert=\langle \eY,\by\rangle^{-1}$. Then, $\bx=\frac{\tx}{\Vert G\tx\Vert}$ and $\by=\frac{\ty}{\Vert H\ty\Vert}$. By replacing these values in  \eqref{stat1} we deduce that $(\tx,\ty)$ satisfies \eqref{stat2}. Therefore, $(\tx,\ty)$ is a stationary point of \eqref{maxfrac}. The converse is proved analogously. In this case, when $(\tx,\ty)$ is a stationary point of \eqref{maxfrac}, $G\tx\neq0$ and $H\ty\neq 0$ thanks to Lemma\,\ref{lem:lisc}. Then, $(\bx,\by):=\left(\frac{\tx}{\Vert G\tx\Vert},\frac{\ty}{\Vert H\ty\Vert}\right)$ is well-defined, and it turns out to be a stationary point of \eqref{maxpol}. The details are omitted.

    Now, suppose that $(\bx,\by)$ is a solution of \eqref{maxpol}. Let $(x,y)$ be any feasible point of \eqref{maxfrac}. Then, $\left(\frac{x}{\Vert Gx\Vert},\frac{y}{\Vert Hy\Vert}\right)$ is well-defined and it is a feasible point of \eqref{maxpol}. Therefore,
    \begin{equation*}
    \inner{G\bx}{H\by}\leq\Phi(x,y),
    \end{equation*}
    for all feasible points $(x,y)$ of \eqref{maxfrac}. In particular, $(\tx,\ty):=\left(\frac{\bx}{\langle e_\mathbb X,\bx\rangle},\frac{\by}{\langle e_\mathbb Y,\by\rangle}\right)$ is a feasible point of \eqref{maxfrac} and  it attains the equality in the above displayed inequality. Thus $(\tx,\ty)$ is a solution of \eqref{maxfrac}. The converse is analogous.
    \end{proof}
    The connection between~\eqref{maxpol} and the problem of computing a critical angle between the cones $P=G(\KX)$ and $Q=H(\KY)$ are made explicit in the following corollary.
\begin{corollary}\label{coro-frac}
    If $(\tx,\ty)$ is a stationary point (resp., solution) of  \eqref{maxfrac}, then $(u,v):=\left(\frac{G\tx}{\Vert G\tx\Vert},\frac{H\ty}{\Vert H\ty\Vert}\right)$
    is a critical
(resp., antipodal) pair of $(P,Q)$.  Conversely, if $(u,v)$ is a critical (resp., antipodal) pair of $(P,Q)$, then there exists a stationary point $(\tx,\ty)$ of  \eqref{maxfrac} such that $(u,v)$ is given as above.
\end{corollary}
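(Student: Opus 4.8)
The plan is to obtain Corollary~\ref{coro-frac} simply by composing the two transfer results already established: Theorem~\ref{propfrac}, which links stationary points (resp.\ solutions) of the fractional problem~\eqref{maxfrac} with those of~\eqref{maxpol}, and Proposition~\ref{prop:link(x,y)(u,v)}, which links stationary points (resp.\ solutions) of~\eqref{maxpol} with critical (resp.\ antipodal) pairs of $(P,Q)$. No new computation is really needed; the work is in checking that the explicit normalizations match up.

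For the forward direction, I would start with a stationary point $(\tx,\ty)$ of~\eqref{maxfrac}. By Theorem~\ref{propfrac}, the pair $(\bx,\by):=\left(\frac{\tx}{\Vert G\tx\Vert},\frac{\ty}{\Vert H\ty\Vert}\right)$ is a stationary point of~\eqref{maxpol}; here $G\tx\neq 0$ and $H\ty\neq 0$ by Lemma~\ref{lem:lisc}, so the normalization is well defined. Then Proposition~\ref{prop:link(x,y)(u,v)} tells us that $(G\bx,H\by)$ is a critical pair of $(P,Q)$. The only thing to verify is that $(G\bx,H\by)$ coincides with the claimed pair $(u,v)=\left(\frac{G\tx}{\Vert G\tx\Vert},\frac{H\ty}{\Vert H\ty\Vert}\right)$, which is immediate from linearity of $G$ and $H$: $G\bx=G\!\left(\frac{\tx}{\Vert G\tx\Vert}\right)=\frac{G\tx}{\Vert G\tx\Vert}$, and similarly for $H\by$. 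The parenthetical ``solution'' version runs along the same sentences, invoking the solution-to-solution parts of the same two results.

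For the converse, I would begin with a critical pair $(u,v)$ of $(P,Q)$. Proposition~\ref{prop:link(x,y)(u,v)} produces a stationary point $(\bx,\by)$ of~\eqref{maxpol} with $u=G\bx$ and $v=H\by$. Theorem~\ref{propfrac} then yields the stationary point $(\tx,\ty):=\left(\frac{\bx}{\langle e_\mathbb X,\bx\rangle},\frac{\by}{\langle e_\mathbb Y,\by\rangle}\right)$ of~\eqref{maxfrac}. It remains to confirm that feeding this $(\tx,\ty)$ into the formula of the statement recovers $(u,v)$: since $G\tx=\frac{G\bx}{\langle e_\mathbb X,\bx\rangle}$ is a positive multiple of $u=G\bx$ and $\Vert u\Vert=1$ (recall $(u,v)$ satisfies~\eqref{crit}), we get $\frac{G\tx}{\Vert G\tx\Vert}=\frac{G\bx}{\Vert G\bx\Vert}=u$, and symmetrically $\frac{H\ty}{\Vert H\ty\Vert}=v$.

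The proof is therefore essentially a bookkeeping exercise, and the ``main obstacle'' is only the mild one of keeping the two scalings consistent: Theorem~\ref{propfrac} normalizes by $\langle e_\mathbb X,\cdot\rangle$ when passing to~\eqref{maxfrac} and by $\Vert G\cdot\Vert$ when passing to~\eqref{maxpol}, whereas Corollary~\ref{coro-frac} normalizes the \emph{image} by $\Vert G\tx\Vert$. I would make explicit the commutation $G\!\left(\frac{\tx}{\Vert G\tx\Vert}\right)=\frac{G\tx}{\Vert G\tx\Vert}$ (and its $H$ analogue) so that the two normalizations visibly agree, and note that all the denominators appearing are strictly positive by Lemma~\ref{positive} and Lemma~\ref{lem:lisc}, guaranteeing every quotient is well defined. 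Because both Theorem~\ref{propfrac} and Proposition~\ref{prop:link(x,y)(u,v)} are stated as two-way equivalences, the chain of implications closes in both directions with no further argument, so I would simply write ``The result follows by combining Theorem~\ref{propfrac} and Proposition~\ref{prop:link(x,y)(u,v)}'' and spell out the normalization check above.
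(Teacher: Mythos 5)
Your proposal is correct and follows exactly the paper's own proof, which is the one-line composition ``Combine Theorem~\ref{propfrac} with Proposition~\ref{prop:link(x,y)(u,v)}.'' The normalization bookkeeping you spell out (linearity giving $G\bigl(\tx/\Vert G\tx\Vert\bigr)=G\tx/\Vert G\tx\Vert$, and positivity of the denominators via Lemmas~\ref{positive} and~\ref{lem:lisc}) is implicit in the paper and is a sound, if optional, elaboration.
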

\begin{proof}
    Combine Theorem~\ref{propfrac} with Proposition~\ref{prop:link(x,y)(u,v)}.
\end{proof}
Hence, computing a critical angle between two cones $P=G(\KX)$ and $Q=H(\KY)$ amounts to computing a stationary point of the fractional programming problem~\eqref{maxfrac}. The next section is dedicated to this task.

\section{A sequential regularized partial linearization algorithm}\label{sec:alg}

In this section, we develop a numerical method to compute critical angles between a pair of LISC cones by computing stationary points of the fractional program \eqref{maxfrac}. By using the well-known approach introduced by Dinkelbach in \cite{D67}, we consider the following parametric program, for $\delta\in\mathbb R$:
\begin{equation}\label{bcp}
{\rm BCP}_\delta\;:\;
\begin{cases}
\min&f_{\delta}(x,y):=\langle Gx,Hy\rangle-\delta\left\|Gx\right\| \left\|Hy\right\|\\
\mbox{s.t.}&x\in K_{\mathbb X},\;\langle e_\mathbb X,x\rangle=1,\\
&y\in K_{\mathbb Y},\;\langle e_\mathbb Y,y\rangle=1.
\end{cases}
\end{equation}
\begin{remark}
We use the notation ${\rm BCP}_\delta$ since that program is constituted by biconvex functions (see \cite{GPK} for definitions and properties of biconvexity). Indeed, $\mathcal B(x,y):=\langle Gx,Hy\rangle$ is biconvex because it is bilinear. Furthermore, $\mathcal D(x,y):=\left\|Gx\right\| \left\|Hy\right\|$ is biconvex because its function at each variable is a multiple of the composition of the cartesian norm with a linear map which is a convex function.
\end{remark}
Observe that $f_{\delta}$ is differentiable at any $(x,y)$ such that $Gx\neq 0$ and $Hy\neq 0$. As discussed in the previous section, those conditions hold for every $(x,y)\in \Delta_{\mathbb X,\mathbb Y}$. Thus, $f_{\delta}$ is differentiable over the set  $\Delta_{\mathbb X,\mathbb Y}$ of feasible points of ${\rm BCP}_\delta$, and the gradient is given by
 $\nabla f_{\delta}(x,y)=\left(\nabla_x f_{\delta}(x,y),\nabla_y f_{\delta}(x,y)\right)$, where
 \begin{subequations}
 \begin{eqnarray}
&&\nabla_x f_{\delta}(x,y)=G^\top H y-\delta \Vert Gx\Vert^{-1}\Vert Hy\Vert G^\top G x,\label{gradx}\\
&&\nabla_y f_{\delta}(x,y)=H^\top G x-\delta \Vert Hy\Vert^{-1}\Vert Gx\Vert H^\top H y.\label{grady}
\end{eqnarray}
\end{subequations}

The next theorem states that a stationary point of \eqref{maxfrac} can be computed as a stationary point of ${\rm BCP}_\delta$ where $f_\delta$ vanishes.

\begin{theorem}\label{th:param}
The pair $(\bx,\by)$ is a stationary point (resp., solution) of ${\rm BCP}_\delta$ {\eqref{bcp}} with parameter $ \delta:= \frac{\langle G \bx,H \by\rangle}{\left\|G \bx\right\| \left\|H\by\right\|}$  if and only if $(\bx,\by)$ is a stationary point (resp., solution) of {\rm FP} {\eqref{maxfrac}}.
\end{theorem}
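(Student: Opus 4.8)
The plan is to treat the stationary-point equivalence and the solution equivalence separately, both resting on a single algebraic observation: on the common feasible set $\Delta_{\mathbb X,\mathbb Y}$, and for the specific value $\delta=\Phi(\bx,\by)$, the objective of ${\rm BCP}_\delta$ and the objective $\Phi$ of {\rm FP} have \emph{proportional} gradients. I would begin by recording the factorization
\[
f_\delta(x,y)=\langle Gx,Hy\rangle-\delta\,\|Gx\|\,\|Hy\|=\|Gx\|\,\|Hy\|\,\bigl(\Phi(x,y)-\delta\bigr),
\]
valid for every $(x,y)\in\Delta_{\mathbb X,\mathbb Y}$ (here $Gx\neq 0$ and $Hy\neq 0$ by Lemma~\ref{lem:lisc}). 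In particular $f_\delta(\bx,\by)=0$ precisely when $\delta=\Phi(\bx,\by)$.

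For the stationary-point part, I would substitute $\delta=\langle G\bx,H\by\rangle/(\|G\bx\|\,\|H\by\|)$ into \eqref{gradx}--\eqref{grady} and compare with \eqref{gradphi}. A direct computation then yields
\[
\nabla f_\delta(\bx,\by)=\|G\bx\|\,\|H\by\|\;\nabla\Phi(\bx,\by),
\]
so the two objective gradients differ only by the \emph{positive} scalar $\|G\bx\|\,\|H\by\|$. Since ${\rm BCP}_\delta$ and {\rm FP} share both the feasible set $\Delta_{\mathbb X,\mathbb Y}$ and the same constraint functions (hence the identical constraint gradients $\eX$ and $\eY$), the stationarity system of Definition~\ref{def:stationary} for one problem is obtained from the other by multiplying the objective gradient and the multipliers $\gamma_1,\gamma_2$ by $\|G\bx\|\,\|H\by\|$. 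As $\KX^\ast,\KY^\ast$ are cones and the orthogonality relation is invariant under positive scaling, this transformation carries one stationarity system exactly onto the other; hence the stationary points coincide.

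For the solution part, I would invoke the classical Dinkelbach monotonicity argument via the factorization above. If $(\bx,\by)$ solves {\rm FP} and $\delta=\Phi(\bx,\by)$, then $\Phi(x,y)\geq\delta$ for every feasible $(x,y)$, so $f_\delta(x,y)=\|Gx\|\,\|Hy\|\,(\Phi(x,y)-\delta)\geq 0=f_\delta(\bx,\by)$, meaning $(\bx,\by)$ solves ${\rm BCP}_\delta$. Conversely, if $(\bx,\by)$ solves ${\rm BCP}_\delta$ with $\delta=\Phi(\bx,\by)$, then $f_\delta\geq f_\delta(\bx,\by)=0$ over the feasible set, and dividing by $\|Gx\|\,\|Hy\|>0$ gives $\Phi(x,y)\geq\delta=\Phi(\bx,\by)$ for all feasible $(x,y)$, so $(\bx,\by)$ solves {\rm FP}.

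The only genuinely computational step is the gradient identity, and that is where I expect the main (though modest) effort: one must verify that after substituting $\delta$, the coefficient $\delta\,\|G\bx\|^{-1}\|H\by\|$ of $G^\top G\bx$ in \eqref{gradx} collapses to $\langle G\bx,H\by\rangle/\|G\bx\|^2$, which matches the corresponding normalized term in \eqref{gradphi} once the common factor $(\|G\bx\|\,\|H\by\|)^{-1}$ is pulled out, and symmetrically for the $y$-block. With that identity established, the invariance of complementarity under positive rescaling and the Dinkelbach argument make both directions immediate.
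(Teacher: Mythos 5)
Your proposal is correct. The solution half is essentially the paper's own argument: both exploit $f_\delta(\bx,\by)=0$ together with the sign of $f_\delta$ over the common feasible set (your factorization $f_\delta(x,y)=\Vert Gx\Vert\,\Vert Hy\Vert\,(\Phi(x,y)-\delta)$ just makes the paper's inequality \eqref{ineqth} explicit). The stationarity half, however, takes a genuinely different route. The paper writes out the system of Definition~\ref{def:stationary} for ${\rm BCP}_\delta$, takes inner products with $\bx$ and $\by$ to show that the choice of $\delta$ forces the multipliers to vanish ($\gamma_1=\gamma_2=0$), and then matches the resulting system, after rescaling by $1/\Vert H\by\Vert$ and $1/\Vert G\bx\Vert$, against the explicit characterization \eqref{stat2} of FP-stationary points. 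You instead prove the pointwise gradient proportionality $\nabla f_\delta(\bx,\by)=\Vert G\bx\Vert\,\Vert H\by\Vert\,\nabla\Phi(\bx,\by)$ (valid precisely because the term $(\Phi-\delta)\nabla[\Vert Gx\Vert\,\Vert Hy\Vert]$ vanishes when $\delta=\Phi(\bx,\by)$; your coefficient check against \eqref{gradx}--\eqref{grady} and \eqref{gradphi} is accurate) and then note that two KKT systems with identical constraints, proportional objective gradients, and multipliers rescaled by the same positive constant are equivalent, since membership in $\KX^\ast,\KY^\ast$ and orthogonality are invariant under positive scaling. Your route is shorter and never needs to compute the multipliers, and it isolates a reusable principle that would apply to any fractional objective over a shared feasible set; the paper's computation, by contrast, delivers the concrete zero-multiplier system \eqref{stat2} in the form that the rest of the paper works with (e.g., Proposition~\ref{propstop} and the subproblems of Algorithm~\ref{alg:main}). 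Either way the equivalence of stationary points holds, so both halves of your argument stand.
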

\begin{proof}
Let $(\bx,\by)$ be a stationary point of ${\rm BCP}_\delta$ {\eqref{bcp}}. From Definition\,\ref{def:stationary} and because $\KX$ and $\KY$ are self-dual, we get
\[
\left\{
\begin{array}{l}
\KX\ni \bar x\;\perp\;\left(G^\top H \by-\delta \Vert G\bx\Vert^{-1}\Vert H\by\Vert G^\top G \bx+\gamma_1e_{\mathbb{X}}\right)\in \KX,\\
\KY\ni \bar y\;\perp\;\left(H^\top G \bx-\delta \Vert H\by\Vert^{-1}\Vert G\bx\Vert H^\top H \by+\gamma_1e_{\mathbb{Y}}\right)\in \KY,\\
\langle e_\mathbb X,\bx\rangle=1,\;\langle e_\mathbb Y,\by\rangle=1.
\end{array}
\right.
\]
From the orthogonality conditions above we deduce that
$0=\inner{G\bx}{H\by} - \delta \norm{H\by}\norm{G\bx}+\gamma_1$
and $0=\inner{G\bx}{H\by} - \delta \norm{H\by}\norm{G\bx}+\gamma_2$. The definition of $\delta$ then implies that $\gamma_1=\gamma_2=0$. The above system then becomes (with $ \delta= \frac{\langle G \bx,H \by\rangle}{\left\|G \bx\right\| \left\|H\by\right\|}$)
\[
\left\{
\begin{array}{l}
\KX\ni \bar x\;\perp\;\left(G^\top H \by-\frac{\langle G \bx,H \by\rangle}{\left\|G \bx\right\|^2 }  G^\top G \bx\right)\in \KX,\\
\KY\ni \bar y\;\perp\;\left(H^\top G \bx-\frac{\langle G \bx,H \by\rangle}{ \left\|H\by\right\|^2}  H^\top H \by\right)\in \KY,\\
\langle e_\mathbb X,\bx\rangle=1,\;\langle e_\mathbb Y,\by\rangle=1.
\end{array}
\right.
\]

As $\bar x\;\perp\;\left(G^\top H \by-\frac{\langle G \bx,H \by\rangle}{\left\|G \bx\right\|^2 }  G^\top G \bx\right)\in \KX$ , it is equivalent to
$$\bar x\;\perp\;\frac{1}{\norm{H\by}}\left(G^\top H \by-\frac{\langle G \bx,H \by\rangle}{\left\|G \bx\right\|^2}  G^\top G \bx\right)\in \KX.$$ 
Analogously, we have $\bar y\;\perp\;\frac{1}{\norm{G\bx}}\left(H^\top G \bx-\frac{\langle G \bx,H \by\rangle}{ \left\|H\by\right\|^2}  H^\top H \by\right)\in \KY$. Then, we can deduce that $(\bx,\by)$ is a stationary point of ${\rm BCP}_\delta$ {\eqref{bcp}} with  $\delta= \frac{\langle G \bx,H \by\rangle}{\left\|G \bx\right\| \left\|H\by\right\|}$  if and only if $(\bx,\by)$ satisfies \eqref{stat2}. Thus, the equivalence follows. Suppose that $(\bx,\by)$ is a solution of ${\rm BCP}_\delta$ {\eqref{bcp}} with $\delta= \frac{\langle G \bx,H \by\rangle}{\left\|G \bx\right\| \left\|H\by\right\|}$. Then, $f_{\delta}(\bx,\by)=0$. Furthermore, for any feasible point $(x,y)$ of ${\rm BCP}_\delta$ {\eqref{bcp}} we get $f_{\delta}(x,y)\geq0$, which means 
\begin{equation}\label{ineqth}
\Phi(\bx,\by)=\delta\leq \Phi(x,y).
\end{equation}
Then, since problems ${\rm BCP}_\delta$ {\eqref{bcp}} and {\rm FP} {\eqref{maxfrac}} have the same feasible sets,  we conclude from \eqref{ineqth} that $(\bx,\by)$ is a solution of {\rm FP} {\eqref{maxfrac}}. The converse follows the same steps but in reverse order.

\end{proof}

\begin{corollary}\label{coro:2}
If $(\bx,\by)$ is a stationary point (resp., solution) of ${\rm BCP}_\delta$ {\eqref{bcp}} with parameter $ \delta:= \frac{\langle G \bx,H \by\rangle}{\left\|G \bx\right\| \left\|H\by\right\|}$, then $(u,v):=\left(\Vert G\bx\Vert^{-1}G\bx,\Vert H\by\Vert^{-1}H\by\right)$ is a critical (resp., antipodal) pair of $(P,Q)$.  Conversely, if $(u,v)$ is a critical (resp., antipodal) pair of $(P,Q)$, then there exists a stationary point $(\bx,\by)$ of ${\rm BCP}_\delta$ {\eqref{bcp}} with $ \delta:= \frac{\langle G \bx,H \by\rangle}{\left\|G \bx\right\| \left\|H\by\right\|}$  and such that $(u,v)=\left(\Vert G\bx\Vert^{-1}G\bx,\Vert H\by\Vert^{-1}H\by\right)$.
\end{corollary}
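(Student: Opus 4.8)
The plan is to read off this statement by composing the two equivalences established just above, exactly as Corollary~\ref{coro-frac} was read off from Theorem~\ref{propfrac} and Proposition~\ref{prop:link(x,y)(u,v)}. The two ingredients are Theorem~\ref{th:param}, which identifies stationary points (resp.\ solutions) of ${\rm BCP}_\delta$ at the self-referential parameter $\delta=\frac{\langle G\bx,H\by\rangle}{\Vert G\bx\Vert\,\Vert H\by\Vert}$ with stationary points (resp.\ solutions) of {\rm FP}~\eqref{maxfrac}, and Corollary~\ref{coro-frac}, which identifies stationary points (resp.\ solutions) of \eqref{maxfrac} with critical (resp.\ antipodal) pairs of $(P,Q)$ via the normalization $(x,y)\mapsto\left(\frac{Gx}{\Vert Gx\Vert},\frac{Hy}{\Vert Hy\Vert}\right)$.

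For the forward implication I would begin with a stationary point (resp.\ solution) $(\bx,\by)$ of ${\rm BCP}_\delta$ at $\delta=\frac{\langle G\bx,H\by\rangle}{\Vert G\bx\Vert\,\Vert H\by\Vert}$. Theorem~\ref{th:param} then gives that $(\bx,\by)$ is a stationary point (resp.\ solution) of \eqref{maxfrac}, and feeding this into the direct part of Corollary~\ref{coro-frac} (taking its $(\tx,\ty)$ to be our $(\bx,\by)$) produces the critical (resp.\ antipodal) pair $(u,v)=\left(\frac{G\bx}{\Vert G\bx\Vert},\frac{H\by}{\Vert H\by\Vert}\right)$, which is precisely $\left(\Vert G\bx\Vert^{-1}G\bx,\Vert H\by\Vert^{-1}H\by\right)$.

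For the converse I would run the same two results in reverse order: given a critical (resp.\ antipodal) pair $(u,v)$, the converse part of Corollary~\ref{coro-frac} supplies a stationary point $(\tx,\ty)$ of \eqref{maxfrac} with $(u,v)=\left(\frac{G\tx}{\Vert G\tx\Vert},\frac{H\ty}{\Vert H\ty\Vert}\right)$, and then Theorem~\ref{th:param}, read from right to left, turns $(\tx,\ty)$ into a stationary point of ${\rm BCP}_\delta$ at the parameter $\delta=\frac{\langle G\tx,H\ty\rangle}{\Vert G\tx\Vert\,\Vert H\ty\Vert}$. Renaming $(\tx,\ty)$ as $(\bx,\by)$ yields the assertion.

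There is no analytic obstacle here; the only thing to watch is the bookkeeping of $\delta$. In the forward direction the correct value of $\delta$ is supplied by hypothesis, while in the converse direction Theorem~\ref{th:param} automatically couples the stationary point of \eqref{maxfrac} with the value of $\delta$ evaluated at that same point, so the two parameter choices are consistent and nothing extra needs checking. I would also remark that every normalization above is well defined because Lemma~\ref{lem:lisc} ensures $G\tx\neq0$ and $H\ty\neq0$ on the feasible set $\Delta_{\mathbb X,\mathbb Y}$, but this is already folded into the two cited results and need not be reproved.
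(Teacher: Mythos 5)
Your proposal is correct and follows exactly the paper's own route: the paper proves Corollary~\ref{coro:2} in one line by citing Theorem~\ref{th:param} together with Corollary~\ref{coro-frac}, which is precisely the composition you carry out. Your additional bookkeeping on the parameter $\delta$ and the well-definedness of the normalizations (via Lemma~\ref{lem:lisc}) is sound and simply makes explicit what the paper leaves implicit.
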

\begin{proof}
    It follows directly from Theorem~\ref{th:param} and Corollary~\ref{coro-frac}.
\end{proof}

Next, we formulate our algorithm to compute critical angles between a pair of LISC cones $(P,Q)=(G(K_\mathbb X), \,H(K_{\mathbb Y})).$
\begin{algorithm}[H]
\caption{Sequential Regularized Partial Linearization}\label{alg:main}
\begin{algorithmic}[1]
\Statex \Comment {\textbf{Step $0$}}
\State Choose $(x^0,y^0)\in \Delta_{\mathbb X,\mathbb Y}$, $\beta>0$, $0<\alpha<1$, $0<\rho<1$, and prox-parameters $\mu_1,\mu_2\geq0$.
\State Set $k:=0$.
\vspace{0.2cm}
\Statex \Comment {\textbf{Step $1$}}
\State Set
$$\delta_k:=\frac{\langle Gx^k,Hy^k\rangle}{\Vert G x^k\Vert \Vert H y^k\Vert}.$$
\vspace{0.2cm}
\Statex \Comment {\textbf{Step $2$}}
\State Let $L^k_1(x) := \left\langle Gx,Hy^k-\delta_k\Vert Gx^k\Vert^{-1}\Vert Hy^k\Vert Gx^k\right\rangle$.\newline 
Compute a solution $\tilde x^k$ to the convex  program
\begin{equation}\label{lin1}
\begin{array}{ll}
\min & L^k_1(x) + \frac{\mu_1}{2} \|x - x^k\|^2  \\
s.t.&x\in K_{\mathbb X},\;\langle e_\mathbb X,x\rangle=1.
\end{array}
\end{equation}

\State Let $L^k_2(y) :=  \left\langle Hy,G x^k-\delta_k\Vert G x^k\Vert\Vert Hy^k\Vert^{-1} Hy^k\right\rangle$. \newline Compute a solution $\tilde y^k$ to the convex program
 \begin{equation}\label{lin2}
\begin{array}{ll}
\min & L^k_2(y) + \frac{\mu_2}{2} \|y - y^k\|^2 \\
s.t.&y\in K_{\mathbb Y},\; \langle e_\mathbb Y,y\rangle=1. 
\end{array}
\end{equation}

\State Let $d_1^k:=\tilde x^k-x^k$ and $d_2^k:=\tilde y^k-y^k$.
\vspace{0.2cm}
\Statex \Comment {\textbf{Step $3$}}

\State If $L^k_1( d_1^k)=0$ and $L^k_2( d_2^k)=0$, terminate. \newline Otherwise, let $t_k:= \beta \rho^{\ell_k}$, where $\ell_k$ is the smallest nonnegative integer $\ell$ such that
$$\Phi(x^k+t^k d^k_1,y^k+t^k d^k_2) \leq \Phi(x^k,y^k)  + 
\alpha t_k \frac{L^k_1( d_1^k)+L^k_2( d_2^k)}{\Vert Gx^k\Vert\Vert Hy^k\Vert}
$$
Set $\left(x^{k+1},y^{k+1}\right):=(x^k,y^k)+t_k(d^k_1,d^k_2)$. Go to Step\,1.

\end{algorithmic}
\end{algorithm}

Some comments on Algorithm~\ref{alg:main} are in order.
\begin{enumerate}[itemsep=2pt, topsep=2pt, label=\roman*), leftmargin=1.75cm, start=1] 
    \item The prox-parameters $\mu_1$ and $\mu_2$ can take arbitrary non-negative values. The choice  $\mu_1=\mu_2=0$ is covered by the convergence analysis given below  because $\Delta_{\mathbb{X},\mathbb{Y}}$ of~\eqref{maxfrac} is compact (see Proposition\,\ref{compact}). However, our numerical experiments suggest that choosing the prox-parameters to be strictly positive is advantageous: it regularizes the iterative process in the sense that the algorithm performs fewer iterations to compute a  stationary point within a given tolerance. 
        \item We highlight that Step 2 can be performed in parallel as the two subproblems are independent of each other. 
    \item For LISC cones, the interest of dealing with~\eqref{maxfrac} instead of~\eqref{maximal} is now evident: the cones $\KX$ and $\KY$ are in general more structured, permitting \eqref{lin1} and \eqref{lin2} to have straightforward solutions in many settings of interest (c.f. Subsection~\ref{sec:proj}). If the considered cones are not LISC, then $G$ and $H$ are the identities operators in the corresponding spaces $\mathbb X$ and $\mathbb Y$, and thus $P= K_{\mathbb X}$ and $Q= K_{\mathbb Y}$. Algorithm~\ref{alg:main} is still applicable:  for applications in which the subproblems do not have explicit solutions, we can use of off-the-shelf (linear, quadratic, conic, etc.) solvers to compute $\tilde x^k$ and $\tilde y^k$.
    \item In Step 3, we perform the standard Armijo line search rule. Indeed,  the term $\frac{L^k_1( d_1^k)+L^k_2( d_2^k)}{\Vert Gx^k\Vert\Vert Hy^k\Vert}$ coincides with $ \left\langle \nabla \Phi(x^k,y^k), d^k\right\rangle$. As we show in Proposition~\ref{prop:defined} below, $d^k$ is a descend direction for $\Phi$ at $(x^k,y^k)$.
\end{enumerate}

\begin{lemma}\label{lem:neg}
We have that 
\begin{equation}\label{ineqL}
L_1^k(d_1^k)\leq -\frac{\mu_1}{2}\Vert d_1^k\Vert^2\quad\mbox{and}\quad L_2^k(d_2^k)\leq -\frac{\mu_2}{2}\Vert d_2^k\Vert^2.
\end{equation}
Furthermore, if $x^k$ is not a solution to \eqref{lin1} or $y^k$ is not a solution to \eqref{lin2}, then 
\begin{equation}\label{ineqL12}
L_1^k(d_1^k)+L_2^k(d_2^k) < 0.
\end{equation}
\end{lemma}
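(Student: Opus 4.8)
The plan is to exploit two structural facts. First, each $L_i^k$ is a \emph{linear} functional of its argument; second, $\tilde x^k$ (resp.\ $\tilde y^k$) is the minimizer of the regularized subproblem \eqref{lin1} (resp.\ \eqref{lin2}), whereas $x^k$ (resp.\ $y^k$) is itself a feasible point of that same subproblem. Comparing the objective at the minimizer against its value at the feasible incumbent will deliver both inequalities almost immediately.

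Concretely, I would first observe that $L_1^k(x) = \langle Gx, c^k\rangle$ with $c^k := Hy^k - \delta_k\Vert Gx^k\Vert^{-1}\Vert Hy^k\Vert\, Gx^k$ a vector independent of $x$; hence $L_1^k$ is linear in $x$, and in particular $L_1^k(d_1^k) = L_1^k(\tilde x^k - x^k) = L_1^k(\tilde x^k) - L_1^k(x^k)$, with the analogous identity for $L_2^k$. Next, since $x^k\in\DX$ is feasible for \eqref{lin1} while $\tilde x^k$ is its solution, evaluating the regularized objective at both points gives
$$L_1^k(\tilde x^k) + \frac{\mu_1}{2}\Vert \tilde x^k - x^k\Vert^2 \;\leq\; L_1^k(x^k) + \frac{\mu_1}{2}\Vert x^k - x^k\Vert^2 \;=\; L_1^k(x^k).$$
Rearranging and inserting the linearity identity yields $L_1^k(d_1^k)\leq -\frac{\mu_1}{2}\Vert d_1^k\Vert^2$; the reasoning for $L_2^k$ is identical. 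This establishes \eqref{ineqL}.

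For \eqref{ineqL12} I would run the \emph{strict} version of the same comparison. If, say, $x^k$ is not a solution of \eqref{lin1}, then its objective value strictly exceeds the optimal value attained at $\tilde x^k$, so the displayed inequality becomes strict and gives $L_1^k(d_1^k) < -\frac{\mu_1}{2}\Vert d_1^k\Vert^2 \leq 0$. Adding $L_2^k(d_2^k)\leq 0$ from \eqref{ineqL} then forces $L_1^k(d_1^k)+L_2^k(d_2^k)<0$; the symmetric case, in which $y^k$ fails to solve \eqref{lin2}, is handled the same way. Note that existence of the minimizers $\tilde x^k,\tilde y^k$ is guaranteed by continuity of the objectives and compactness of $\DX,\DY$ (Proposition~\ref{compact}), and uniqueness is not needed for the argument.

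I do not foresee a genuine obstacle here; the statement is a standard descent-quantity estimate. The one point deserving attention is the linearity of $L_1^k$ and $L_2^k$, which is precisely what converts the optimality inequality into a bound on $L_i^k(d_i^k)$ by itself. Were $L_i^k$ merely convex rather than linear, the optimality comparison would only control the combined quantity $L_i^k(d_i^k)+\frac{\mu_i}{2}\Vert d_i^k\Vert^2$, and the clean separation into \eqref{ineqL} would require an extra convexity step.
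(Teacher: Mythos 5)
Your proposal is correct and follows essentially the same argument as the paper: compare the regularized objective of \eqref{lin1} (resp.\ \eqref{lin2}) at the minimizer $\tilde x^k$ against the feasible point $x^k$, use linearity of $L_1^k$ to write $L_1^k(d_1^k)=L_1^k(\tilde x^k)-L_1^k(x^k)$, and invoke strictness of the optimality comparison when $x^k$ (or $y^k$) is not itself a solution. Your explicit remark on why linearity is the crux is a nice touch, but the underlying proof is the paper's own.
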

\begin{proof}
Since $\tilde x^k$ solves \eqref{lin1} and $x^k$ is feasible, we have that $L^k_1\left(\tilde x^k\right)+\frac{\mu_1}{2}\Vert \tilde x^k-x^k\Vert^2 \leq L^k_1\left( x^k\right)$, with strict inequality if $x^k$ is not a solution to \eqref{lin1}. 
As $L_1^k\left(d_1^k\right)=L^k_1\left(\tilde x^k\right)- L^k_1\left( x^k\right)$, we conclude that $L_1^k\left(d_1^k\right) \leq -\frac{\mu_1}{2}\Vert d_1^k\Vert^2$.  Analogously, we get $L_2^k(d_2^k)\leq -\frac{\mu_2}{2}\Vert d_2^k\Vert^2$.  Suppose that $x^k$ is not a solution to \eqref{lin1} or $y^k$ is not a solution to \eqref{lin2}. Then, one of the inequalities in \eqref{ineqL} holds strictly. It implies,  
$L_1^k(d_1^k)+L_2^k(d_2^k)< -\frac{\mu_1}{2}\Vert d_1^k\Vert^2-\frac{\mu_2}{2}\Vert d_2^k\Vert^2\leq0.$ 
\end{proof}
\begin{proposition}\label{prop:defined}
Suppose that $x^k$ is not a solution to \eqref{lin1} or $y^k$ is not a solution to \eqref{lin2}. Then, the vector $\left(d_1^k,d_2^k\right)$ is a descent direction for the fractional function $\displaystyle\Phi$ (given in~\eqref{maxfrac}) at $(x^k,y^k)$.
\end{proposition}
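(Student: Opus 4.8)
The plan is to verify directly that $(d_1^k,d_2^k)$ satisfies the defining inequality of a descent direction for $\Phi$ at $(x^k,y^k)$, namely that $\langle \nabla\Phi(x^k,y^k),(d_1^k,d_2^k)\rangle<0$. I would first note that $\Phi$ is differentiable at $(x^k,y^k)$: since $(x^k,y^k)\in\Delta_{\mathbb X,\mathbb Y}$, Lemma~\ref{lem:lisc} guarantees $Gx^k\neq 0$ and $Hy^k\neq 0$, so the gradient formulas in~\eqref{gradphi} are valid at this point.

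The key step is to establish the identity announced in comment~(iv) following Algorithm~\ref{alg:main}:
\[
\langle \nabla\Phi(x^k,y^k),(d_1^k,d_2^k)\rangle=\frac{L_1^k(d_1^k)+L_2^k(d_2^k)}{\Vert Gx^k\Vert\,\Vert Hy^k\Vert}.
\]
To obtain this, I would factor $\tfrac{1}{\Vert Gx^k\Vert\Vert Hy^k\Vert}$ out of each component of $\nabla\Phi(x^k,y^k)$ in~\eqref{gradphi}. Using $\delta_k=\frac{\langle Gx^k,Hy^k\rangle}{\Vert Gx^k\Vert\Vert Hy^k\Vert}$ one rewrites
\[
\nabla_x\Phi(x^k,y^k)=\frac{1}{\Vert Gx^k\Vert\Vert Hy^k\Vert}\Bigl(G^\top Hy^k-\delta_k\Vert Gx^k\Vert^{-1}\Vert Hy^k\Vert\,G^\top Gx^k\Bigr),
\]
and symmetrically for $\nabla_y\Phi(x^k,y^k)$. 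The vector inside the parentheses is exactly the gradient of the \emph{linear} form $L_1^k$ (resp.\ $L_2^k$) defined in Step~2, so pairing it with $d_1^k$ (resp.\ $d_2^k$) returns $L_1^k(d_1^k)$ (resp.\ $L_2^k(d_2^k)$) by linearity. Summing the two components yields the displayed identity.

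With the identity in hand, the conclusion is immediate. Under the standing hypothesis that $x^k$ fails to solve~\eqref{lin1} or $y^k$ fails to solve~\eqref{lin2}, Lemma~\ref{lem:neg} (specifically~\eqref{ineqL12}) gives $L_1^k(d_1^k)+L_2^k(d_2^k)<0$, while the denominator $\Vert Gx^k\Vert\Vert Hy^k\Vert$ is strictly positive, again by Lemma~\ref{lem:lisc}. Hence $\langle \nabla\Phi(x^k,y^k),(d_1^k,d_2^k)\rangle<0$, so $(d_1^k,d_2^k)$ is a descent direction for $\Phi$.

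I do not anticipate a genuine obstacle; the only point requiring care is the bookkeeping in the identity above, namely tracking the $\delta_k$-weighted factors $\Vert Gx^k\Vert^{-1}\Vert Hy^k\Vert$ and $\Vert Gx^k\Vert\Vert Hy^k\Vert^{-1}$ so that the residual vectors coincide exactly with the gradients of $L_1^k$ and $L_2^k$. Once that matching is confirmed, the descent property is a one-line consequence of Lemma~\ref{lem:neg}.
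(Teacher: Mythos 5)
Your proof is correct and takes essentially the same route as the paper's: both establish the identity $\langle \nabla\Phi(x^k,y^k),(d_1^k,d_2^k)\rangle=\Vert Gx^k\Vert^{-1}\Vert Hy^k\Vert^{-1}\bigl(L_1^k(d_1^k)+L_2^k(d_2^k)\bigr)$ by substituting $\delta_k$ into the gradient formulas~\eqref{gradphi} and matching the resulting vectors with the gradients of the linear forms $L_1^k$ and $L_2^k$, and then conclude negativity from inequality~\eqref{ineqL12} of Lemma~\ref{lem:neg}. Your additional remarks (differentiability of $\Phi$ at $(x^k,y^k)$ and positivity of the denominator via Lemma~\ref{lem:lisc}) only make explicit what the paper leaves implicit.
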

\begin{proof}
We must prove that
\begin{equation}\label{descent}
\langle \nabla\Phi(x^k,y^k), (d_1^k,d_2^k)\rangle=\langle \nabla_x\Phi(x^k,y^k), d_1^k\rangle+\langle \nabla_y\Phi(x^k,y^k), d_2^k\rangle<0.
\end{equation}
From \eqref{gradphi}, we obtain,
\begin{eqnarray*}
\nabla_x\Phi(x^k,y^k)&=&\frac{G^\top H y^k}{\Vert Gx^k\Vert \Vert Hy^k\Vert}-\delta_k\frac{G^\top G x^k}{\Vert Gx^k\Vert^2},\\
\nabla_y\Phi(x^k,y^k)&=&\frac{H^\top G x^k}{\Vert Gx^k\Vert \Vert Hy^k\Vert}-\delta_k\frac{H^\top H y^k}{\Vert Hy^k\Vert^2}\\
\end{eqnarray*}
Then,
\begin{eqnarray*}
\langle \nabla_x\Phi(x^k,y^k), d_1^k\rangle&=&\Vert Gx^k\Vert^{-1}\Vert Hy^k\Vert^{-1}L^k_1( d_1^k),\\
\langle \nabla_y\Phi(x^k,y^k), d_2^k\rangle&=&\Vert Gx^k\Vert^{-1}\Vert Hy^k\Vert^{-1}L^k_2( d_2^k).
\end{eqnarray*}
Since $x^k$ is not a solution to \eqref{lin1} or $y^k$ is not a solution to \eqref{lin2}, we can use \eqref{ineqL12} to conclude \eqref{descent}.

\end{proof}

The following proposition explains why the stop criterium of Algorithm\,\ref{alg:main} allow us to find solutions for \eqref{lin1} and \eqref{lin2}.
\begin{proposition}\label{prop:stop}
    If $L_1^k(d_1^k)=0$, then $x^k$ solves \eqref{lin1}. Moreover, If $L_2^k(d_2^k)=0$, then $y^k$ solves \eqref{lin2}.
\end{proposition}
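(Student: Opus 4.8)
The plan is to exploit that the objective of \eqref{lin1} splits into the \emph{linear} term $L_1^k$ and the quadratic regularizer $\frac{\mu_1}{2}\|x-x^k\|^2$, and to compare its values at the computed minimizer $\tilde{x}^k$ and at the current iterate $x^k$. Writing $\psi_1(x):=L_1^k(x)+\frac{\mu_1}{2}\|x-x^k\|^2$ for the objective of \eqref{lin1}, I would first record the two evaluations that drive everything: $\psi_1(x^k)=L_1^k(x^k)$ (the regularizer vanishes at $x^k$) and $\psi_1(\tilde{x}^k)=L_1^k(\tilde{x}^k)+\frac{\mu_1}{2}\|d_1^k\|^2$. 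Since $\tilde{x}^k$ minimizes $\psi_1$ over the feasible set and $x^k$ is itself feasible, optimality gives $\psi_1(\tilde{x}^k)\le \psi_1(x^k)=L_1^k(x^k)$.

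Next I would use the linearity of $L_1^k$, exactly as in the proof of Lemma~\ref{lem:neg}, to write $L_1^k(d_1^k)=L_1^k(\tilde{x}^k)-L_1^k(x^k)$; thus the hypothesis $L_1^k(d_1^k)=0$ is equivalent to $L_1^k(\tilde{x}^k)=L_1^k(x^k)$. Substituting this equality into the optimality inequality $L_1^k(\tilde{x}^k)+\frac{\mu_1}{2}\|d_1^k\|^2\le L_1^k(x^k)$ yields $\frac{\mu_1}{2}\|d_1^k\|^2\le 0$, hence $\frac{\mu_1}{2}\|d_1^k\|^2=0$. Consequently $\psi_1(\tilde{x}^k)=L_1^k(\tilde{x}^k)=L_1^k(x^k)=\psi_1(x^k)$, so $x^k$ attains the minimum value of $\psi_1$ and therefore solves \eqref{lin1}. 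The statement for $L_2^k$ and \eqref{lin2} follows verbatim, replacing $(L_1^k,\tilde{x}^k,x^k,\mu_1,d_1^k)$ by $(L_2^k,\tilde{y}^k,y^k,\mu_2,d_2^k)$.

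The only point deserving care is the admissible choice $\mu_1=0$, for which $d_1^k$ need not vanish: the program \eqref{lin1} is then a linear program that may admit several minimizers, and $x^k$ qualifies as one precisely because it shares the optimal objective value with $\tilde{x}^k$, not because it coincides with $\tilde{x}^k$. The argument above treats both regimes uniformly, since it concludes $\psi_1(x^k)=\psi_1(\tilde{x}^k)$ rather than $x^k=\tilde{x}^k$. I do not anticipate any genuine obstacle here; the proposition is essentially a restatement of the equality case of the first inequality in Lemma~\ref{lem:neg}, combined with the linearity of $L_1^k$.
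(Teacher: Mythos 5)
Your proof is correct and rests on the same two ingredients as the paper's: the linearity of $L_1^k$ (so $L_1^k(d_1^k)=L_1^k(\tilde x^k)-L_1^k(x^k)$) and the optimality of $\tilde x^k$ for \eqref{lin1}. The only difference is organizational: the paper splits into the cases $\mu_1=0$ (where it argues, as you do, via equality of objective values) and $\mu_1>0$ (where it invokes \eqref{ineqL} to deduce $d_1^k=0$, hence $x^k=\tilde x^k$), whereas your value-comparison argument handles both regimes uniformly --- a slightly cleaner write-up of the same idea.
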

\begin{proof}
    Suppose that $L_1^k(d_1^k)=0$. Then, $L_1^k(\tilde x^k)-L_1^k(x^k)=0$. If $\mu_1=0$, the regularization term in \eqref{lin1} vanishes. Hence, because of $L_1^k(\tilde x^k)=L_1^k(x^k)$ we conclude that $x^k$ solves \eqref{lin1}.  If $\mu_1>0$, from \eqref{ineqL} we deduce that $d_1^k=\tilde x^k-x^k=0$. Then, $x^k=\tilde x^k$ which implies that $x^k$ solves \eqref{lin1}. The proof of the second part is analogous.
\end{proof}

Now, the following two results show that Algorithm\,\ref{alg:main} find stationary points of problem \eqref{maxfrac}. 

\begin{proposition}\label{propstop}
Let $(\bar x,\bar y)\in \Delta_{\mathbb X,\mathbb Y}$ and set $\bar\delta:=\frac{\langle G\bar x,H\bar y\rangle}{\Vert G\bar x\Vert \Vert H\bar y\Vert}$. Then, $(\bar x,\bar y)$ is a stationary point of \eqref{maxfrac} if, and only if, $\bar x$ and $\bar y$ are solutions of the respective programs:
\begin{equation}\label{linbar1}
\begin{array}{ll}
\min & \left\langle Gx,H\bar y-\bar\delta\Vert G\bar x\Vert^{-1}\Vert H\bar y\Vert G\bar x\right\rangle + \frac{\mu_1}{2} \| x -\bx \|^2\\
s.t.&x\in K_{\mathbb X},\;\langle e_\mathbb X,x\rangle=1,
\end{array}
\end{equation}
and
 \begin{equation}\label{linbar2}
\begin{array}{ll}
\min & \left\langle Hy,G\bar x-\bar \delta\Vert G\bar x\Vert\Vert H\bar y\Vert^{-1} H\bar y\right\rangle + \frac{\mu_2}{2} \| y -\by \|^2\\
s.t.&y\in K_{\mathbb Y},\; \langle e_\mathbb Y,y\rangle=1.
\end{array}
\end{equation}

\end{proposition}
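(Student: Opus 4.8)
The plan is to characterize, via convexity and first-order optimality, when $\bx$ solves \eqref{linbar1} and $\by$ solves \eqref{linbar2}, and to show that these two conditions collapse exactly onto the stationarity system \eqref{stat2} for \eqref{maxfrac}. First I would observe that each subproblem is a convex program: the objective of \eqref{linbar1} is a linear term in $x$ plus the convex proximal term $\frac{\mu_1}{2}\norm{x-\bx}^2$, minimized over the compact convex spectraplex $\DX$ (Proposition~\ref{compact}), and likewise for \eqref{linbar2}. Hence a feasible point is a global minimizer if and only if it is a KKT/stationary point. Since the single affine constraint $\inner{\eX}{x}=1$ has nonzero gradient $\eX$, LICQ holds and Theorem~\ref{theo:solstat} applies, so $\bx$ solves \eqref{linbar1} if and only if there is $\gamma_1\in\R$ with
$$\KX\ni\bx\;\perp\;\left(w_1+\gamma_1\eX\right)\in\KX,\qquad \inner{\eX}{\bx}=1,$$
where $w_1:=G^\top H\by-\bar\delta\,\norm{G\bx}^{-1}\norm{H\by}\,G^\top G\bx$ is the gradient of the objective of \eqref{linbar1} at $x=\bx$ (the proximal term contributes nothing there). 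The analogous characterization holds for \eqref{linbar2} with $w_2:=H^\top G\bx-\bar\delta\,\norm{G\bx}\norm{H\by}^{-1}H^\top H\by$ and a multiplier $\gamma_2$.

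The key step is to show that the multipliers vanish, and this is forced by the specific choice $\bar\delta=\frac{\inner{G\bx}{H\by}}{\norm{G\bx}\norm{H\by}}$. Pairing $w_1$ with $\bx$ gives
$$\inner{\bx}{w_1}=\inner{G\bx}{H\by}-\bar\delta\,\norm{G\bx}^{-1}\norm{H\by}\,\norm{G\bx}^2=\inner{G\bx}{H\by}-\bar\delta\,\norm{G\bx}\norm{H\by}=0,$$
because $\bar\delta\,\norm{G\bx}\norm{H\by}=\inner{G\bx}{H\by}$ by definition. Substituting this into the orthogonality relation $\inner{\bx}{w_1+\gamma_1\eX}=0$ and using $\inner{\eX}{\bx}=1$ yields $\gamma_1=0$; the same computation with $w_2$ and $\by$ gives $\inner{\by}{w_2}=0$ and hence $\gamma_2=0$.

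Finally I would match the reduced conditions to \eqref{stat2}. With $\gamma_1=\gamma_2=0$, the point $(\bx,\by)$ solves \eqref{linbar1} and \eqref{linbar2} simultaneously precisely when
$$\KX\ni\bx\;\perp\;w_1\in\KX,\qquad \KY\ni\by\;\perp\;w_2\in\KY,\qquad \inner{\eX}{\bx}=1,\ \inner{\eY}{\by}=1.$$
Substituting $\bar\delta$ shows $w_1=G^\top H\by-\frac{\inner{G\bx}{H\by}}{\norm{G\bx}^2}G^\top G\bx$, which is exactly $\norm{H\by}$ times the vector appearing in the first orthogonality relation of \eqref{stat2}; symmetrically, $w_2$ equals $\norm{G\bx}$ times the vector in the second relation. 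Since $\norm{G\bx}>0$ and $\norm{H\by}>0$ by Lemma~\ref{lem:lisc} and $\KX,\KY$ are cones, multiplication by these positive scalars preserves both cone membership and orthogonality with $\bx$, $\by$; therefore the displayed system is equivalent to \eqref{stat2}, i.e.\ to $(\bx,\by)$ being a stationary point of \eqref{maxfrac}. Every implication above is reversible, which delivers the ``if and only if''. The only genuinely delicate points are the vanishing of the multipliers (which relies on the exact value of $\bar\delta$) and the bookkeeping of the positive factors $\norm{G\bx},\norm{H\by}$ relating $w_1,w_2$ to the gradient of $\Phi$; both are routine given the preceding lemmas.
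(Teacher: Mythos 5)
Your proof is correct, but it follows a genuinely different route from the paper's. The paper's argument stays entirely in the variational-inequality world: it characterizes stationarity of \eqref{maxfrac} as the VI $\inner{\nabla f_{\bar\delta}(\bx,\by)}{(x,y)-(\bx,\by)}\geq 0$ over $\Delta_{\mathbb X,\mathbb Y}$, splits it into two independent VIs over $\DX$ and $\DY$, and observes that these are exactly the (necessary and sufficient, by convexity) first-order conditions for \eqref{linbar1} and \eqref{linbar2}, since the prox terms have zero gradient at $\bx$ and $\by$. Because the affine constraints $\inner{\eX}{x}=1$, $\inner{\eY}{y}=1$ are absorbed into the feasible sets, no Lagrange multipliers ever appear, which is what keeps the paper's proof to a few lines. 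You instead work with the KKT/conic-complementarity form: you characterize optimality of each subproblem via multipliers $\gamma_1,\gamma_2$, kill the multipliers using the identity $\inner{\bx}{w_1}=\inner{G\bx}{H\by}-\bar\delta\norm{G\bx}\norm{H\by}=0$ forced by the choice of $\bar\delta$ (this is precisely the trick the paper uses in the proof of Theorem~\ref{th:param}, not here), and then rescale by the positive factors $\norm{H\by}$ and $\norm{G\bx}$ (positive by Lemma~\ref{lem:lisc}) to land on the system \eqref{stat2}, invoking the earlier lemma that \eqref{stat2} characterizes stationary points of \eqref{maxfrac}. What the paper's route buys is brevity and multiplier-free bookkeeping; what your route buys is that it makes explicit two things the paper glosses over with ``in view of \eqref{gradx} and \eqref{grady}'': the exact role of $\bar\delta$ in eliminating the multipliers, and the positive-scalar correspondence between the subproblem gradients and the vectors appearing in \eqref{stat2}. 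Both chains of equivalences are fully reversible, so your ``if and only if'' is sound.
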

\begin{proof}
The pair $(\bar x,\bar y)$ is a stationary point of \eqref{maxfrac} if, and only if, $\inner{\nabla f_{\bar \delta}(\bx,\by)}{((x,y)-(\bx,\by)}\geq 0$ for all $(x,y) \in \Delta_{\mathbb X,\mathbb Y}$, which is equivalent to
\[
\left\{
\begin{array}{c}
\inner{\nabla_x f_{\bar \delta}(\bx,\by)}{x-\bx}\geq 0 \quad \forall x\in K_{\mathbb X}\; \mbox{ s.t. }\;\langle e_\mathbb X,x\rangle=1 \\
\inner{\nabla_y f_{\bar \delta}(\bx,\by)}{y-\by}\geq 0 \quad \forall y\in K_{\mathbb Y}\; \mbox{ s.t. }\;\langle e_\mathbb Y,y\rangle=1 .
\end{array}\right.
\]
In view of~\eqref{gradx} and~\eqref{grady}, the above system is equivalent to saying that $\bx$ and $\by$ are solutions to~\eqref{linbar1} and~\eqref{linbar2}, respectively.
\end{proof}

\begin{theorem}\label{th:conv}
Every accumulation point $\left(\bar{x},\bar{y}\right)$ of the sequence $\left\{\left(x^k,y^k\right)\right\}$, generated by Algorithm\,\ref{alg:main}, is a stationary point of problem \eqref{maxfrac}. 
\end{theorem}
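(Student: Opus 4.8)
The plan is to prove this convergence result by a standard argument for descent methods with Armijo line search, adapted to the fractional-programming structure. First I would establish that the algorithm produces a monotonically decreasing sequence of objective values $\Phi(x^k,y^k)$. This follows immediately from the Armijo acceptance test in Step 3 together with Proposition~\ref{prop:defined}: since $(d_1^k,d_2^k)$ is a descent direction for $\Phi$ at $(x^k,y^k)$ whenever the algorithm has not terminated, the term $L_1^k(d_1^k)+L_2^k(d_2^k)$ is strictly negative by Lemma~\ref{lem:neg}, and hence the right-hand side of the line-search inequality is strictly below $\Phi(x^k,y^k)$. I would also verify that the line search is well-defined (terminates in finitely many backtracking steps): this is the standard consequence of $(d_1^k,d_2^k)$ being a descent direction and $\Phi$ being continuously differentiable on the compact feasible set $\Delta_{\mathbb X,\mathbb Y}$, on which $Gx\neq 0$ and $Hy\neq 0$ by Lemma~\ref{lem:lisc}.

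Next I would exploit compactness. By Proposition~\ref{compact}, $\Delta_{\mathbb X,\mathbb Y}=\DX\times\DY$ is compact, so the whole sequence $\{(x^k,y^k)\}$ lives in a compact set and admits accumulation points. Fix an accumulation point $(\bar x,\bar y)$ and a subsequence $(x^{k_j},y^{k_j})\to(\bar x,\bar y)$. Since $\Phi$ is continuous and the sequence $\{\Phi(x^k,y^k)\}$ is monotonically decreasing and bounded below (again by compactness), it converges to a limit, and in particular $\Phi(x^{k_j},y^{k_j})\to\Phi(\bar x,\bar y)$ along the whole sequence. Consequently the successive decrease $\Phi(x^k,y^k)-\Phi(x^{k+1},y^{k+1})\to 0$. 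Combining this with the Armijo inequality gives
\[
t_{k}\,\frac{-\bigl(L_1^{k}(d_1^{k})+L_2^{k}(d_2^{k})\bigr)}{\Vert Gx^{k}\Vert\,\Vert Hy^{k}\Vert}\longrightarrow 0 .
\]
The denominators are bounded away from zero on the compact set (by Lemma~\ref{lem:lisc} and continuity), so the numerator $t_k\bigl(-L_1^k(d_1^k)-L_2^k(d_2^k)\bigr)\to 0$.

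The heart of the argument, and the step I expect to be the main obstacle, is passing to the limit to conclude that $(\bar x,\bar y)$ satisfies the stationarity characterization of Proposition~\ref{propstop}. The difficulty is the usual dichotomy in Armijo-based convergence proofs, arising from the two ways the product $t_k(-L_1^k(d_1^k)-L_2^k(d_2^k))$ can tend to zero. In the first case, the step sizes $t_{k_j}$ stay bounded away from zero along the subsequence; then the linearized decrease $L_1^{k_j}(d_1^{k_j})+L_2^{k_j}(d_2^{k_j})\to 0$, and since each term is $\le 0$ by Lemma~\ref{lem:neg}, both tend to zero. In the second case, $t_{k_j}\to 0$, meaning backtracking was active: the trial step $t_{k_j}/\rho$ was rejected, so the Armijo inequality \emph{failed} at that larger step, and a mean-value/Taylor expansion of $\Phi$ around $(x^{k_j},y^{k_j})$ combined with the strict inequality $\alpha<1$ forces $\langle\nabla\Phi(x^{k_j},y^{k_j}),(d_1^{k_j},d_2^{k_j})\rangle\to 0$, hence again $L_1^{k_j}(d_1^{k_j})+L_2^{k_j}(d_2^{k_j})\to 0$ via the identity in Proposition~\ref{prop:defined}. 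Either way, both linearized decreases vanish in the limit.

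Finally I would convert the vanishing of the linearized decreases into the stationarity conditions. By continuity of $G$, $H$, the norms, and $\delta_k=\Phi(x^k,y^k)\to\bar\delta$, the linear functionals $L_1^{k_j}$ and the prox terms converge to those of the limiting subproblems~\eqref{linbar1} and~\eqref{linbar2} with $(\bar x,\bar y)$ in place of $(x^k,y^k)$. Since $L_1^{k_j}(d_1^{k_j})\to 0$ and $L_1^{k_j}(d_1^{k_j})\le L_1^{k_j}(x)-L_1^{k_j}(x^{k_j})-\frac{\mu_1}{2}\Vert x-x^{k_j}\Vert^2$ for every feasible $x$ (by optimality of $\tilde x^{k_j}$ for~\eqref{lin1}), taking limits yields that $\bar x$ solves~\eqref{linbar1}, and symmetrically $\bar y$ solves~\eqref{linbar2}. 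Proposition~\ref{propstop} then identifies $(\bar x,\bar y)$ as a stationary point of~\eqref{maxfrac}, completing the proof. The delicate bookkeeping is ensuring that the limiting subproblem optimality is genuinely attained in the limit rather than merely approximately; the compactness of $\Delta_{\mathbb X,\mathbb Y}$ and uniform continuity of the gradient on it are what make this closure step rigorous.
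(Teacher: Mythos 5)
Your proposal follows essentially the same route as the paper's proof: compactness of $\Delta_{\mathbb X,\mathbb Y}$ (Proposition~\ref{compact}), the standard Armijo argument yielding $\langle\nabla\Phi(x^k,y^k),(d_1^k,d_2^k)\rangle\to 0$ (which the paper invokes as ``standard arguments'' and you spell out via the step-size dichotomy), passage to the limit in the subproblem optimality inequality, and the conclusion via Proposition~\ref{propstop}; the only structural difference is that the paper first shows the limit $\tilde x$ of the subproblem solutions solves~\eqref{linbar1} and then transfers optimality to $\bar x$ through Proposition~\ref{prop:stop}, whereas you combine the two facts directly, which works equally well and even avoids needing $\tilde x^{k_j}$ to converge. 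One slip worth fixing: optimality of $\tilde x^{k_j}$ for~\eqref{lin1} gives $L_1^{k_j}(d_1^{k_j})\le L_1^{k_j}(x)-L_1^{k_j}(x^{k_j})+\frac{\mu_1}{2}\Vert x-x^{k_j}\Vert^2-\frac{\mu_1}{2}\Vert \tilde x^{k_j}-x^{k_j}\Vert^2$, i.e.\ with a \emph{plus} sign on the $\Vert x-x^{k_j}\Vert^2$ term (your stated inequality with the minus sign is false in general), but the corrected inequality still yields in the limit that $\bar x$ minimizes $\bar L_1(x)+\frac{\mu_1}{2}\Vert x-\bar x\Vert^2$ over $\Delta_{\mathbb X}$, so your argument goes through unchanged.
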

\begin{proof}
Let $\left(\bar{x},\bar{y}\right)$ be an accumulation point of $\left\{\left(x^k,y^k\right)\right\}$ (which is assured to exist because $\Delta_{\mathbb{X},\mathbb{Y}}$ is compact, see Proposition\,\ref{compact}). By passing to a subsequence if necessary, we may suppose that $(x^k,y^k)\to (\bar x,\bar y)$ and $(\tilde x^k,\tilde y^k)\to(\tilde x,\tilde y)$, as $k\to\infty$, where $(\tilde x,\tilde y)\in \Delta_{\mathbb{X},\mathbb{Y}}$. Let $x\in K_{\mathbb X}$ s.t. $\langle e_\mathbb X,x\rangle=1$ be arbitrary. Because $\tilde x^k$ solves \eqref{lin1}, we have that 
\begin{align}\label{solbar}
& \left\langle G\tilde x^k,Hy^k-\delta_k\Vert Gx^k\Vert^{-1}\Vert Hy^k\Vert Gx^k\right\rangle +\frac{\mu_1}{2}\norm{\tilde x^k-x^k}^2\\
&\qquad \leq
\left\langle Gx,Hy^k-\delta_k\Vert Gx^k\Vert^{-1}\Vert Hy^k\Vert Gx^k\right\rangle+\frac{\mu_1}{2}\norm{x-x^k}^2.\nn
\end{align}
By letting $k\to\infty$ on \eqref{solbar}, we obtain 
\begin{align*}
& \left\langle G\tx,H\by-\bar \delta\Vert G\bx\Vert^{-1}\Vert H\by\Vert G\bx\right\rangle +\frac{\mu_1}{2}\norm{\tx-\bx}^2\\
&\qquad \leq
\left\langle Gx,H\by-\bar\delta\Vert G\bx\Vert^{-1}\Vert H\by\Vert G\bx\right\rangle+\frac{\mu_1}{2}\norm{x-\bx}^2.\nn
\end{align*}
where 
\begin{equation*}
\bar\delta:=\frac{\langle G\bar x,H\bar y\rangle}{\Vert G\bar x\Vert \Vert H\bar y\Vert}.
\end{equation*}
Thus, we deduce that $\tilde x$ is a solution to~\eqref{linbar1}.
On the other hand, from standard arguments on the Armijo rule we have that $\left\langle\nabla\Phi(x^k,y^k),(d_1^k,d_2^k)\right\rangle\to 0$ as $k\to\infty$. In Proposition\,\ref{prop:defined}, we have shown that $\left\langle\nabla_x\Phi(x^k,y^k),d_1^k\right\rangle\leq0$ and $\left\langle\nabla_y\Phi(x^k,y^k),d_2^k\right\rangle\leq0$. Then, 
\begin{eqnarray}
&&\lim_{k\to\infty}\left\langle\nabla_x\Phi(x^k,y^k),d_1^k\right\rangle=\left\langle\nabla_x\Phi(\bar x,\bar y),\tilde x-\bar x\right\rangle=0,\label{limitx}\\
&&\lim_{k\to\infty}\left\langle\nabla_y\Phi(x^k,y^k),d_2^k\right\rangle=\left\langle\nabla_y\Phi(\bar x,\bar y),\tilde y-\bar y\right\rangle=0.\label{limity}
\end{eqnarray}
Since $\left\langle\nabla_x\Phi(\bar x,\bar y),\tilde x-\bar x\right\rangle=\Vert G\bar x\Vert^{-1}\Vert H\bar y\Vert^{-1}\bar L_1( \tilde x-\bar x)$, from \eqref{limitx} we conclude
$$\bar L_1(\tilde x)-\bar L_1(\bar x)=\bar L_1( \tilde x-\bar x)=0.$$
Thus, from Proposition\,\ref{prop:stop} we conclude that not only $\tx$ but also $\bar x$ is solution of \eqref{linbar1}. In the same way, we can prove that both $\ty$ and $\by$ are solutions of \eqref{linbar2}. (We highlight that if $\mu_1,\mu_2>0$ in Algorithm~\ref{alg:main}, then $\bx=\tx$ and $\by=\ty$ due to strongly convexity of the objective functions.)
Therefore, from Proposition\,\ref{propstop} we conclude that $(\bar x,\bar y)$ is a stationary point of \eqref{maxfrac}.
\end{proof}

Finally, it follows from Theorem~\ref{th:conv} and Corollary~\ref{coro:2} that
Algorithm~\ref{alg:main} asymptotically computes a critical pair 
$(u,v)=\left(\Vert G\bx\Vert^{-1}G\bx,\Vert H\by\Vert^{-1}H\by\right)$ of $(P,Q)$.

\section{Computational experiments}\label{sec:comp}
This section reports some numerical experiments for computing critical angles between pairs of LISC cones.
For each class of test problems considered below, we describe how the subproblems~\eqref{lin1} and \eqref{lin2}  take a form that can be efficiently solved.

Indeed, for the pair of LISC cones $(P,Q)=(G(K_\mathbb X),H(K_{\mathbb Y}))$, the optimization problems in Step 2 of Algorithm~\ref{alg:main} are equivalent to the projection problems onto 
\begin{equation*}\label{onto}
\Delta_{\mathbb X}=\{x\in\KX:\langle \eX,x\rangle=1\}\quad\mbox{and}\quad \Delta_{\mathbb Y}=\{y\in\KY:\langle \eY,y\rangle=1\},
\end{equation*}
when $\mu_1>0$ and $\mu_2>0$, respectively (if $\mu_1=\mu_2=0$, then subproblems~\eqref{lin1} and \eqref{lin2} have straightforward solutions). Let us focus on problem~\eqref{lin1}, a similar reasoning applies to~\eqref{lin2}. After some algebraic manipulation on the cost function of \eqref{lin1}, we can obtain the solution of~\eqref{lin1} by solving the following projection problem:
\begin{equation}\label{eq:proy}
\begin{array}{ll}
\min &  \frac{1}{2} \Big\|x - \big(x^k-\frac{c_k}{\mu_1}\big) \Big\|^2  \\
\mbox{s.t.}& x\in \Delta_{\mathbb X}
\end{array}\quad \mbox{with}\quad 
c_k :=  G^T(Hy^k-\delta_k\Vert Gx^k\Vert^{-1}\Vert Hy^k\Vert Gx^k),
\end{equation}

As benchmark for our analysis, we solved the nonlinear program~\eqref{maxfrac} by using \ipopt~\cite{WaBi}
distributed in the OPTI toolbox~\cite{CuWi}. We use default values for the solver parameters, except for the maximum number of iterations, which is incremented to $5000$ and the Absolute Function Tolerance increased to $10^{-6}$.
The numerical experiments were performed on an Intel Core i7 clocked at 2 GHz (32 GB RAM). 

The performance of the two solvers is measured by considering $10^3$ initial points
and by reporting in the tables below the following values:
\begin{itemize}
    \item $\theta_b(P,Q)$: the best critical angle, which we compute as the cosine inverse of the best objective function value of~\eqref{maxfrac} obtained by one solver with $10^3$ initial points.
    \item it$_b$, it$_a$, it$_w$: minimum, average and maximum number of iterations required by the algorithm to compute a stationary point.
    \item CPU$_b$, CPU$_a$, CPU$_w$: minimum, average and maximum CPU time in seconds required by the algorithm to compute a stationary point. 
\end{itemize}

In practice, we stop Algorithm~{\ref{alg:main}} when $|L^k_1(d^k_1)|\leq \epsilon_1$ and $|L^k_2(d^k_2)|\leq \epsilon_2$, with $\epsilon_1,\epsilon_2$ being small positive values. Additionally, we require that the $\delta_k - \delta_{k+5} \leq \epsilon_3$ for a given small $\epsilon_3>0$, that is, the algorithm runs until the objective function value of~{\eqref{maxfrac}} stabilizes over the last five iterations. We also set the maximum number of iterations to $5000$.

\subsection{Critical angles between two polyhedral cones}

As in~\cite{SeSo1}, we consider the computation of critical angles of a pair 
\begin{equation*}\label{polyhedral}
    (P,Q) = (G(\R_+^p),H(\R_+^q))
\end{equation*}
of polyhedral cones in $\R^n$, where $G$ and $H$ are matrices in $\R^{n \times p}$ and $\R^{n \times q}$, respectively, such that $P$ and $Q$ are LISC cones.
For $\mu_1,\mu_2>0$ and because of \eqref{eq:proy}, we can obtain the solution of~\eqref{lin1} by solving the following projection onto the simplex problem:
\begin{equation*}\label{eq:c}
\begin{array}{ll}
\min &  \frac{1}{2} \Big\|x - \big(x^k-\frac{c_k}{\mu_1}\big) \Big\|^2  \\
\mbox{s.t.}&x\geq 0,\\
&\langle{\bf 1}_p,x\rangle=1.
\end{array}
\end{equation*}
The resolution of this task is explained in Example\,\ref{ex:pyeccion} item \ref{exa}. It can be efficiently performed by the specialized algorithm of \cite{Cond}.

We compute critical angles between the nonnegative orthant $P=\R^n_+$ and the Schur cone 
\begin{equation*}
    Q = \Bigg\{x \in \R^n: \sum_{i=1}^k x_i \geq 0, \mbox{ for } k \in \{1,...,n-1\} \mbox{ and } x_1+...+x_n = 0\Bigg\}.
\end{equation*}
We consider $G = I_n$ and the columns of the matrix $H$ are in the following form 
\begin{equation*}
  h_1 = \frac{1}{\sqrt{2}},
\begin{pmatrix}
  1,-1,0,\ldots,0  
\end{pmatrix}^T, \ldots
 h_{n-1} = \frac{1}{\sqrt{2}},
\begin{pmatrix}
  0,\ldots,0,1,-1  
\end{pmatrix}^T.
\end{equation*}

We generate several instances by considering $n \in \{5,20, 50, 100, 500,700,1000\}$ and run Algorithm~\ref{alg:main} and \ipopt~with $10^3$ random initial points in the simplex.
For these test problems, we stop Algorithm~\ref{alg:main} when the stopping criteria are satisfied with $\epsilon_1 = 10^{-6}$, $\epsilon_2 = 10^{-6}$ and $\epsilon_3 = 10^{-5}$.
After some tuning, the prox-parameter values were chosen as $\mu_1 = 0.01$ and $\mu_2 = 2.6$.  

The performance of Algorithm~\ref{alg:main} and \ipopt~for solving these test problems is presented in Tables~\ref{tab:poly_rspl} and~\ref{tab:poly_ip}, respectively. 
This first set of numerical experiments demonstrates that, unlike \ipopt, the proposed algorithm can calculate a critical angle within the maximum number of iterations for all the instances tested. As expected, the required CPU time increases with the dimension of the problems, but it typically remains in the tens of seconds, even for the largest instance. Furthermore, we observe that, for all but two cases (i.e., $n = 100$ and $n = 200$), the best optimal value computed by our algorithm is larger than the one computed by {\tt IPOPT}, as evidenced by values in the first columns of the corresponding tables. The box-plots in Fig.~\ref{fig:poly} illustrate the distribution of the critical angles across the different initial points with a digital precision of $10^{-6}$. We can see that the interquartile ranges (IQRs) across the box-plots are relatively narrow for both the solvers, indicating consistent optimization performance. For $n\geq 400$, our algorithm can converge to better quality points (i.e., corresponding to a large objective function value) than \ipopt~for almost all the initial points, confirming its good performance.

\begin{table}[H]
\centering
\begin{tabular}{|l|ccccccc|}
\hline
n    & $\theta_b(P,Q)$ & it$_{b}$ & it$_{a}$ & it$_{w}$ & CPU$_b$ & CPU$_a$ & CPU$_w$  \\ 
\hline
5 & 8.5242e-01 $\pi$ & 13 & 88.64 & 1101 & 1.64e-04 & 1.05e-03 & 1.11e-02  \\ 
20 & 9.2821e-01 $\pi$ & 48 & 262.84 & 1325 & 7.03e-04 & 3.63e-03 & 1.74e-02  \\ 
50 & 9.5478e-01 $\pi$ & 113 & 519.89 & 1969 & 2.40e-03 & 1.10e-02 & 4.13e-02  \\ 
100 & 9.6794e-01 $\pi$ & 195 & 1120.44 & 4227 & 8.44e-03 & 4.80e-02 & 1.76e-01  \\ 
200 & 9.7013e-01 $\pi$ & 240 & 1279.29 & 3216 & 5.56e-02 & 3.01e-01 & 1.26e+00  \\ 
400 & 9.6683e-01 $\pi$ & 347 & 1257.72 & 2899 & 1.89e-01 & 7.21e-01 & 3.54e+00  \\ 
500 & 9.6689e-01 $\pi$ & 446 & 1236.44 & 2367 & 3.28e-01 & 1.00e+00 & 2.20e+00  \\ 
700 & 9.6626e-01 $\pi$ & 494 & 1174.94 & 2403 & 8.12e-01 & 2.22e+00 & 6.05e+00  \\ 
1000 & 9.6360e-01 $\pi$ & 561 & 1105.09 & 2009 & 3.02e+00 & 6.38e+00 & 1.20e+01  \\ 
\hline
\end{tabular}
\caption{Performance of Algorithm~\ref{alg:main} for computing a critical angle between two polyhedral cones with $10^3$ initial points.}\label{tab:poly_rspl}
\end{table}
\begin{table}[H]
\centering
\begin{tabular}{|l|ccccccc|}
\hline
n    & $\theta_b(P,Q)$ & it$_{b}$ & it$_{a}$ & it$_{w}$ & CPU$_b$ & CPU$_a$ & CPU$_w$  \\ 
\hline
5 & 8.5242e-01 $\pi$ & 11 & 19.63 & 79 & 2.23e-02 & 3.01e-02 & 6.97e-02  \\ 
20 & 9.2822e-01 $\pi$ & 60 & 207.65 & 1665 & 6.11e-02 & 1.96e-01 & 1.56e+00  \\ 
50 & 9.5483e-01 $\pi$ & 194 & 1733.40 & 5000 & 2.90e-01 & 2.36e+00 & 1.08e+01  \\ 
100 & 9.6812e-01 $\pi$ & 729 & 4517.13 & 5000 & 1.21e+00 & 9.29e+00 & 2.49e+01  \\ 
200 & 9.7352e-01 $\pi$ & 5000 & 5000.00 & 5000 & 1.35e+01 & 2.01e+01 & 5.83e+01  \\ 
400 & 9.5666e-01 $\pi$ & 5000 & 5000.00 & 5000 & 5.23e+01 & 5.46e+01 & 8.27e+01  \\ 
500 & 9.4717e-01 $\pi$ & 5000 & 5000.00 & 5000 & 7.85e+01 & 8.82e+01 & 1.36e+02  \\ 
700 & 9.5286e-01 $\pi$ & 5000 & 5000.00 & 5000 & 1.66e+02 & 1.85e+02 & 2.61e+02  \\ 
1000 & 9.3832e-01 $\pi$ & 5000 & 5000.00 & 5000 & 4.25e+02 & 4.56e+02 & 6.32e+02  \\ 
\hline
\end{tabular}
\caption{Performance of \ipopt~for computing a critical angle between two polyhedral cones with $10^3$ initial points.}\label{tab:poly_ip}
\end{table}

\begin{figure}[htb]
    \centering
    \includegraphics[width=0.7\textwidth, trim={5.3cm 9.5cm 5.3cm 10.5cm}]{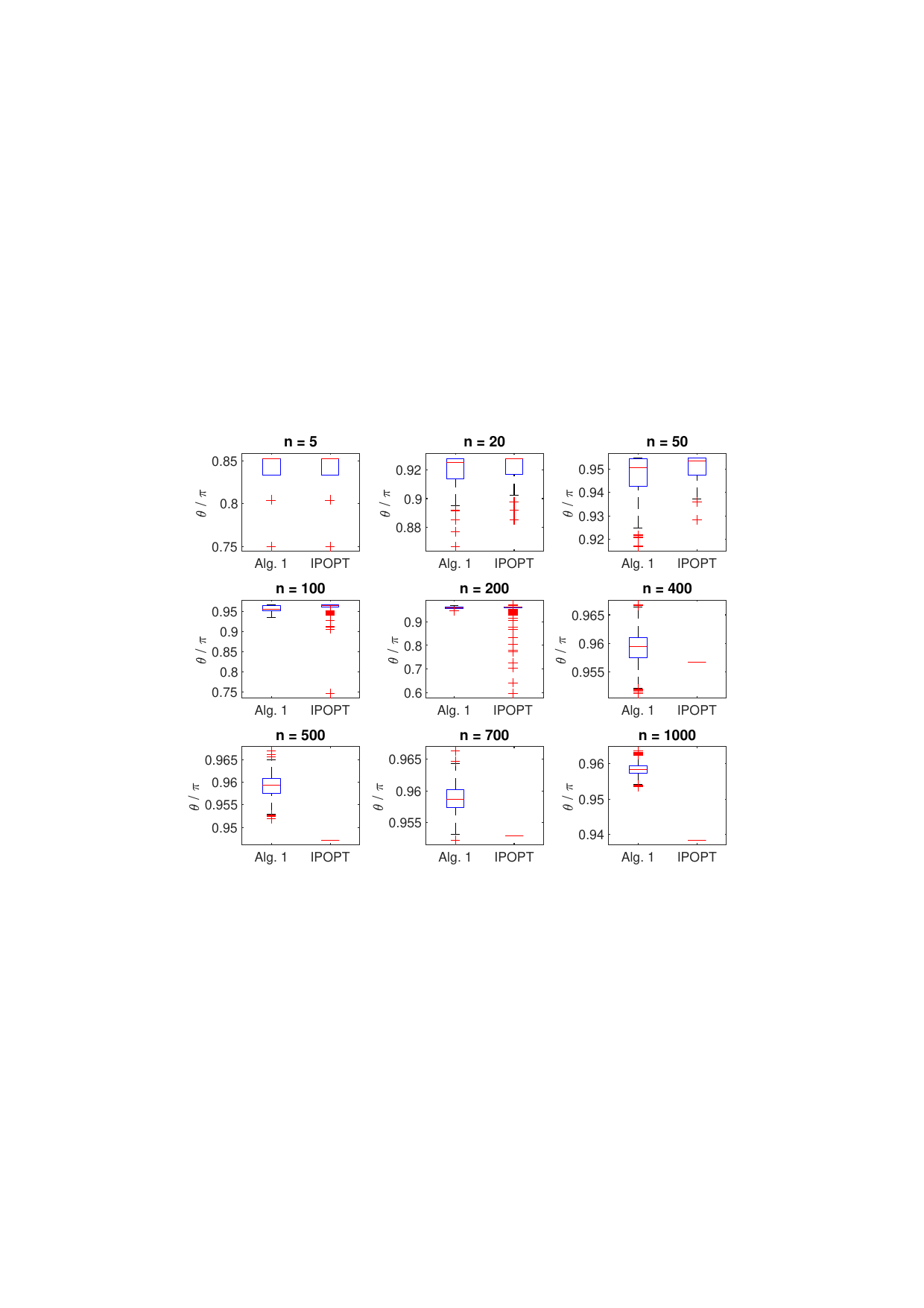}
    \caption{Box-plots of the critical angles between two polyhedral cones computed by Algorithm\,\ref{alg:main} and \ipopt~with $10^3$ initial points.}
    \label{fig:poly}
\end{figure}

In terms of CPU time, our approach is significantly faster than \ipopt: compare the three last columns of Tables~\ref{tab:poly_rspl} and~\ref{tab:poly_ip}. More precisely, \ipopt~took about 226.5 hours (approximately, 9 days) to solve all the $9000$ instances, while Algorithm~\ref{alg:main} took only 2.9 hours.

In \cite[Example\,7.4]{SeSo1}, all the critical angles for the case $n=5$ were computed and are shown in Table~\ref{tab:dist}. It is interesting to compare the number of times that our algorithm and \ipopt~compute the maximum critical angle. To this end, we ran the two algorithms with $10^3$ random initial points in the standard simplex, and in Table\,\ref{tab:dist}, we report (in percentage) the distribution of the critical angles computed. The results show that both Algorithm~\ref{alg:main} and \ipopt~compute the maximum angle for more than $60 \%$ of the total runs, and their performance is comparable in this case.
\begin{table}[H]
\begin{center}
\begin{tabular}{|l|c|c|c|c|c|c|c|c|c|c|c|c|c|c|c|c|c|c|c|c|c|c|}
\hline
Solver $/$ Angle& {\scriptsize $0.6476\,\pi$; $0.6667\,\pi$; $0.6959\,\pi$; $0.7180\,\pi$} &{\scriptsize $0.7500\,\pi$} & {\scriptsize $0.7820\,\pi$  }& {\scriptsize $0.8041\,\pi$ } & {\scriptsize $0.8333\,\pi$ }& {\scriptsize $0.8524\,\pi$ } \\
\hline
Algorithm~\ref{alg:main}&-&1.8\% & - & 12.6 \%  &21.6 \%  & 64 \%\\
\ipopt &-&1.7\% & - & 13.9 \%  &21.1 \%  & 63.3 \%\\
\hline
\end{tabular}
\caption{\small Percentage of critical angles between the nonnegative orthant  and the Schur cone in $\mathbb{R}^5$ found by Algorithm\,\ref{alg:main} with $10^3$ initial points.}\label{tab:dist}
\end{center}

\end{table}

\subsection{Critical angles between two ellipsoidal cones}
In this section, we evaluate the performance of Algorithm\,\ref{alg:main} for computing critical angles between a pair of ellipsoidal cones:
$$P=\{(\xi,t) \in \R^{n-1}\times \mathbb R: \sqrt{\langle \xi, A \xi} \rangle \leq t \}\quad\mbox{and}\quad Q=\{(\xi,t) \in \R^{n-1}\times \mathbb R: \sqrt{\langle \xi, B \xi} \rangle \leq t \},$$
where $A$ and $B$ are symmetric positive definite matrices of order $n$. Recall that $P$ and $Q$ are LISC cones which are represented by
\begin{align*}\label{eq:Lor}
P=G(\mathcal L_n^+)\quad\mbox{and}\quad Q=H(\mathcal L_n^+),
\end{align*}
where $\mathcal L_n^+$ is the Lorentz cone in $\mathbb R^n$ and $G,H:\mathbb R^n\to\mathbb R^n$ are linear maps given by
$$G(\xi,t)=(A^{-1/2}\xi,t)\quad\mbox{and}\quad H(\xi,t)=(B^{-1/2}\xi,t),$$
for all $(\xi,t) \in \R^{n-1}\times \mathbb R.$
As before, the subproblems in Step 2 of Algorithm~\ref{alg:main} are reformulated (when $\mu_1,\mu_2>0$) as orthogonal projection problems \eqref{eq:proy}. In this case, from Example\,\ref{ex:pyeccion} item\,\ref{exb}, we have that the solution to problem \eqref{eq:proy} is 
$$\wx^k=\left(\frac{\tilde b^k}{\max\{1,\Vert \tilde  b^k\Vert\}},1\right),$$
where $\tilde b^k \in \mathbb R^{n-1}$ is the vector consisting of the first $n-1$ elements of $(x^k-\frac{c_k}{\mu_1})$ and $c_k$ given in~\eqref{eq:proy}. 
For this set of test problems, we generate positive definite matrices $A$ and $B$  as follows
$$A = C+ n I \mbox{ and } B = D + nI$$ 
where $C$ and $D$ are randomly generated sparse symmetric matrices with elements normally distributed, with mean $0$ and variance $1$. We generate several instances by considering $n \in\{5,20,50,100,500,700,1000\}$ and density $d=0.5$. The tolerances for the algorithm stopping test are $\epsilon_1=10^{-6}$, $\epsilon_2=10^{-6}$ and $\epsilon_3=10^{-7}$.
We consider a maximum number of iterations equal to $5000$, and the value of the prox-parameters are chosen as $\mu_1 = 0.005$ and $\mu_2 = 0.005$.  
Finally, the initial point $(x^0,y^0)$ is set as $x^0=(\xi^0,1)$ and $y^0=(\nu^0,1)$, where $\xi^0$ and $\nu^0$ are random points in the Euclidean unit ball in $\R^{n-1}$.

We report in Tables~\ref{tab:elli_rspl} and \ref{tab:elli_ip} the performance of Algorithm~\ref{alg:main} and \ipopt, respectively. 
The computed critical angles by both algorithms are found to be very close for most instances. However, for the last two instances, \ipopt~manages to find a slightly better solution. It should be noted that \ipopt~achieves this result for only one initial point out of $10^3$ in the case of $n = 700$ and $n = 1000$, as depicted in Fig~\ref{fig:elli}.

One significant advantage of the proposed algorithm is that it requires significantly less computational effort compared to \ipopt. In fact, our algorithm is observed to be even $10^2$ times faster for the largest instances. For instance, \ipopt~took about 379 hours (15 days) to solve all the $9000$ instances, while Algorithm~\ref{alg:main} took only 4.5 hours. This substantial reduction in computation time allows for additional runs of our algorithm to explore a larger range of initial points, which could potentially yield improved critical angles.

\begin{table}[htb]
\centering
\begin{tabular}{|l|ccccccc|}
\hline
n    & $\theta_b(P,Q)$ & it$_{b}$ & it$_{a}$ & it$_{w}$ & CPU$_b$ & CPU$_a$ & CPU$_w$  \\ 
\hline
5 & 5.6950e-01 $\pi$ & 9 & 10.43 & 13 & 1.86e-04 & 3.93e-04 & 2.98e-02  \\ 
20 & 1.5928e-01 $\pi$ & 92 & 286.99 & 825 & 1.76e-03 & 5.68e-03 & 1.67e-02  \\ 
50 & 9.7918e-02 $\pi$ & 101 & 266.82 & 732 & 5.01e-03 & 1.16e-02 & 2.91e-02  \\ 
100 & 6.7217e-02 $\pi$ & 174 & 598.09 & 1684 & 2.26e-02 & 7.76e-02 & 2.54e-01  \\ 
200 & 4.6770e-02 $\pi$ & 295 & 690.63 & 1912 & 1.41e-01 & 3.49e-01 & 1.17e+00  \\ 
400 & 3.2684e-02 $\pi$ & 381 & 640.27 & 1382 & 7.08e-01 & 1.34e+00 & 4.14e+00  \\ 
500 & 2.9122e-02 $\pi$ & 405 & 556.26 & 1043 & 1.48e+00 & 2.40e+00 & 4.39e+00  \\ 
700 & 2.4519e-02 $\pi$ & 418 & 539.18 & 754 & 3.19e+00 & 4.34e+00 & 7.32e+00  \\ 
1000 & 2.0430e-02 $\pi$ & 433 & 505.31 & 610 & 6.52e+00 & 7.95e+00 & 1.57e+01  \\ 
\hline
\end{tabular}
\caption{Performance of Algorithm~\ref{alg:main} for computing a critical angle between two ellipsoidal cones with $10^3$ initial points.}\label{tab:elli_rspl}
\end{table}
\begin{table}[htb]
\centering
\begin{tabular}{|l|ccccccc|}
\hline
n    & $\theta_b(P,Q)$ & it$_{b}$ & it$_{a}$ & it$_{w}$ & CPU$_b$ & CPU$_a$ & CPU$_w$  \\ 
\hline
5 & 5.6950e-01 $\pi$ & 9 & 12.86 & 37 & 2.09e-02 & 2.53e-02 & 4.61e-02   \\ 
20 & 1.5928e-01 $\pi$ & 37 & 57.04 & 111 & 4.63e-02 & 7.57e-02 & 4.25e-01   \\ 
50 & 9.7918e-02 $\pi$ & 36 & 50.20 & 93 & 7.61e-02 & 1.10e-01 & 2.19e-01   \\ 
100 & 6.7218e-02 $\pi$ & 56 & 98.32 & 190 & 2.79e-01 & 5.44e-01 & 1.60e+00   \\ 
200 & 4.6773e-02 $\pi$ & 56 & 96.89 & 239 & 1.59e+00 & 2.86e+00 & 8.51e+00   \\ 
400 & 3.2693e-02 $\pi$ & 75 & 178.35 & 468 & 1.36e+01 & 3.32e+01 & 8.73e+01   \\ 
500 & 2.9136e-02 $\pi$ & 102 & 291.72 & 1051 & 3.52e+01 & 1.00e+02 & 3.54e+02   \\ 
700 & 2.4860e-02 $\pi$ & 99 & 255.17 & 973 & 9.23e+01 & 2.50e+02 & 1.00e+03   \\ 
1000 & 1.0850e-01 $\pi$ & 110 & 314.44 & 878 & 3.20e+02 & 9.79e+02 & 5.26e+03   \\ 
\hline
\end{tabular}
\caption{Performance of \ipopt~for computing a critical angle between two polyhedral cones with $10^3$ initial points.}\label{tab:elli_ip}
\end{table}

\begin{figure}[htp]
    \centering
    \includegraphics[width=0.7\textwidth, trim={5.3cm 9.5cm 5.3cm 10.5cm}]{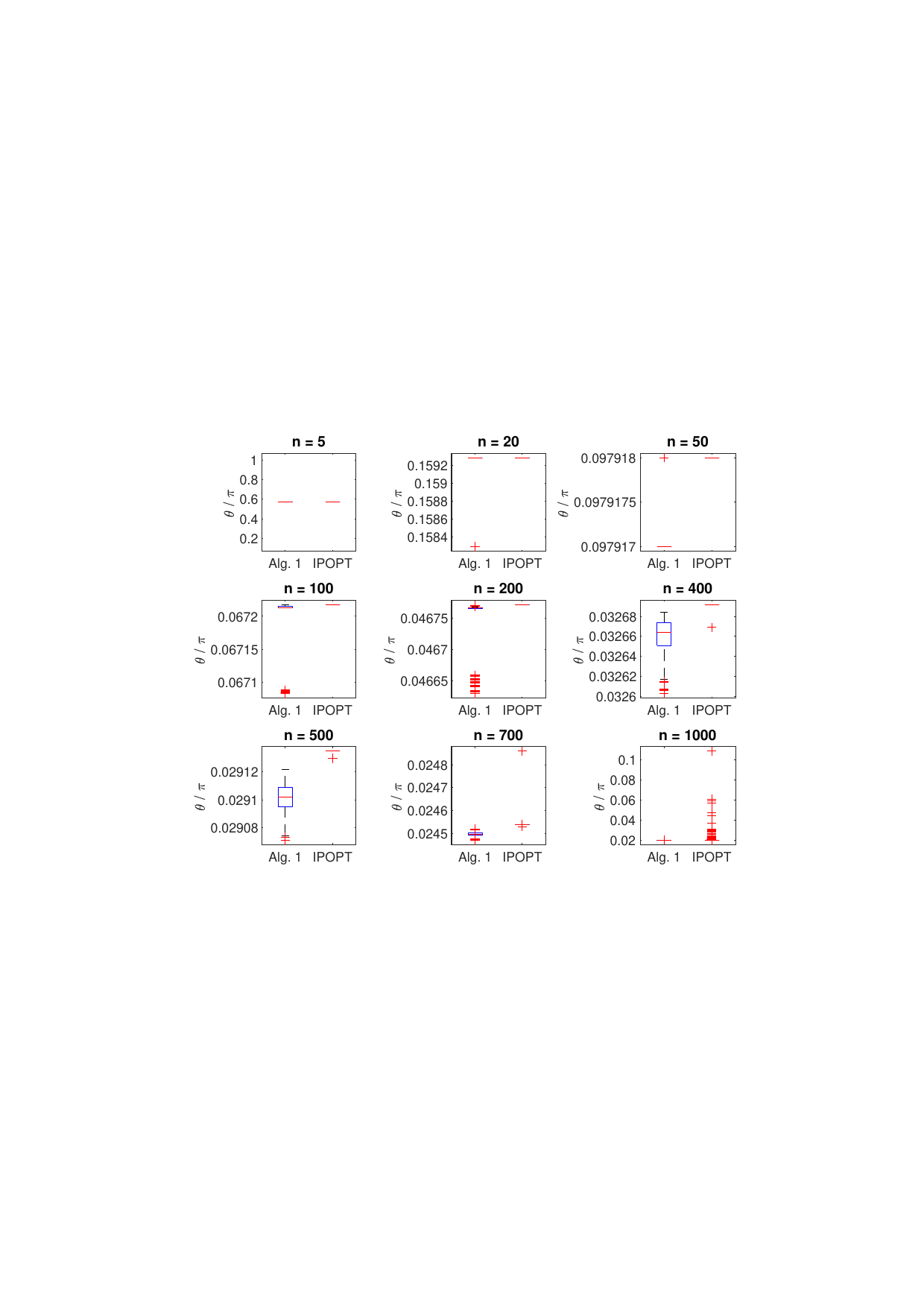}
    \caption{Box-plots of the critical angles between two ellipsoidal cones computed by Algorithm\,\ref{alg:main} and \ipopt~with $10^3$ initial points.}
    \label{fig:elli}
\end{figure}

\subsection{Critical angles between the SDP cone and the cone of nonnegative matrices}
Recall that $\mathcal S^n$ is the space of symmetric matrices of order $n$ equipped with the trace inner product $\langle A,B\rangle={\rm Tr}(AB)$. Furthermore,
$\mathcal P_n$ is the SDP cone, and $\mathcal N_n$ denotes the cone of nonnegative matrices in $\mathcal S^n$. That is,
\begin{align*}
    &\mathcal P_n=\{A\in\mathcal S^n\,:\,x^\top Ax\geq 0,\forall x\in\mathbb R^n\},\\
    &\mathcal N_n=\{B\in\mathcal S^n\,:\,\mbox{$B$ is nonnegative entrywise}\}.
\end{align*}
Let $\Theta(\mathcal P_n,\mathcal N_n)$ denote the maximal angle between $\mathcal P_n$ and $\mathcal N_n$.   The computation of $\Theta(\mathcal P_n,\mathcal N_n)$  was exhaustively treated in \cite{GSM} and \cite{SeSo2}. It is known that 
$$\Theta(\mathcal P_n,\mathcal N_n)=(3/4)\pi,\mbox{ for }n=2,3,4,\quad\mbox{and}\quad \lim_{n\to\infty}\Theta(\mathcal P_n,\mathcal N_n)=\pi.$$
The exact computation of $\Theta(\mathcal P_n,\mathcal N_n)$ for $n\geq 5$ becomes difficult, and only lower bounds for certain values of $n$ are known. As it is discussed in \cite{HS}, a motivation to compute $\Theta(\mathcal P_n,\mathcal N_n)$ is that it provides  a lower bound for the greatest possible angle between two matrices in the copositive cone $\mathcal C_n:=\{A\in\mathcal S^n\,:\,x^\top Ax\geq 0,\forall x\in\mathbb R^n_+\}$, cf. \cite{GSM}. Our goal in this section is to use Algorithm\,\ref{alg:main} to improve the knowledge of these lower bounds.

It is clear that $\mathcal P_n$ is a LISC cone since it is the symmetric cone of the algebra $\mathcal S^n$. In this case, its representation is 
$$\mathcal P_n=G(\mathcal P_n)$$
with $G:\mathcal S^n\to\mathcal S^n$ the identity map.

The cone $\mathcal N_n$ is also LISC. Indeed, it can be expressed as 
$$\mathcal N_n=H(\mathbb R^N_+),$$
where $N=n(n+1)/2$ and $H:\mathbb R^N\to\mathcal S^n$ is the linear map constructed as follows: Let $\mathcal E:=\{e_1,\ldots,e_N\}$ be the canonical basis of $\mathbb R^N$ and let $\{E_{i,j}\}_{i\leq i\leq j}$ be the canonical basis of $\mathcal S^n$. That is, $E_{i,j}\in\mathcal S^n$ is the matrix whose entries are zeros except the entries corresponding to $(i,j)$ and $(j,i)$ in which take values equal to one. It is clear that $H$ is completely defined by knowing the image of each element of its basis $\mathcal E$. In this way, we take the following assignment: for $k=1,\ldots,N$,
$$H(e_k)=E_{i_k,j_k},$$
where $i_k:=2k-\ell_k(\ell_k-1)/2$ and $j_k:=\mathop{\rm argmin}\limits_{1\leq\ell\leq n}\{\ell(\ell+1)/2:\ell(\ell+1)/2\geq k\}.$ Hence, for $y\in\mathbb R^N$, the value of $H(y)$ is nothing but
$$H(y)=\sum_{k=1}^N y_k E_{i_k,j_k}.$$
The mapping $H$ is invertible since it maps the canonical basis of $\mathbb R^N$ onto the canonical basis of $\mathcal S^n$. Hence, assumptions \ref{assump1} and \ref{assump2} are satisfied and $\mathcal N_n$ is a LISC cone.

In the implementation of Algorithm\,\ref{alg:main}, we also need to know the map $H^T:\mathcal S^n\to\mathbb R^N$ (the adjoint map of $H$). It is not difficult to see that it is given by
$$H^T(X)=(\langle X,E_{i_1,j_1}\rangle ,\ldots, \langle X,E_{i_N,j_N}\rangle)^T.$$

In practice, the numerical evaluation of $H(y)$ and $H^T(X)$  is easy. To illustrate it, consider $n=3$. Then $N=6$ and we have:
$$H\begin{pmatrix}
    y_1\\y_2\\y_3\\y_4\\y_5\\y_6
\end{pmatrix}=\begin{pmatrix}
    y_1&y_2&y_4\\y_2&y_3&y_5\\y_4&y_5&y_6
\end{pmatrix}\quad\mbox{and}\quad H^T\begin{pmatrix}
    x_{11}&x_{12}&x_{13}\\ x_{12}&x_{22}&x_{23}\\x_{13}&x_{23}&x_{33}
\end{pmatrix}=\begin{pmatrix}
    x_{11}\\2x_{12}\\x_{22}\\2x_{13}\\2x_{23}\\x_{33}
\end{pmatrix}.$$

Now, we are ready to implement Algorithm\,\ref{alg:main} to estimate $\Theta(\mathcal P_n,\mathcal N_n)$.  Algorithm\,\ref{alg:main} generates the points $(X^k,y^k)$. Recall that $X^k$ is a matrix in $\mathcal S^n$ and $y^k$ is a vector in $\mathbb R^N$. In Step $2$ of Algorithm~\ref{alg:main}, for $\mu_1>0$, problem \eqref{lin1} becomes
 \begin{equation*}\label{step2SDP}
\begin{array}{ll}
\min&\displaystyle \frac{1}{2}\Vert X-B^k\Vert^2\\
\mbox{s.t.}&X\in\mathcal P_n,\\
&{\rm Tr}(X)=1,
\end{array}
\end{equation*} 
where $B^k:=X^k-(1/\mu_1)(Hy^k-\delta_k\Vert X^k\Vert^{-1}\Vert Hy^k\Vert X^k)$. If $B^k$ has the spectral decomposition, the solution to the above problem is $X^k=Q{\rm Diag}(\lambda(B^k)Q^T$, with $Q$ being some orthogonal matrix of order $n$. From Example\,\ref{ex:pyeccion} we have that  $X^k=Q{\rm Diag}(\bar x)Q^T$, where $\bar x\in\mathbb R^n$ is the solution of
\begin{equation*}
\begin{array}{ll}
\min&\displaystyle \frac{1}{2}\Vert x-\lambda(B^k)\Vert^2\\
\mbox{s.t.}&x\in \mathbb R^n_+,\\
&\langle {\bf 1}_n,x\rangle=1.
\end{array}
\end{equation*} 
When $\mu_2>0$, problem \eqref{lin2} becomes
\begin{equation*}
\begin{array}{ll}
\min&\displaystyle \frac{1}{2}\Vert y-b^k\Vert^2\\
\mbox{s.t.}&y\in \mathbb R^N_+,\\
&\langle {\bf 1}_N,y\rangle=1,
\end{array}
\end{equation*}
where $b^k:=y^k-(1/\mu_2)H^T(X^k-\delta_k\Vert  X^k\Vert\Vert Hy^k\Vert^{-1} Hy^k)$.

The tolerances for the algorithm stopping test are $\epsilon_1=10^{-6}$, $\epsilon_2=10^{-6}$ and $\epsilon_3=10^{-7}$.
We consider a maximum number of iterations equal to $5000$, and the value of the prox-parameters are chosen as $\mu_1 = 0.01$ and $\mu_2 = 5$.  For the initial point $(X^0,y^0)$, we set as $X^0=Diag(x^0)$ and, we generated $x^0$ and $y^0$ as two random points in the simplex. The largest dimension for the instances is $60$, because the subproblems to be solved at each iteration are more complex. In particular, the first requires the spectral decomposition of a matrix and  a subsequent projection on the $n$-dimensional standard simplex. The second subproblem also consists in projecting on the standard simplex, but this time the dimension is much higher, that is $N = n(n+1)/2$ (e.g., with $n = 60$, $N=1830$).

In Table~\ref{tab:PN}, we report the best critical angle $\theta_b(\mathcal P_n,\mathcal N_n)$ obtained by using Algorithm~\ref{alg:main} for different values of $n$. In the second column of this table, titled `$\theta_{lb}(\mathcal P_n,\mathcal N_n)$', we additionally show the values computed by using the procedure in \cite{SeSo2} which only guarantees a lower bound for the maximal angle between the two considered cones. We solve the optimization problem given in \cite{SeSo2} by \ipopt~and considering $10^{3}$ random initial points. Then, in the table, we provide the best value computed.
We can see that our algorithm could find a slightly better solution for $n=50$ requiring a very small computational time.  However, the reported values improve significantly what was known in \cite[Table\,2]{SeSo2}.   We do not compare our algorithm with \ipopt~applied to FP~\eqref{maxfrac}~because this solver is not applicable in this more complex setting: the Cones $\mathcal{P}_n$ and $\mathcal{N}_n$ do not have a favorable structure for off-the-shelf NLP solvers.

\begin{table}[]
\begin{tabular}{|l|c|c|c|c|c|c|c|c|}
\hline
n    & \multicolumn{1}{l|}{$\theta_{lb}(\mathcal P_n,\mathcal N_n)$} & $\theta_b(\mathcal P_n,\mathcal N_n)$ & it$_{b}$ & it$_{a}$ & it$_{w}$ & CPU$_b$  & CPU$_a$  & CPU$_w$  \\ \hline
4           & 0.7500  $\pi$      &  0.7500 $\pi$& 18 & 35.96 & 322 & 7.29e-04 & 1.76e-03 & 3.54e-02  \\ 
5           & 0.7575  $\pi$      &  0.7575 $\pi$& 20 & 102.32 & 536 & 8.87e-04 & 5.84e-03 & 3.23e-02  \\ 
6           & 0.7575  $\pi$      &  0.7575 $\pi$& 19 & 125.89 & 479 & 1.10e-03 & 8.35e-03 & 2.78e-02  \\ 
7           & 0.7575  $\pi$      &  0.7575 $\pi$& 22 & 202.35 & 478 & 1.16e-03 & 1.27e-02 & 3.34e-02  \\ 
8           & 0.7608  $\pi$      &  0.7608 $\pi$& 23 & 237.76 & 635 & 1.26e-03 & 1.59e-02 & 4.22e-02  \\ 
9           & 0.7608  $\pi$      &  0.7608 $\pi$& 20 & 243.34 & 530 & 1.27e-03 & 1.61e-02 & 3.95e-02  \\ 
10          & 0.7609  $\pi$      &  0.7609 $\pi$& 25 & 256.06 & 750 & 1.57e-03 & 1.73e-02 & 4.47e-02  \\ 
11          & 0.7627  $\pi$      &  0.7627 $\pi$& 25 & 260.09 & 870 & 1.97e-03 & 1.80e-02 & 5.89e-02  \\ 
12          & 0.7649  $\pi$      &  0.7649 $\pi$& 141 & 273.30 & 903 & 9.53e-03 & 1.90e-02 & 6.14e-02  \\ 
13          & 0.7649  $\pi$      &  0.7649 $\pi$& 130 & 282.42 & 1114 & 9.26e-03 & 2.09e-02 & 7.03e-02  \\ 
14          & 0.7659  $\pi$      &  0.7659 $\pi$& 123 & 304.45 & 927 & 8.73e-03 & 2.27e-02 & 7.51e-02  \\ 
15          & 0.7678  $\pi$      &  0.7678 $\pi$& 137 & 327.89 & 1304 & 1.04e-02 & 2.54e-02 & 9.36e-02  \\ 
16          & 0.7699  $\pi$      &  0.7699 $\pi$& 133 & 345.00 & 1835 & 1.06e-02 & 2.79e-02 & 1.40e-01  \\ 
17          & 0.7699  $\pi$      &  0.7699 $\pi$& 128 & 350.01 & 1958 & 1.17e-02 & 3.09e-02 & 1.60e-01  \\ 
18          & 0.7699  $\pi$      &  0.7699 $\pi$& 122 & 350.11 & 2179 & 1.23e-02 & 3.71e-02 & 1.96e-01  \\ 
19          & 0.7703  $\pi$      &  0.7703 $\pi$& 129 & 355.40 & 1548 & 1.24e-02 & 3.69e-02 & 1.72e-01  \\ 
20          & 0.7719  $\pi$      &  0.7719 $\pi$& 135 & 390.75 & 1741 & 1.37e-02 & 3.96e-02 & 2.91e-01  \\ 
21          & 0.7719  $\pi$      &  0.7719 $\pi$& 130 & 396.87 & 1955 & 1.52e-02 & 4.27e-02 & 1.91e-01  \\ 
22          & 0.7719  $\pi$      &  0.7719 $\pi$& 136 & 420.58 & 1724 & 1.57e-02 & 4.77e-02 & 1.83e-01  \\ 
23          & 0.7722  $\pi$      &  0.7722 $\pi$& 142 & 412.98 & 1593 & 1.73e-02 & 4.99e-02 & 1.76e-01  \\ 
24          & 0.7735  $\pi$      &  0.7735 $\pi$& 150 & 420.40 & 1831 & 1.81e-02 & 5.10e-02 & 2.03e-01  \\ 
25          & 0.7735  $\pi$      &  0.7735 $\pi$& 147 & 425.24 & 1480 & 2.00e-02 & 6.42e-02 & 2.59e-01  \\ 
26          & 0.7735  $\pi$      &  0.7735 $\pi$& 133 & 458.19 & 1454 & 3.25e-02 & 1.29e-01 & 4.12e-01  \\ 
27          & 0.7739  $\pi$      &  0.7739 $\pi$& 156 & 456.77 & 1997 & 3.93e-02 & 1.41e-01 & 5.50e-01  \\ 
28          & 0.7750  $\pi$      &  0.7750 $\pi$& 158 & 461.31 & 1764 & 4.14e-02 & 1.38e-01 & 5.63e-01  \\ 
29          & 0.7750  $\pi$      &  0.7750 $\pi$& 149 & 492.23 & 1704 & 4.21e-02 & 1.52e-01 & 5.08e-01  \\ 
30          & 0.7757  $\pi$      &  0.7757 $\pi$& 148 & 504.14 & 1834 & 4.04e-02 & 1.60e-01 & 6.21e-01  \\ 
40 & 0.7789 $\pi$ & 0.7789 $\pi$ & 275 & 787.20 & 2197 & 1.27e-01 & 4.29e-01 & 1.32e+00  \\ 
50          &     0.7809 $\pi$             &  0.7812 $\pi$& 356 & 1193.38 & 3108 & 2.50e-01 & 9.71e-01 & 2.65e+00  \\ 
60          & 0.7837  $\pi$                & 0.7837 $\pi$& 507 & 1553.64 & 4189 & 5.57e-01 & 1.78e+00 & 6.93e+00 \\
\hline
\end{tabular}
 \caption{Performance of Algorithm~\ref{alg:main} for computing a critical angle between the SDP cone and the cone of nonegative matrices.}\label{tab:PN}
\end{table}

\section{Conclusions}\label{sec:conc}

In this work, we have investigated the problem of computing critical angles between pairs of convex cones in finite-dimensional Euclidean spaces. Our contributions are summarized as follows:
a) we have shown that the notion of critical angles considered in the literature is equivalent to that of (KKT) stationarity for a class of nonlinear optimization problems;
b) we have posed the problem in the fractional programming (FP) setting and proposed a specialized algorithm to compute stationary points, yielding thus critical angles to our original problem;
c) we have further specialized our analysis to the broad class of (LISC) convex cones derived as linear images of symmetric cones. Moreover, we have demonstrated that our FP formulation is well-defined and suitable for numerical optimization.

Our algorithm is the first numerical procedure for computing critical angles between pairs of convex cones. The approach requires solving two independent subproblems at every iteration to compute (with the aid of the Armijo line search) a new iterate. For the class of LISC cones, we have shown that solutions to such subproblems are projections onto well-structured convex sets. Otherwise, for more general cones, we can still apply off-the-shelf (linear, quadratic, conic, etc.) convex solvers to solve such subproblems. Our theoretical analysis demonstrates that our approach asymptotically computes critical angles regardless of initialization.

Numerical experiments on different types of convex cones (polyhedral, ellipsoidal, and Loewnerian cones) in several dimensions illustrate the practical performance of our algorithm. Comparison with \ipopt~have shown that exploiting the problem's structure as done by our approach pays off: for the considered test problems, our algorithm is up to one hundred times faster than \ipopt~while maintaining (sometimes even improving) the quality of computed critical angles.

\section*{Data availability}
All data generated or analysed during this study are included in this article.

\end{document}